\patchcmd{\subsection}{-.5em}{.5em}{}{}
\patchcmd{\subsubsection}{-.5em}{.5em}{}{}
\numberwithin{equation}{section}
\let\epsilon\varepsilon
\renewcommand\emptyset\varnothing
\let\phi\varphi
\newcommand{\SL}{\operatorname{SL}}
\newcommand{\Aut}{\operatorname{Aut}}
\newcommand{\I}{\mathrm{I}}
\newcommand{\II}{\mathrm{II}}
\newcommand{\III}{\mathrm{III}}
\newcommand{\mfg}{[\cR]}
\newcommand{\ns}{\Aut(X,[\mu])}
\newcommand{\malg}{\mathrm{MAlg}}
\newcommand{\cB}{\mathcal{B}}
\newcommand{\cO}{\mathcal{O}}
\newcommand{\cP}{\mathcal{P}}
\newcommand{\cR}{\mathcal{R}}
\newcommand{\cU}{\mathcal{U}}
\newcommand{\cV}{\mathcal{V}}
\newcommand{\cZ}{\mathcal{Z}}
\newcommand{\bC}{\mathbb{C}}
\newcommand{\bN}{\mathbb{N}}
\newcommand{\bQ}{\mathbb{Q}}
\newcommand{\bR}{\mathbb{R}}
\newcommand{\bZ}{\mathbb{Z}}
\newcommand{\qand}{\quad \textrm{and} \quad}
\newcommand\subsetsim{\mathrel{%
\ooalign{\raise0.2ex\hbox{$\subset$}\cr\hidewidth\raise-0.8ex\hbox{\scalebox{0.9}{$\sim$}}\hidewidth\cr}}}
\newcommand{\eps}{\varepsilon}
\DeclareMathOperator{\Hom}{Hom}
\DeclareMathOperator{\supp}{supp}
\DeclareMathOperator{\Sym}{Sym}
\DeclareMathOperator{\Stab}{Stab}
\DeclareMathOperator{\Env}{Env}
\DeclareMathOperator{\Miss}{Miss}
\DeclareMathOperator{\Nbh}{U}
\DeclareMathOperator{\Fix}{Fix}
\DeclareMathOperator{\dom}{dom}
\DeclareMathOperator{\ran}{ran}
\DeclareMathOperator{\Z}{\bZ}
\DeclareMathOperator{\R}{\bR}
\newcommand{\Boom}{\mathrm{Boom}}
\newcommand{\Sub}{\mathrm{Sub}}
\newcommand\abs[1]{\left|#1\right|}
\renewcommand{\epsilon}{\varepsilon}
\definecolor{lichtgrijs}{gray}{0.95}
\theoremstyle{theorem}
\newtheorem{theorem}{Theorem}
\newtheorem*{theorem*}{Theorem}
\newtheorem*{corollary*}{Corollary}
\newtheorem*{proposition*}{Proposition}
\newtheorem*{lemma*}{Lemma}
\theoremstyle{definition}
\newtheorem*{definition*}{Definition}
\newtheorem*{remark*}{Remark}
\newtheorem*{construction*}{Construction}
\newtheorem{no}[subsubsection]{\S}
\newtheorem*{example*}{Example}
\newtheorem{question}[theorem]{Question}
\newtheorem*{question*}{Question}
\DeclareMathSymbol{\shortminus}{\mathbin}{AMSa}{"39}
\begin{document}

\title[Elementwise conservative actions]{Elementwise conservative actions and new constructions of boomerang subgroups}
\author{Yair Glasner}
\address{Yair Glasner, Department of Mathematics,
Ben-Gurion University of the Negev
Be'er Sheva, 84105, Israel,
{\tt yairgl@bgu.ac.il}
}

\author{Tobias Hartnick}
\address{Tobias Hartnick, Institut f\"ur Algebra und Geometrie, KIT, Englerstra{\ss}e 2, 76131 Karlsruhe, Germany,
{\tt tobias.hartnick@kit.edu}
}
\author{Waltraud Lederle} 
\address{Waltraud Lederle, Faculty of Mathematics, Bielefeld University, Universit\"atsstra{\ss}e 25, 33501 Bielefeld, Germany,
{\tt waltraud.lederle\@@uni-bielefeld.de}
}

\begin{abstract}
    We show that countable non-abelian free groups admit uncountably many mutually singular elementwise conservative non-singular random subgroups, which are supported on infinite subgroups of infinite index and singular with respect to every invariant random subgroup. This complements recent rigidity results for elementwise-conservative random subgroups in higher rank lattices by the first- and third-named authors. Our proof is based on a study of representations of free groups into measurable full groups in which the action of the first generator of the free group is fixed. We show that elementwise conservativity is generic among such representations in the sense of Baire category.
\end{abstract}
\maketitle
\tableofcontents

\section{Introduction}

\subsection{Dynamical rigidity and flexibility}

One of the main themes of modern group theory is the dichotomy between \emph{flexibility} and \emph{rigidity}. For example, lattices in $\mathrm{SL}_2(\R)$ admit a rich space of deformations (closely related to Teichm\"uller space) whereas lattices in $\mathrm{SL}_2(\bC)$ or $\mathrm{SL}_3(\R)$ do not admit any non-trivial deformations at all - this is Mostow's celebrated rigidity theorem. Lattices in higher rank simple Lie groups (such as $\mathrm{SL}_3(\R)$) satisfy a whole plethora of rigidity properties, from Kazhdan's Property (T) and various strengthenings thereof to Margulis' superrigidity and its generalizations.

It was observed at an early stage that rigidity properties of higher rank lattices $\Lambda$ are closely connected to \emph{dynamical rigidity properties} of the action of the ambient Lie group $G$ on the homogeneous space $G/\Lambda$, which preserves an invariant probability measure. This lead to the study of rigidity properties of general \emph{probability measure preserving (p.m.p.) actions} of higher rank Lie groups. Notably, Nevo, Stuck and Zimmer \cite{SZ:94},\cite{NZ:IFT} established that every ergodic p.m.p. action of a center-free simple higher rank Lie group $G$ is either essentially free or transitive. 

The Nevo--Stuck--Zimmer theorem can be stated equivalently in the language of conjugation-invariant probability measures on the Chabauty space $\mathrm{Sub}(G)$ of closed subgroups of $G$, a.k.a.\ \emph{invariant random subgroups} (IRSs). Indeed, since every p.m.p. action $G \curvearrowright (X, \mu)$ gives rise to an IRS by pushing $\mu$ forward via the \emph{stabilizer map} 
\[ \mathrm{Stab}\colon X \to \mathrm{Sub(G)}, \quad x \mapsto \mathrm{Stab}_G(x)\] 
and since every IRS arises from a p.m.p.\ action in this way \cite{AbertGlasnerVirag2014,7sam}, the Nevo--Stuck--Zimmer theorem says that every ergodic IRS of a center-free simple higher rank Lie group is either trivial or supported on the conjugation orbit of a lattice. In particular, a generic point of such an IRS is either trivial or a lattice; similar rigidity results also hold for IRSs of higher rank lattices. 

On the contrary, rank one Lie groups (and, more generally, hyperbolic locally compact groups) and their lattices admit plenty of non-trivial IRSs \cite{7sam_2} - in this sense they are \emph{dynamically flexible}. This flexibility already shows in the easiest non-trivial example, namely non-abelian finitely generated free groups. 

In recent years it has become more and more apparent that dynamical rigidity results in higher rank concern not only p.m.p. actions but also other classes of non-singular actions. Notably, motivated by applications to Margulis' unbounded injectivity radius conjecture, Gelander and Fraczyk \cite{GelanderFraczyk} initiated a study of \emph{stationary random subgroups} (SRSs), i.e. stationary probability measures on Chabauty spaces, and established a version of the Nevo--Stuck--Zimmer theorem in this context. 

Similarly, the work of the first and the third author in \cite{GlasnerLederle2025} implies a version of the Nevo--Stuck--Zimmer theorem for a different class of non-singular random subgroups which we call \emph{elementwise conservative random subgroups} (ECRSs). The present article, which can be seen as a companion article to this work, is devoted to an instance of \emph{dynamical flexibility of ECRSs}. Namely, we are going to show that a countable non-abelian free group admits plenty of ECRSs which are singular with respect to every IRS.

\subsection{ECRSs and boomerang subgroups}

In order to state our first main result, we now introduce the class of non-singular systems we will be working with; see Subsection \ref{SubsecNonSing} below for general background and terminology concerning non-singular transformations. Given a discrete group $\Lambda$, we denote by $\mathrm{Sub}(\Lambda)$ the space of subgroups of $\Lambda$, which is a compact $\Lambda$-space when equipped with the action by conjugation and the topology as a closed subset of $\{0,1\}^{\Lambda}$. Recall that a non-singular transformation $T$ is called \emph{conservative} if every $T$-wandering set is a nullset (see Subsection \ref{SubsecConservative} for various equivalent characterizations).
\begin{definition*} 
Let $\Lambda$ be a countable discrete group.
\begin{enumerate}[(i)]
 \item A non-singular action of $\Lambda$ on a standard probability space $(X, \mu)$ is called \emph{elementwise conservative} (or an \emph{e.c.\ action} for short) if every $\gamma \in \Lambda$ acts by a conservative transformation.
 \item A probability measure $\mu$ on $\mathrm{Sub}(\Lambda)$ is called an \emph{e.c.\ random subgroup (ECRS)} if the conjugation action of $\Lambda$ on $(\mathrm{Sub}(\Lambda), \mu)$ is non-singular and e.c..
 \item A subgroup $\Delta \in \mathrm{Sub}(\Lambda)$ is called a \emph{boomerang subgroup} if for every $\gamma \in \Lambda$ there exists a sequence $n_k \to \infty$ with $\gamma^{n_k} \Delta \gamma^{-n_k} \to \Delta$ in $\mathrm{Sub}(\Lambda)$.
 \end{enumerate}
\end{definition*} 
 It follows from the Poincar\'e recurrence theorem that every p.m.p. action is e.c., and hence every IRS is an ECRS. Moreover, almost every instance of an ECRS (and hence of an IRS) is a boomerang subgroup. Schematically,
\[\begin{xy}\xymatrix{
\text{p.m.p. action} \ar@{=>}[d] \ar[rrr]^{\mathrm{stabilizer}} &&&\text{IRS} \ar@{=>}[d] \\
\text{e.c.\ action} \ar[rrr]^{\text{stabilizer}} &&& \text{ECRS} \ar[rrr]^{\text{almost surely}\hphantom{ppppppppp}} &&& \text{boomerang subgroup.}
}
\end{xy}
\]
Note that while IRSs and ECRSs are concepts from non-singular dynamics, boomerang subgroups belong to the realm of topological dynamics.
\begin{remark*} \label{rem:golb_conservativity} Unlike ergodicity or stationarity, the e.c.\ property is an \emph{elementwise} property of an action. Asking for the action to be \emph{globally} conservative (in the sense that for every positive measure set $A$ there is some $1 \ne \gamma \in \Lambda$ such that $\mu(A \cap \gamma(A)) > 0$) is a much weaker notion. In fact, as shown in Lemma \ref{lem:whole_gp}, \emph{every} non-singular random subgroup is globally conservative, whereas elementwise conservativity is much more restrictive.

On first sight, an elementary conservative action is just a countable collection of conservative $\Z$-actions, one for every element of the group. However, the fact that these different actions combine into an action of some given group $\Lambda$ often puts severe constraints on possible e.c.\ actions of $\Lambda$ as illustrated by the following rigidity result from \cite{GlasnerLederle2025}.
\end{remark*} 
\begin{theorem*}[{\cite[Corollary 1.3]{GlasnerLederle2025}}] 
Let $\Lambda$ be a lattice in a center-free simple Lie group $G$, and assume that $\operatorname{rk}_{\bQ}(\Lambda) \ge 2$. Then, every nontrivial ergodic ECRS of $\Lambda$ is supported on finite index subgroups. In fact, every nontrivial boomerang subgroup of $\Lambda$ is of finite index.
\end{theorem*}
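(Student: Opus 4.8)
The plan is to reduce both assertions to the single claim that a boomerang subgroup $\Delta\ne\trivgp$ of $\Lambda$ has finite index, and then to prove that claim by separating off an easy case (nontrivial normal core) via Margulis' Normal Subgroup Theorem and attacking the core-free case through rigidity of stationary random subgroups. For the reduction: by the implication recorded in the introduction, almost every instance of an ECRS is a boomerang subgroup, and if $\mu$ is a nontrivial ergodic ECRS then ergodicity together with the fact that $\{\trivgp\}$ is a conjugation-fixed point forces $\mu(\{\trivgp\})=0$, so $\mu$-a.e.\ $\Delta$ is a \emph{nontrivial} boomerang subgroup; granting the claim, $\mu$ is then supported on finite-index subgroups, and since a finitely generated group (such as a higher-rank lattice) has only finitely many subgroups of each index, ergodicity further concentrates $\mu$ on a single finite conjugacy class. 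So suppose, for contradiction, that $\Delta\ne\trivgp$ is a boomerang subgroup of infinite index.

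First I would dispose of the possibility that the normal core $N=\bigcap_{\gamma\in\Lambda}\gamma\Delta\gamma^{-1}$ is nontrivial: then $N$ is a nontrivial normal subgroup of the higher-rank lattice $\Lambda$, so Margulis' Normal Subgroup Theorem gives $[\Lambda:N]<\infty$, contradicting $[\Lambda:\Delta]=\infty$. Hence $\Delta$ is core-free. I would also record a purely topological fact: no Chabauty limit of infinite-index subgroups can be of finite index. Indeed, if $\Delta_i\to\Delta'$ with $[\Lambda:\Delta']<\infty$, then the normal core $K$ of $\Delta'$ is finitely generated (being of finite index in the finitely generated group $\Lambda$) and contained in $\Delta'$, so eventually the finitely many generators of $K$, and hence $K$ itself, lie in $\Delta_i$, forcing $[\Lambda:\Delta_i]\le[\Lambda:K]<\infty$. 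Since every conjugate of $\Delta$ has infinite index, it follows that the orbit closure $Y:=\overline{\Lambda\cdot\Delta}\subseteq\mathrm{Sub}(\Lambda)$ contains no finite-index subgroup.

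The heart of the argument is then to produce on the compact $\Lambda$-flow $Y$ a $\Lambda$-stationary probability measure $\nu$ with $\nu(\{\trivgp\})<1$. Its ergodic decomposition contains an ergodic stationary component $\nu'$ with $\nu'(\{\trivgp\})=0$ --- a nontrivial stationary random subgroup of $\Lambda$ --- and the stationary version of the Nevo--Stuck--Zimmer theorem (Fraczyk--Gelander \cite{GelanderFraczyk}, after the standard induction from $\Lambda$ to $G$, or the classical Nevo--Stuck--Zimmer theorem \cite{SZ:94,NZ:IFT} applied to an invariant measure equivalent to $\nu'$) then forces $\nu'$-a.e.\ point of $Y$ to be of finite index in $\Lambda$, contradicting the previous paragraph and completing the proof. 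When $\trivgp\notin Y$ one may simply take $\nu$ to be any $\kappa$-stationary measure for a generating measure $\kappa$ on $\Lambda$, so that $\nu(\{\trivgp\})=0$ automatically; this sub-case is essentially the known rigidity of confined subgroups of higher-rank lattices.

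The main obstacle, as expected, is ruling out escape of mass to $\trivgp$ when $\trivgp\in Y$. The boomerang property gives, for every $\gamma\in\Lambda$, that $\Delta$ is a recurrent point of the action $\langle\gamma\rangle\acts Y$; but recurrence of a single point does not by itself furnish an invariant measure charging a neighborhood of that point (the orbit may return with vanishing frequency), so averaging along one cyclic orbit is insufficient. The resolution should combine the boomerang returns $\gamma^{n_k}\Delta\gamma^{-n_k}\to\Delta$ for \emph{all} $\gamma$ simultaneously with Property~(T) of $\Lambda$: were $\delta_{\trivgp}$ the unique $\Lambda$-stationary measure on $Y$, the random walk on $\Lambda$ would drive every subgroup toward $\trivgp$ in a manner incompatible with the deterministic, group-wide recurrence dictated by the boomerang condition. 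I expect this recurrence-versus-contraction dichotomy --- or, alternatively, a direct argument exploiting Margulis arithmeticity of $\Lambda$ together with the density of its commensurator in $G$, showing that the conjugates of a fixed $1\ne\delta\in\Delta$ supplied by the boomerang condition already generate a finite-index subgroup of $\Delta$ --- to be the crux of the proof.
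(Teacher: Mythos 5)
This theorem is \emph{cited} by the paper from \cite{GlasnerLederle2025}; the present article does not reprove it, so there is no internal argument to compare yours against, and I will assess your proposal on its own terms. The outer layers of your argument are sound: the reduction of the ECRS clause to the boomerang clause (ergodicity plus the fact that $\{1\}$ is a conjugation-fixed point does force $\mu(\{\{1\}\})=0$), disposing of the case of nontrivial normal core via Margulis' Normal Subgroup Theorem, and the observation that a Chabauty limit of infinite-index subgroups of a finitely generated group cannot have finite index are all correct.

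The genuine gap --- which, to your credit, you flag yourself --- is the step where a $\Lambda$-stationary probability measure not concentrated on $\{1\}$ is to be produced on the orbit closure $Y=\overline{\Lambda\cdot\Delta}$. The boomerang condition is a purely topological, per-element recurrence statement: for each $\gamma\in\Lambda$ it supplies a subsequence $n_k$ along which $\gamma^{n_k}\Delta\gamma^{-n_k}\to\Delta$, but these return times may have density zero and depend on $\gamma$ in an uncontrolled way, so no Krylov--Bogolyubov-type averaging converts them into a measure charging a neighborhood of $\Delta$ rather than collapsing onto $\{1\}$. The orbit closure of a single recurrent point under a non-amenable group can perfectly well carry $\delta_{\{1\}}$ as its unique stationary measure, so mere existence of a stationary measure on $Y$ is not the issue; and neither of your proposed remedies (the ``recurrence-versus-contraction via Property~(T)'' heuristic, or the commensurator sketch) is carried out. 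In fact the cited paper does not pass through stationary random subgroups at all: the argument stays in the topological boomerang framework and works directly with the abundance of conjugates $\gamma^{-n}\delta\gamma^{n}\in\Delta$ that the boomerang returns force for a fixed $1\ne\delta\in\Delta$, pushing these through projective dynamics and Margulis-type rigidity. Your closing alternative is therefore pointed in the right direction, but it is precisely the hard part of the theorem, not a side remark, and as written the proposal does not prove the statement.
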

 This rigidity result is in line with the general philosophy explained above: One expects ECRSs (and hence boomerang subgroups) to be rare in higher rank situations, but abundant in rank one. The goal of the present article is to partially confirm the expectation of dynamical flexibility of ECRSs in rank one by showing:
\begin{theorem}[Existence of exotic ECRSs]\label{ExECRS} Let $\Gamma$ be a countable non-abelian free group. Then $\Gamma$ admits uncountably many mutually singular ergodic ECRSs which are singular with respect to every IRS and supported on core-free, co-amenable  and co-highly transitive subgroups.
\end{theorem}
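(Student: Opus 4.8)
The plan is to produce the required random subgroups as stabiliser push-forwards of non-singular actions of $\Gamma$ built inside the measurable full group of an ergodic hyperfinite equivalence relation that is \emph{not} probability-measure preserving, and to exhibit the actions with all the desired features as a comeager family in a Polish space of representations whose first coordinate is fixed.

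Fix a standard probability space $(X,\mu)$ and, for a parameter $\lambda\in(0,1)$, an ergodic non-singular hyperfinite equivalence relation $\mathcal{R}=\mathcal{R}^{(\lambda)}$ on $(X,\mu)$ of Krieger type $\mathrm{III}_\lambda$; fix once and for all an ergodic aperiodic $T\in[\mathcal{R}]$ generating $\mathcal{R}$, so that every $\mathcal{R}$-class is a single $T$-orbit, canonically identified with $\mathbb{Z}$ via $j\mapsto T^jx$. Writing $n_S$ for the displacement cocycle of $S\in[\mathcal{R}]$ (defined by $Sx=T^{n_S(x)}x$), let $\mathcal{P}$ be a Polish space of homomorphisms $\rho\colon\Gamma\to[\mathcal{R}]$ of the form $\rho(a_1)=T$, with $\rho(a_i)$ for $i\ge 2$ ranging over a suitably chosen Polish set of full group elements whose displacement cocycles are controlled---controlled enough that in the Schreier graph of a.e.\ $\mathcal{R}$-class (a graph on $\mathbb{Z}$ containing the integer line as the $a_1$-subgraph) long intervals become F{\o}lner sets. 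For every $\rho\in\mathcal{P}$ the orbit relation of the non-singular action $\Gamma\curvearrowright(X,\mu)$ is exactly $\mathcal{R}$ (it contains $\mathcal{R}_T=\mathcal{R}$ and is contained in $\mathcal{R}$), so the action is ergodic and of type $\mathrm{III}_\lambda$, and every point stabiliser $\mathrm{Stab}_\rho(x)$ is co-amenable in $\Gamma$. I would then prove that the following conditions cut out a comeager subset $\mathcal{G}\subseteq\mathcal{P}$: (i) $\rho$ is injective; (ii) $\rho$ is elementwise conservative, i.e.\ every $\rho(\gamma)$ is a conservative transformation of $(X,\mu)$; (iii) for $\mu$-a.e.\ $x$ the action of $\Gamma$ on the orbit $\Gamma x\cong\Gamma/\mathrm{Stab}_\rho(x)$ is highly transitive; (iv) $\mathrm{Stab}_\rho\colon X\to\mathrm{Sub}(\Gamma)$ is $\mu$-essentially injective. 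Each of these is a countable intersection of sets one verifies to be $G_\delta$---for (ii) using that conservativity is a $G_\delta$ condition and $\rho\mapsto\rho(\gamma)$ is continuous---so the substance lies in density.

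Granting $\mathcal{G}$ comeager and hence nonempty, pick $\rho\in\mathcal{G}$ and set $\mu_\rho:=(\mathrm{Stab}_\rho)_*\mu\in\mathrm{Prob}(\mathrm{Sub}(\Gamma))$. Since $\mathrm{Stab}_\rho$ is equivariant for the conjugation action and carries $\mu$ to $\mu_\rho$, the conjugation action on $(\mathrm{Sub}(\Gamma),\mu_\rho)$ is a non-singular factor of the ergodic action $\Gamma\curvearrowright(X,\mu)$; it is therefore ergodic and non-singular, and it is elementwise conservative because conservativity passes to equivariant factors while each $\rho(\gamma)$ is conservative by (ii). Thus $\mu_\rho$ is an ergodic ECRS. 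It is supported on core-free subgroups: for $1\ne\gamma$, injectivity gives $\mu(\mathrm{Fix}(\rho(\gamma)))<1$, so by ergodicity a.e.\ orbit meets the complement and $\gamma\notin\mathrm{Core}(\mathrm{Stab}_\rho(x))$ for a.e.\ $x$; intersect over the countably many $\gamma$. It is supported on co-amenable subgroups by the construction of $\mathcal{P}$, and on co-highly-transitive subgroups by (iii). Finally it is singular with respect to every IRS: by (iv), $\mathrm{Stab}_\rho$ is an isomorphism of non-singular $\Gamma$-systems onto $(\mathrm{Sub}(\Gamma),\mu_\rho)$, so the latter has type $\mathrm{III}_\lambda$ and admits no equivalent invariant probability measure; but if $\mu_\rho$ failed to be singular to some IRS then---after passing to an ergodic component, and using that two ergodic quasi-invariant probability measures for one action are either mutually singular or equivalent---$\mu_\rho$ would be equivalent to an invariant probability measure, a contradiction. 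Running the whole construction for every $\lambda\in(0,1)$ gives ergodic ECRSs $\mu^{(\lambda)}$ with the stated support properties, and for $\lambda\ne\lambda'$ the same dichotomy forces $\mu^{(\lambda)}\perp\mu^{(\lambda')}$ (otherwise $\mu^{(\lambda)}\sim\mu^{(\lambda')}$, so the two systems are isomorphic and $\mathrm{III}_\lambda=\mathrm{III}_{\lambda'}$), yielding the required uncountable family of mutually singular examples.

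The heart of the argument---and where I expect essentially all of the difficulty to lie---is the density statements inside $\mathcal{P}$, above all the density of elementwise conservativity (the statement flagged in the abstract). One must show: given $\rho_0\in\mathcal{P}$, a reduced word $\gamma$ that is not a power of $a_1$ (powers of $a_1$ act as $T^{\pm n}$, automatically conservative since $T$ is ergodic), and a neighbourhood of $\rho_0$, there is $\rho$ in that neighbourhood with $\rho(\gamma)$ conservative; the mechanism is to modify a single generator $\rho_0(a_i)$ on a set of small measure so as to force the displacement cocycle of $\rho(\gamma)$ to be balanced along $T$-orbits---making its cocycle sums recurrent, in the spirit of Atkinson's recurrence theorem carried over to the non-singular setting---while the cocycle control built into $\mathcal{P}$ keeps the modification small. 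A parallel surgery (moving finitely many marked points of a typical orbit at bounded cost) gives density of the high-transitivity condition (iii); conditions (i) and (iv), together with the verification that $\mathcal{P}$ genuinely forces co-amenability, are more routine but still require the non-singular ergodic theory to be handled with care. Once these approximation lemmas are in place, the rest is bookkeeping with standard facts about measurable full groups and non-singular ergodic theory.
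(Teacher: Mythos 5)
Your overall strategy matches the paper's: push forward the stabiliser map of a generic $T$-constrained representation of $\Gamma$ into the measurable full group of a type $\III_\lambda$ hyperfinite equivalence relation, vary $\lambda$ to get an uncountable mutually singular family, and use the ``no invariant measure in $[\mu_\rho]$'' feature to get singularity against every IRS. That part of your scaffolding is essentially the proof in the paper.

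But there is a genuine gap at the step you yourself flag as carrying all the difficulty, and the gap is not merely ``details to be filled in.'' You propose to make a word $\gamma$ act conservatively by perturbing the representation so that the displacement cocycle of $\rho(\gamma)$ is ``balanced along $T$-orbits,'' and to conclude recurrence ``in the spirit of Atkinson's recurrence theorem carried over to the non-singular setting.'' Atkinson's theorem is a statement about mean-zero cocycles over an ergodic \emph{probability-measure-preserving} base, and its conclusion fails for general ergodic non-singular (in particular type $\III$) systems; in fact the absence of an invariant probability measure is exactly what makes balanced cocycles capable of transience. There is no non-singular Atkinson theorem to invoke, so ``balanced, hence recurrent'' is not a valid step. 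The paper has to work around precisely this obstruction: instead of a recurrence-of-cocycle-sums argument, it proves (Lemma on zebra permutations) that after a small perturbation every $T$-orbit contains at most $|c_a(w)|$ infinite $\rho(w)$-orbits, and then uses the purely structural conservativity criterion (finitely many infinite orbits per $T$-orbit $\Rightarrow$ conservative), which is insensitive to the Krieger type. Note also that the paper's argument handles words with $c_a(w)\ne 0$ (which genuinely do have infinite orbits) — so it is not even the case that ``balancing'' is the right thing to aim for.

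A second, less central, divergence: you restrict from the outset to a Polish subspace $\mathcal{P}$ of representations with controlled displacement cocycles in order to force co-amenability of every stabiliser. The paper does not restrict the space; it works in all of $\Hom_T(\Gamma,G)$, proves co-amenability on almost every orbit as a \emph{generic} (dense $G_\delta$) property via the same zebra approximation, and — crucially for the paper's treatment of high transitivity in types $\II_\infty$ and $\III_\lambda$ — retains density of $\rho(\Gamma)$ in $G$ as another generic property, from which high transitivity on a.e.\ orbit follows. Your restriction to bounded-displacement elements destroys density, so you must, as you note, give an independent ``surgery'' argument for high transitivity; that can be made to work (the paper does something of this sort in the $\II_1$ case, cf.\ the cited Eisenmann--Glasner argument), but it is extra machinery your proposal does not supply and the paper avoids by not shrinking the ambient space. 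So: same high-level route, but the key density-of-conservativity step is wrong as stated, and the ambient-space restriction is an avoidable detour that creates new obligations you have not discharged.
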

Here, a subgroup $\Delta < \Gamma$ is called \emph{core-free} if $\bigcap_{\gamma \in \Gamma} \gamma \Delta \gamma^{-1} = \{1\}$; it is called \emph{co-amenable} if the left action $\Gamma \curvearrowright \Gamma/\Delta$ is amenable in the sense of Greenleaf, i.e.\ admits $(F,\epsilon)$-almost invariant sets for every finite subset $F \subset \Gamma$ and every $\epsilon >0$. Finally it is called \emph{co-highly transitive} if the action of $\Gamma \curvearrowright \Gamma/\Delta$ is transitive on ordered $k$-tuples of distinct points, for every $k \in \bN$. 

Every core-free subgroup of an infinite group is of infinite index, every co-amenable subgroup of a non-amenable group is infinite, and every co-highly transitive group is maximal. Thus Theorem \ref{ExECRS} produces exotic boomerangs which are infinite, of infinite index and maximal in the ambient free group.

\subsection{Representations into measurable full groups}

We now outline a general strategy towards the proof of Theorem \ref{ExECRS}. Thus let $\Gamma$ be a countable non-abelian free group.  Firstly, in order to construct an ECRS as in the statement of the theorem, it suffices to construct a non-singular action of $\Gamma$ on a standard probability space $(X, \mu)$ with the following additional properties:
\begin{enumerate}[(1)]
\item The action is faithful and e.c., but there is no $\Gamma$-invariant probability measure in the measure class of $\mu$.
\item The stabilizer map $\mathrm{Stab} \colon X \to \mathrm{Sub}(\Gamma)$ is injective (hence a Borel isomorphism onto its image).
\item Almost all stabilizers are core-free, co-amenable and co-highly transitive.
\end{enumerate}
Indeed, in this case the push-forward of $\mu$ under the stablizer map will be an ECRS with the desired properties. Most of our work goes into constructing a non-singular action satisfying (1); showing that these actions also satisfy (2) and (3) is comparatively easy. The construction is based on three main ingredients.

The first ingredient is the technology of measurable full groups (see Subsection \ref{SubsecMFG} and the references therein). We start from a hyperfinite and ergodic countable Borel equivalence relation $\cR$ on 
a standard probability space $(X, \mu)$ of Krieger type $\II_\infty$ or $\III_\lambda$ for some $\lambda \in [0,1]$ (see \ref{KriegerOE}) and consider the associated measurable full group $G$  (see \ref{MFG}). Here the assumptions on the Krieger type are chosen such that $\mu$ is non-atomic and such that there is no $G$-invariant probability measure in the class of $\mu$. Our goal is now to find representations $\rho\colon \Gamma \to G$ with dense image such that $\rho(\gamma)$ is conservative for every $\gamma \in \Gamma$; the corresponding $\Gamma$-actions on $(X, \mu)$ will then satisfy (1). The question now becomes how to construct such representations.

Here is where the second ingredient comes in: Baire category theory. Instead of considering individual representations $\rho\colon \Gamma \to G$ we consider the Polish space $\mathrm{Hom}(\Gamma, G)$ of \emph{all} such representations. We say that a \emph{generic representation} has a Property (P) if the set of representations with this property contains a dense $G_\delta$-subset of $\mathrm{Hom}(\Gamma, G)$. By the Baire category theorem, if $(P_n)_{n \in \bN}$ is a countable set of properties that hold for a generic representation, then a generic representation satisfies all of these properties simultaneously. In particular, there exists a representation satisfying $(P_n)$ for all $n \in \bN$, and this allows for the construction of representations with peculiar properties. Unfortunately it turns out that, for our finitely-generated non-abelian free group, generic representations into measurable full groups have finite orbits; while this does imply that generic representations are e.c., the resulting boomerang subgroups are of finite index and thus rather boring. 

To fix this problem, we use a third ingredient, which is inspired from \cite{Bowen:free_irs,EG:generic_irs}: constraining representations.
We fix a generator $T$ for $\cR$ and a (finite or countably infinite) basis $(a, b_1, b_2, \dots)$ for $\Gamma$. We then say
that a representation $\rho\colon \Gamma \to G$ is \emph{$T$-constrained} if $\rho(a) = T$ (in particular almost all orbits of $a$ are infinite). The closed subspace $\mathrm{Hom}_T(\Gamma, G) \subset \mathrm{Hom}(\Gamma, G)$ of $T$-constrained representations is Polish, hence we can talk about \emph{generic $T$-constrained representations}. Theorem \ref{ExECRS} is now a consequence of the following main result:

\begin{theorem}[Generic constrained representations]\label{GCR1}
Let $\cR$ be a hyperfinite and ergodic countable Borel equivalence relation on a non-atomic standard probability space $(X, \mu)$ with measurable full group $G$. Then for every choice of generator $T$ of $\cR$, a generic $T$-constrained representation $\rho \colon \Gamma \to G$ has the following properties:
\begin{enumerate}
\item \label{itm:nonatomic inj} $\rho$ is faithful. 
\item \label{itm:nonatomic ec} The action of $\Gamma$ via $\rho$ is elementwise conservative.
\item \label{itm:nonatomic afht} For almost every $x \in X$, the action of $\Gamma$ on the equivalence class $[x]$ is amenable (in the sense of Greenleaf), faithful and highly transitive.
\item \label{itm:nonatomic tnf} The stabilizer map $\Stab \colon (X,\mu) \rightarrow (\Sub(\Gamma),\Stab_*(\mu))$ is an isomorphism of $\Gamma$-systems. 
\end{enumerate}
Moreover, if $\cR$ is not of Krieger type $\II_1$, then:
\begin{enumerate}\setcounter{enumi}{4}
\item \label{NoInvMeasure} There is no $\Gamma$-invariant probability measure in  $[\mathrm{Stab}_*\mu]$.
\item \label{itm:nonatomic dense}  $\rho(\Gamma)$ is dense in $G$.
\end{enumerate}
In fact, the constrained representations with these properties form a dense $G_\delta$-set.
\end{theorem}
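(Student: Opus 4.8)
The proof will run through the Baire category theorem. Since $\Gamma$ is free on $(a,b_1,b_2,\dots)$ and $\rho(a)=T$ is prescribed, a $T$-constrained representation is nothing but an arbitrary choice of the elements $\rho(b_i)\in G$, so $\Hom_T(\Gamma,G)$ is homeomorphic to the countable power $\prod_i G$ of the Polish group $G=[\cR]$ (with its uniform topology) and is itself Polish; moreover $\rho\mapsto\rho(\gamma)$ is continuous for each $\gamma\in\Gamma$, $G$ acts continuously on the measure algebra $\malg(X,\mu)$, and $(S,S')\mapsto\mu(\{x:Sx=S'x\})$ is continuous on $G\times G$. Two of the assertions hold for \emph{every} $T$-constrained $\rho$ and I would dispose of them first. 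Since $T$ generates the ergodic relation $\cR$ on the non-atomic space $(X,\mu)$, the transformation $T$ is ergodic, hence conservative (an ergodic, totally dissipative transformation would force the space to be atomic), and so are all of its powers; since conservativity is a conjugacy invariant, $\rho(\gamma)$ is automatically conservative and nontrivial whenever $\gamma$ is conjugate in $\Gamma$ to a nonzero power of $a$. And a $\rho(\Gamma)$-invariant probability measure in $[\mu]$ would in particular be $T$-invariant, hence $\cR$-invariant, forcing $\cR$ to be of Krieger type $\II_1$; this proves \eqref{NoInvMeasure} (and, once \eqref{itm:nonatomic tnf} is available, the corresponding statement on $\Sub(\Gamma)$).

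Each remaining assertion decomposes into countably many ``test conditions'', and the plan is to show that each defines an \emph{open dense} subset of $\Hom_T(\Gamma,G)$; intersecting and applying Baire then finishes the proof. For \eqref{itm:nonatomic inj}, openness of $\{\rho:\rho(\gamma)\ne\id\}$ is clear and density is a one-generator perturbation (the power-of-$a$ case being already handled); moreover, since $\{x:[x]\subseteq\Fix(\rho(\gamma))\}$ is $\cR$-saturated, ergodicity makes it null or conull, so $\rho$ is faithful if and only if the $\Gamma$-action on a.e.\ orbit is faithful, if and only if a.e.\ stabilizer is core-free --- the same dense $G_\delta$. For \eqref{itm:nonatomic ec}, Halmos' recurrence theorem together with a countable generating algebra $\cA$ gives, for fixed $\gamma$,
\[
\{\rho:\rho(\gamma)\text{ conservative}\}=\bigcap_{A\in\cA,\,\mu(A)>0}\ \bigcap_{\varepsilon\in\bQ_{>0}}\ \bigcup_{N\ge 1}\Big\{\rho:\mu\big(A\cap{\textstyle\bigcup_{n=1}^{N}}\rho(\gamma)^{-n}(A)\big)>\mu(A)-\varepsilon\Big\},
\]
which is $G_\delta$ by the continuity statements above. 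For the amenability half of \eqref{itm:nonatomic afht} the useful reformulation is: for every height $h$ and every finite set $b_{i_1},\dots,b_{i_k}$ of free generators, the set $B_h(\rho)=\{y:\rho(b_{i_j})(T^\ell y)=T^\ell y\text{ for all }j\text{ and all }0\le\ell\le h\}$ has positive measure --- this is the open condition $\{\mu(B_h)>0\}$ for a continuously varying $B_h$, and if it holds for all $h$ and all finite generator sets, ergodicity makes a.e.\ orbit contain arbitrarily long $\langle a\rangle$-intervals invariant under any prescribed finite set of generators, which are the desired $(F,\varepsilon)$-F{\o}lner sets for the $\Gamma$-action on that orbit. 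The high-transitivity half of \eqref{itm:nonatomic afht} reduces similarly, via joint continuity of $(S,S')\mapsto\mu(\{Sx=S'x\})$, to a countable family of ``prescribed finite rearrangement'' conditions, each open.

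Density of these open sets is where the real work lies, and the common tool is the Rokhlin lemma for the aperiodic transformation $T$: given $\rho_0$ and a basic neighborhood (controlling $\rho(b_i)$ within uniform distance $\delta$ for $i$ in a finite set), one takes a Rokhlin tower for $T$ of large height with a base of very small measure, modifies the relevant generator(s) only inside this small-measure subtower --- correcting them off it by passing to the induced transformation of $\rho_0(b_i)$, so as to stay inside $G$ --- and then exploits words that shuttle up and down the tower via $\rho(a)=T$. This immediately yields density for the amenability condition (make the chosen generators equal to $\id$ on a height-$h$ subtower), density for \eqref{itm:nonatomic dense} in finite rank (long shuttling words approximate any prescribed element of $[\cR]$; in infinite rank it is trivial, since an unconstrained generator can be sent near any member of a countable dense subset of $G$), and density for the high-transitivity conditions (make one generator act as a transposition of two adjacent levels of a small subtower, so that together with the $\langle a\rangle$-shifts one realizes enough permutations of a.e.\ orbit). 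Finally \eqref{itm:nonatomic tnf} follows from \eqref{itm:nonatomic inj} and \eqref{itm:nonatomic afht}: a core-free, maximal, nontrivial subgroup of $\Gamma$ is self-normalizing, so on the dense $G_\delta$ where the earlier items hold $\Stab$ is injective on every orbit, and a short further argument (using ergodicity, and the fact that generically distinct orbits carry distinct $\Gamma$-conjugacy classes of stabilizers) upgrades this to a.e.\ injectivity of the Borel, $\Gamma$-equivariant map $\Stab$, making it an isomorphism of $\Gamma$-systems onto its image.

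The main obstacle is the density half of \eqref{itm:nonatomic ec}. When some free generator $b$ occurs exactly once in the cyclically reduced form of $\gamma$, writing $\gamma=u\,b^{\pm1}\,v$ with $u,v$ not involving $b$ lets one factor $\rho(\gamma)$ as a fixed element of $G$ times a conjugate of the (small) perturbation of $\rho_0(b)$, so the required density reduces to the statement that conservative transformations are dense in $[\cR]$ --- itself an elementary surgery cutting the bi-infinite orbits of the dissipative part into finite cycles at a set of arbitrarily small measure. The genuinely hard case is when \emph{every} generator occurring in the cyclically reduced form of $\gamma$ occurs at least twice (for instance $\gamma$ a commutator): a perturbation of a single generator then enters $\rho(\gamma)$ at several positions, and one must track the combined dynamics of the whole word along a Rokhlin tower for $T$, designing the modification on a small subtower so that $\rho(\gamma)$ reflects orbits at both ends of the relevant region and thereby becomes almost conservative. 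Carrying out this word-level bookkeeping in enough generality to obtain \emph{density} --- not merely nonemptiness --- of the good set is the technical heart of the argument; the remaining density statements are variants of the same tower technique and are comparatively routine.
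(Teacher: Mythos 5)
Your overall strategy---Baire category, splitting each assertion into a countable family of open conditions, using Rokhlin-type surgery to establish density---is the right framework and matches the paper's architecture. The disposal of items \eqref{NoInvMeasure} and the $G_\delta$ characterizations of \eqref{itm:nonatomic inj}, \eqref{itm:nonatomic ec}, \eqref{itm:nonatomic afht} are essentially correct, and the reduction of a.e.-injectivity of $\Stab$ on each single orbit to self-normalization of co-highly-transitive core-free stabilizers is a sound observation. However, there are two places where what you wrote is a statement of what needs to be proved rather than a proof, and one place where your sketch, if it worked, would settle a problem the paper explicitly records as open.

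The central gap is the density of $\Omega_\gamma = \{\rho : \rho(\gamma)\ \text{conservative}\}$ when every free generator appears at least twice in the cyclically reduced form of $\gamma$. You correctly identify this as the ``technical heart,'' but ``designing the modification on a small subtower so that $\rho(\gamma)$ reflects orbits at both ends of the relevant region'' is not an argument: modifying one generator $b_i$ changes $\rho(\gamma)$ in several positions at once, and you do not explain how the modified contributions combine to make every $\rho(\gamma)$-orbit bounded or to bound the number of unbounded orbits. The paper resolves this with a genuinely combinatorial idea you do not have: replace all the relevant generators \emph{simultaneously} by the induced (first-return) maps to the complement of $A \cup TA \cup \cdots \cup T^M A$ for a tiny set $A$, so that with respect to the action cocycle each $\alpha_x(\rho'(b_i))$ is a ``zebra permutation'' (fixes long ``black stripes'' pointwise and permutes the ``white stripes'' preserving them), and the action radius of each $\alpha_x(\rho'(b_i))$ is uniformly bounded by a number $m$ much smaller than the strip width $M$. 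Then a deterministic lemma about words in zebra permutations and the shift (Lemma \ref{Combinatoric}) shows that every $T$-orbit contains \emph{exactly} $|c_a(\gamma)|$ infinite $\rho'(\gamma)$-orbits, where $c_a$ is the $a$-exponent sum; conservativity then follows from the criterion that a full-group element with uniformly finitely many infinite orbits per $T$-orbit is conservative (Proposition \ref{Conservativity}). Nothing in your sketch produces this uniform control of infinite orbits, and without it the argument does not close.

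The second gap is in your treatment of density \eqref{itm:nonatomic dense}. You write that in finite rank ``long shuttling words approximate any prescribed element of $[\cR]$'' via Rokhlin towers. As stated this argument has no dependence on the Krieger type, so if it were correct it would also prove density of generic constrained representations in type $\II_1$ --- which the paper records as an open question (Remark \ref{DenseGeneral}, citing Le Ma\^itre). The obstruction is that in the non-p.m.p. setting you cannot ``shuttle'' freely up a Rokhlin tower without losing control of measures: $T$-translates of a small-measure base can have large measure. The paper circumvents this via Krengel's theorem that the measure algebra of a non-$\II_1$ ergodic system has a dense $G_\delta$ of $T$-transitive points; it picks such a transitive $A$ of measure close to $1$, uses transitivity to find translates $T^{n_1}A$, $T^{n_2}A$, $T^{n_2+n_3}A$ of controlled measures, and builds the approximating generator as a partial map $\tau$ patched from $\rho(b_1)$ and the target $g_i$, invoking an extension lemma for partial automorphisms. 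Your Rokhlin sketch omits all of this and would need to be replaced by something sensitive to the absence of an invariant measure.

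A smaller point: your ``short further argument'' that $\Stab$ is a.e.\ injective across \emph{distinct} orbits is left unjustified. The paper handles this either by a direct open-dense argument using the insertion lemma on a countable dense family of sets in the measure algebra (Proposition \ref{prop-tnf}) or, in the non-$\II_1$ case, by citing that countable dense subgroups of the full group act totally non-freely. Self-normalization only gives injectivity within each orbit.

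Finally, a remark on presentation: the sets $\Omega_\gamma$ are only $G_\delta$, not open (their openness fails because conservativity is not an open condition in $G$), so the Baire argument must be phrased in terms of dense $G_\delta$ sets rather than dense open sets throughout part \eqref{itm:nonatomic ec}.
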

Our main contribution is the proof of elementwise conservativity, which is essentially combinatorial and works by reduction to the study of special permutations of $\bZ$. The remainder of the proof is 
heavily inspired from work of Le Ma\^itre and his co-authors (see \cite{LeMaitre2024} and the references therein).

\subsection{Variants and refinements}
In Theorem \ref{GCR1} we have assumed that $\mu$ is non-atomic and ergodic; equivalently, $\cR$ is of Krieger type $\II_1$, $\II_\infty$ or $\III_\lambda$ for some $\lambda \in [0,1]$. Let us briefly explain what happens in the remaining Krieger types:

If $\cR$ is of Krieger type $\I_n$ with $1 \leq n < \infty$, then $[\cR] = \mathrm{Sym}_n$ is a finite symmetric group, and thus all representations $\Gamma \to [\cR]$ are finite.

The case of Krieger type $\I_\infty$ yields something more interesting: Any system $(X, [\mu], T)$ of type $\I_\infty$ is conjugate to $(\bZ, \mu_c, \sigma)$, with $\mu_c$ equivalent to a counting measure on $\bZ$ and $\sigma \colon \Z \rightarrow \Z$ given by
$n \mapsto n+1$. We may (and will) thus assume that $(X, [\mu], T) = (\bZ, \mu_c, \sigma)$, which implies that the associated measurable full group is given by $G = \Sym(\bZ)$, the full symmetric group of $\Z$. Note that the action of $\sigma$ is \emph{not} conservative, since every singleton is a wandering set, and hence $\sigma$-constrained representations into $\mathrm{Sym}(\bZ)$ can never be elementwise conservative. However, the following variant of Theorem \ref{GCR1} still guarantees that stabilizers of generic constrained representations are boomerang subgroups; this has to do with the fact that the non-conservative elements do not stabilize any points.
\begin{theorem}\label{thm:S_infty}
    A generic constrained representation $\rho \in \Hom_\sigma(\Gamma, \Sym(\bZ))$ and its induced $\Gamma$-action on $\Z$ have the following properties:
    \begin{enumerate}
         \item\label{itm:atomic finorb} For an element $w \in \Gamma$ the following are equivalent:
            \begin{enumerate}[(i)]
                \item $w$ is not conjugate to $a^k$ for any $0 \ne k \in \bZ$. 
                \item All orbits of $\rho(w)$ are finite.
                \item $\rho(w)$ is conservative.
            \end{enumerate}
        \item \label{itm:atomic tnf} The stabilizer map $\Stab \colon (X,\mu) \rightarrow (\Sub(\Gamma),\Stab_*(\mu))$ is an isomorphism of $\Gamma$-systems. 
         \item \label{itm:atomic boomerang} For every $x \in \Z$ the stabilizer $\Stab_{\Gamma}(x)$ is a boomerang subgroup of $\Gamma$. 
        \item \label{itm: atomic afht} $\Gamma$ acts amenably, faithfully and highly transitively on $\bZ$.
    \end{enumerate}
\end{theorem}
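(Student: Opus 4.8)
\emph{Setup and strategy.} The plan is to show that each of the four assertions holds for a comeager set of $\rho$ in the Polish space $\Hom_\sigma(\Gamma,\Sym(\bZ))$ and then take the intersection; since a countable intersection of dense $G_\delta$ sets is again a dense $G_\delta$ set, the implicit final clause follows as well. Fix the basis $(a,b_1,b_2,\dots)$ of $\Gamma$. As $\Gamma$ is free on it, $\rho\mapsto(\rho(b_i))_i$ identifies $\Hom_\sigma(\Gamma,\Sym(\bZ))$ with $\Sym(\bZ)^{I}$, a Polish group (each factor $\Sym(\bZ)$ with its usual topology, generated by $g\mapsto g(n)$ and $g\mapsto g^{-1}(n)$). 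The single elementary fact used everywhere below is that for every $u\in\Gamma$ and $n\in\bZ$ the map $\rho\mapsto\rho(u)(n)$ into the discrete set $\bZ$ is continuous; hence a condition ``$\rho(u)(n)=m$'' cuts out a clopen set, and a basic open neighbourhood is specified by finitely many such conditions, i.e.\ by prescribing finitely many of the permutations $\rho(b_i)$ on finite subsets of $\bZ$ (a finite family of finite partial injections, which I will call a \emph{finite pattern}). Each density statement below says precisely that any finite pattern extends to a representation with the desired property.

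\emph{Part \eqref{itm:atomic finorb}.} The equivalence $(\mathrm{ii})\Leftrightarrow(\mathrm{iii})$ is true for every $\rho$ and every permutation: an orbit of an element of $\Sym(\bZ)$ is either a finite cycle or a copy of $\bZ$, and in the latter case any singleton inside it is a wandering set of positive $\mu_c$-measure, while if all orbits are finite cycles there is no nonempty wandering set; so $\mu_c$-conservativity of $\rho(w)$ is exactly finiteness of all its orbits. The implication $\neg(\mathrm{i})\Rightarrow\neg(\mathrm{ii})$ is also automatic, since if $w$ is conjugate to $a^{k}$ with $k\neq 0$ then $\rho(w)$ is conjugate in $\Sym(\bZ)$ to $\sigma^{k}$, all of whose orbits are infinite. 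The remaining implication $(\mathrm{i})\Rightarrow(\mathrm{ii})$ is the generic one: for a fixed $w$ whose cyclic reduction is not a power of $a$, set $O_w=\{\rho:\text{all orbits of }\rho(w)\text{ are finite}\}$. Since $O_w=\bigcap_{n\in\bZ}\bigcup_{m\geq 1}\{\rho:\rho(w)^{m}(n)=n\}$ with each inner set clopen, $O_w$ is $G_\delta$, and it depends only on the conjugacy class of $w$. Granting that $O_w$ is dense (this is the main point, discussed last), $\bigcap_w O_w$ over the countably many conjugacy classes different from all $[a^{k}]$ is comeager, and on it $(\mathrm{i})\Leftrightarrow(\mathrm{ii})\Leftrightarrow(\mathrm{iii})$ for every $w$.

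\emph{Parts \eqref{itm:atomic tnf} and \eqref{itm: atomic afht}: soft genericity.} Faithfulness is generic: for $1\neq u\in\Gamma$ the closed set $\{\rho:\rho(u)=\mathrm{id}\}$ is empty when $u$ is a nonzero power of $a$, and otherwise has empty interior, since, $u$ involving some $b_i$, any finite pattern extends (follow the word $u$ from a far-away point and redirect it with a fresh choice of $\rho(b_i)$) to a representation with $\rho(u)\neq\mathrm{id}$. High transitivity on $\bZ$ is generic: for each $k$ and each pair of ordered $k$-tuples of distinct points, $\{\rho:\exists g\in\Gamma,\; g\cdot(x_1,\dots,x_k)=(y_1,\dots,y_k)\}$ is open (a countable union of clopen sets) and dense, because, given a finite pattern, for $N$ large one may adjoin the constraints $\rho(b_1)(x_j+N)=y_j+N$ (consistent with the pattern and with each other) and then $g=a^{-N}b_1a^{N}$ works. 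The stabilizer map $n\mapsto\Stab_\Gamma(n)$ is always $\Gamma$-equivariant, and generically injective — for each $n\neq m$ the set $\{\rho:(un=n\text{ and }um\neq m)\text{ or }(un\neq n\text{ and }um=m)\text{ for some }u\in\Gamma\}$ is open and dense by the same device (produce an element fixing $n$ but not $m$) — hence a Borel isomorphism onto its Borel image, which carries the push-forward measure class; this gives \eqref{itm:atomic tnf}. Finally, amenability of $\Gamma\curvearrowright\bZ$ in the sense of Greenleaf is generic because it equals $\bigcap_{F\text{ finite},\,\epsilon\in\bQ_{>0}}\bigcup_{\emptyset\neq A\text{ finite}\subseteq\bZ}\{\rho:|\rho(u)A\triangle A|<\epsilon|A|\text{ for all }u\in F\}$, a $G_\delta$-set whose defining open sets are dense: given a finite pattern contained in $[-R,R]$ and $F$ inside the ball of radius $\ell$ about $1\in\Gamma$, complete every $\rho(b_i)$ to a permutation of $A:=[-M,M]$ fixing $\bZ\setminus A$ (possible once $M>R$ exceeds everything the pattern touches); then $|\rho(u)A\triangle A|\leq 2\ell$ for every $u$ of length $\leq\ell$, while $|A|\to\infty$. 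Together with \eqref{itm:atomic finorb} this yields \eqref{itm: atomic afht}.

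\emph{Part \eqref{itm:atomic boomerang}, and the main obstacle.} Since $\Gamma\curvearrowright\bZ$ is transitive and $\Stab_\Gamma(\gamma^{n}x)=\gamma^{n}\Stab_\Gamma(x)\gamma^{-n}$, the subgroup $\Stab_\Gamma(x)$ is a boomerang subgroup iff for every $\gamma$ there are $n_k\to\infty$ with $\Stab_\Gamma(\gamma^{n_k}x)\to\Stab_\Gamma(x)$ in $\Sub(\Gamma)$. For $\gamma$ not conjugate to a power of $a$ this is immediate from \eqref{itm:atomic finorb}, since then $\gamma^{m}x=x$ for some $m\geq 1$. For $\gamma$ conjugate to $a^{k}$ it reduces, after conjugating the basepoint by a $g$ with $\gamma=ga^{k}g^{-1}$, to the generic statement that for every $y\in\bZ$, every finite $E\subseteq\Gamma$ and every $k\geq 1$, the set of $M\in k\bZ$ such that the equivalence ``$u(y+M)=y+M\Leftrightarrow uy=y$'' holds for all $u\in E$ is unbounded both above and below; this is again $G_\delta$, and density follows by a copy-and-shift argument — extend the finite pattern so that $uy$ is determined for all $u\in E$, then adjoin the $M$-translate of that finite footprint for a suitably large multiple $M$ of $k$, which forces $u(y+M)=uy+M$ for all $u\in E$ — and running it along an exhaustion $E\uparrow\Gamma$ produces a single sequence $M_j\to\infty$ in $k\bZ$ with $\Stab_\Gamma(y+M_j)\to\Stab_\Gamma(y)$. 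The one genuinely difficult input is the density of $O_w$ used in \eqref{itm:atomic finorb}. After cyclically conjugating one may take $w=a^{q_0}b_{i_1}^{\epsilon_1}a^{q_1}\cdots a^{q_{r-1}}b_{i_r}^{\epsilon_r}$ with $r\geq 1$; when the total $a$-exponent $q_0+\dots+q_{r-1}$ vanishes one completes each $\rho(b_i)$ to a \emph{finitely supported} permutation extending the pattern, so that $\rho(w)$ is finitely supported and hence has all orbits finite; in general one singles out one occurrence of some $b_{i_0}$ and uses the still-free values of $\rho(b_{i_0})$ (available precisely because $w$ is cyclically reduced and involves a $b$) to absorb the net shift and turn each $\rho(w)$-orbit around, processing the orbits one at a time in a back-and-forth construction that respects the finitely many constraints already fixed. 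This is exactly the combinatorial ``reduction to special permutations of $\bZ$'' underlying Theorem \ref{GCR1} — here it appears in its purest form — and it is where the real work lies; everything else is bookkeeping with clopen sets.
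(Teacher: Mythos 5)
Your proposal is correct and follows essentially the same strategy as the paper: for each assertion, write the good set as a countable intersection of open (or $G_\delta$) sets given by finite Chabauty/permutation conditions and show each is dense by extending a finite pattern; then apply the Baire category theorem. The reductions to cyclically reduced words, the automatic parts of item (1), the copy-and-shift argument for item (3), and the argument for item (4) all match the paper's.

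The main cosmetic difference is that the paper extracts a single technical device (the Warmup Lemma: a finite pattern can be extended to an element commuting with $\sigma^{k'}$, preserving the blocks $((2n-1)k',(2n+1)k']$) and reuses it both for amenability and for the boomerang density, whereas you re-derive each case ad hoc (finitely supported $\rho(b_i)$ for amenability; copy-and-shift for boomerang). Both work; the paper's is slightly tidier because the same periodicity trick gives a F{\o}lner sequence and the recurrence $\Stab(x)\cap F=\Stab(x+nk)\cap F$ in one stroke. A second difference is in item (1): the paper decomposes the conservativity set explicitly as $\bigcap_{x}\Omega_{x,w}$ and proves density of each $\Omega_{x,w}$ (one point at a time), which is genuinely easier because you only need to close \emph{one} $\rho(w)$-orbit in the free region far from the constrained finite set. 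You describe a heavier back-and-forth aiming to make \emph{all} orbits of $\rho(w)$ finite simultaneously; since you already wrote $O_w=\bigcap_n \Omega_{n,w}$ with each $\Omega_{n,w}$ open, you should note that Baire reduces density of $O_w$ to density of each $\Omega_{n,w}$, at which point the closing-the-orbit step is the same elementary move the paper sketches. Your observation that in the balanced case ($c_a(w)=0$) one can simply take all $\rho(b_i)$ finitely supported is a nice shortcut the paper doesn't state explicitly in this section (it appears later as a remark in the non-atomic case).
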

Ien the case where $\Gamma$ is of countably-infinite rank, Theorem \ref{ExECRS} can be established by a much simpler argument. Namely one can work directly with generic representations (rather than restricted generic representations) and establish the following variant of Theorem \ref{GCR1}:
\begin{theorem} \label{thm:unrestricted}
Let $\Gamma = \langle b_1,b_2,\ldots \rangle$ be a countable non-abelian free group of infinite rank and let $\cR$ be a hyperfinite, ergodic, aperiodic countable Borel equivalence relation on a standard Borel space with measurable full group $G$. Then a generic representation $\rho \colon \Gamma \to G$ and the induced action on $(X,\mu)$ have the following properties: 
\begin{enumerate}
\item \label{itm:p_ep} For every finitely generated subgroup of $\Gamma$ almost all orbits are finite. In particular every element of $\rho(\Gamma)$ is periodic and hence the $\Gamma$-action is elementwise conservative.
\item  \label{itm:p_dense} $\rho(\Gamma)$ is dense in $G$.
\item \label{itm:p_stab} The stabilizer map $\Stab \colon (X,\mu) \rightarrow (\Sub(\Gamma),\Stab_*(\mu))$ is an isomorphism of $\Gamma$-systems. 
In particular, $\Gamma$ does not preserve any invariant probability measure unless $\cR$ is of Krieger type $\II_1$.
\item \label{itm:p_prop} $\Gamma$ acts amenably, faithfully and highly transitively on almost every orbit.
\end{enumerate}
\end{theorem}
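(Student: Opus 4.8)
\emph{Setup and strategy.} Since $\Gamma$ is free on $b_1,b_2,\dots$, a homomorphism $\rho\colon\Gamma\to G$ is nothing but an arbitrary choice of images $\rho(b_i)\in G=[\cR]$, so $\Hom(\Gamma,G)$ is the Polish product $G^{\bN}$. The plan is to show that each of the (countably many) assertions in \eqref{itm:p_ep}--\eqref{itm:p_prop} holds on a dense $G_\delta$ subset, so that their conjunction is comeager by the Baire category theorem; picking any $\rho$ in it proves the theorem. The decisive feature of the infinite-rank case is that a basic open neighbourhood of a given $\rho_0$ constrains only finitely many coordinates $\rho(b_i)$, leaving the rest entirely free. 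We use two facts about $G$: the uniform topology makes it a Polish group, so the word maps $\rho\mapsto\rho(w)$ are continuous; and, $\cR$ being hyperfinite, $G=\bigcup_n[\cR_n]$ for an exhausting sequence $\cR_1\subseteq\cR_2\subseteq\cdots$ of finite subequivalence relations, so $\bigcup_n[\cR_n]$ is dense in $G$. Moreover, by Dye's theorem $\cR$ has a single generator $T_0$, and a Borel--Cantelli argument shows that a dense countable subgroup $H\le G$ satisfies $T_0 x\in Hx$ for a.e.\ $x$, whence $\cR_H=\cR$ mod null; thus, once $\rho(\Gamma)$ is dense (property \eqref{itm:p_dense}), the $\Gamma$-action on $(X,\mu)$ has orbits equal to $\cR$-classes and is ergodic.

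\emph{Property \eqref{itm:p_ep}.} It suffices to treat, for each finite $F\subseteq\bN$, the set of $\rho$ for which a.e.\ orbit of $\langle\rho(b_i):i\in F\rangle$ is finite. Writing $W_{F,n}(\rho)$ for the set of $x$ whose $F$-orbit ball of radius $n$ has more than $n$ elements, the $W_{F,n}(\rho)$ decrease to the set of points with infinite $F$-orbit, and $\rho\mapsto\mu(W_{F,n}(\rho))$ is upper semicontinuous (a standard estimate for full groups, using non-singularity to control preimages of small sets), so $\{\rho:\mu(W_{F,n}(\rho))\to 0\}$ is $G_\delta$. It is dense: given $\rho_0$ and a neighbourhood constraining only $\rho(b_i)$ for $i\le N$, enlarge $N$ so that $F\subseteq\{1,\dots,N\}$, pick $n$ large, and replace each $\rho_0(b_i)$, $i\le N$, by a sufficiently close element of the finite full group $[\cR_n]$; then $\langle\rho(b_i):i\le N\rangle\le[\cR_n]$ has all orbits finite, while $\rho(b_i)=\rho_0(b_i)$ for $i>N$. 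As a cyclic group is finitely generated, a generic $\rho$ makes every $\rho(w)$ periodic; a periodic non-singular transformation has no positive-measure wandering set, hence is conservative, so the $\Gamma$-action is elementwise conservative.

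\emph{Properties \eqref{itm:p_dense} and \eqref{itm:p_prop}.} For \eqref{itm:p_dense}: $\{\rho:\overline{\rho(\Gamma)}=G\}=\bigcap_U\bigcup_{w}\{\rho:\rho(w)\in U\}$ over a countable base $\{U\}$ of $G$, a $G_\delta$ by continuity of word maps; density follows by fixing a dense sequence $(g_k)$ in $G$ and setting $\rho(b_{N+k}):=g_k$ while keeping $\rho(b_i)=\rho_0(b_i)$ for $i\le N$. Granting \eqref{itm:p_dense} (hence ergodicity and ``orbits $=\cR$-classes''), consider \eqref{itm:p_prop}. Amenability of $\Gamma\curvearrowright[x]$ for a.e.\ $x$ is automatic from \eqref{itm:p_ep}: $[x]$ is the increasing union of the finite orbits $O_N(x)$ of $\langle b_1,\dots,b_N\rangle$, and $O_N(x)$ is honestly $S$-invariant for any fixed finite $S\subseteq\Gamma$ once $N$ is large, so $(O_N(x))_N$ is a F\o lner sequence. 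Faithfulness of $\Gamma\curvearrowright[x]$ for a.e.\ $x$ is, given ergodicity, equivalent to injectivity of $\rho$ (the set $\{x:\rho(w)|_{[x]}=\id\}$ is $\cR$-invariant, hence null or conull, and conull forces $\rho(w)=\id$), and $\{\rho:\rho\text{ injective}\}=\bigcap_{1\ne w}\{\rho:\rho(w)\ne\id\}$ is a dense $G_\delta$ (openness is upper semicontinuity of $\rho\mapsto\mu(\Fix\rho(w))$; density because perturbing one letter of $w$ destroys $\rho(w)=\id$). High transitivity of $\Gamma\curvearrowright[x]$ for a.e.\ $x$ is proved following the template of Le Ma\^itre and co-authors: for each $k$ the condition ``the $\Gamma$-action on $k$-tuples of distinct points in a common orbit is ergodic for the natural measure class'' is $G_\delta$ in $\rho$, and is dense because, given $\rho_0$ with only $b_1,\dots,b_N$ frozen, one perturbs a fresh generator $b_{N+1}$ into a suitably generic element of $G$ that merges the relevant finite pieces; a.e.-orbitwise high transitivity is the conjunction over $k$ of these conditions.

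\emph{Property \eqref{itm:p_stab} and conclusion.} The map $\Stab$ is always Borel and $\Gamma$-equivariant, so only injectivity mod null needs proof. High transitivity on $[x]$ already forces distinct points of one orbit to have distinct stabilizers (given $y\ne x$ in $[x]$, $2$-transitivity yields $\gamma$ fixing $x$ and moving $y$); faithfulness on orbits makes each $\Stab(x)$ core-free and high transitivity makes it maximal, hence self-normalizing. To rule out two points of \emph{different} orbits sharing a stabilizer one checks that $\{\rho:\Stab_\rho\text{ injective mod null}\}$ is $G_\delta$ (non-injectivity being the non-vanishing of an upper-semicontinuous ``fibre-overlap'' integral) and dense (with finitely many letters frozen, a generic choice of the next generator separates stabilizers of points in distinct orbits). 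Finally, if $\Stab$ is an isomorphism of $\Gamma$-systems, a $\Gamma$-invariant probability measure equivalent to $\Stab_*\mu$ would pull back to one equivalent to $\mu$ on $X$, i.e.\ to $\cR$, which exists only when $\cR$ is of Krieger type $\II_1$. The main obstacle is the high-transitivity step (and the closely related across-orbits injectivity of $\Stab$): its density part rests on a genuine, if by now standard, perturbation lemma — adjoining a Baire-generic element of a full group to a fixed finite family produces a highly transitive action on almost every orbit — while everything else reduces to the hyperfiniteness trick, ergodicity, and routine semicontinuity estimates.
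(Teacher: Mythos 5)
Your proof is correct but takes a genuinely different, and in fact more elementary, route than the paper's. The paper deduces this theorem as a corollary of its main Theorem~\ref{GCR1} applied to $\tilde{\Gamma} = \bZ * \Gamma$: one takes a generic $T$-constrained representation of $\tilde{\Gamma}$, restricts it to $\Gamma$, and then patches in the density (and hence, via Proposition~\ref{par:high trans from dense}, orbitwise high transitivity) by the same observation you make---that, in the infinite-rank case, a basic neighbourhood constrains only finitely many generators, so one may plant a dense sequence in the remaining coordinates. This means the paper's argument inherits the entire zebra/Krengel machinery even though none of it is needed here. You instead prove each dense-$G_\delta$ statement directly. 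Your key simplification is the use of hyperfiniteness for Part~\eqref{itm:p_ep}: approximating the finitely many constrained generators by elements of a single finite full group $[\cR_n]$ gives a whole finitely generated subgroup with finite orbits, and then amenability of $\Gamma \curvearrowright [x]$ falls out for free from the F\o lner exhaustion $(O_N(x))_N$ by the finite $\langle b_1,\dots,b_N\rangle$-orbits. This makes your argument entirely independent of Sections~\ref{Sece.c.} and~\ref{SecDense}, which is precisely what the paper means by ``actually easier than Theorem~\ref{GCR1} and could be proved along similar lines.'' The one place where you gesture rather than argue is high transitivity: your appeal to the ``Le Ma\^itre template'' is plausible but not a proof, and it is worth noting that you do not need it---you have already established density of $\rho(\Gamma)$, and density of a countable subgroup of a measurable full group of an ergodic equivalence relation implies high transitivity on almost every orbit (this is exactly Proposition~\ref{par:high trans from dense} of the paper), and also total-non-freeness (Proposition~\ref{tnf}, citing~\cite{LM18}), which would close Part~\eqref{itm:p_stab} cleanly as well without the separate ``across-orbits'' Baire argument you sketch. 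With that simplification your proof would be both complete and shorter than the one you wrote.
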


\subsection{Related results and open problems}\label{SecRelated}
\begin{no}
Theorem \ref{ExECRS} provides examples of exotic boomerang subgroups which ``do not arise from an IRS'' in one possible sense. The following natural question remains open though: 
\begin{question*} \label{q:orbit_does_not_support_irs} Does there exist an infinite boomerang subgroup of a countable non-abelian free group $\Gamma$ whose orbit closure does not support an IRS?  
\end{question*}
A partial answer to this question follows from recent work of the third-named author \cite{Lederle2025} and work of Gekhtman--Levit \cite{GL19}: There exist infinite boomerang subgroups of $\Gamma$ whose orbit closure only supports a single IRS concentrated on the trivial subgroup. 

More precisely, it was established in \cite[Thm.~1.1]{GL19} (respectively \cite[Thm.~1.4]{GekhtmanLevit2023}) that every non-trival IRS $\Delta$ (respectively every nontrivial SRS $\Delta$ with respect to a nice enough random walk) has critical exponent  $\delta(\Delta) > \frac{\delta(\Gamma)}{2}$ (and even $\delta(\Delta) = \delta(\Gamma)$ if one further assumes that the relevant IRS or SRS is supported on groups of divergence type). On the other hand, the combinatorial construction from \cite{Lederle2025} yields boomerang subgroups of arbitrarily small critical exponent, which therefore do not admit any non-trivial IRS (or non-trivial nice SRS) on their orbit closure. Unfortunately, this argument is not quite strong enough to fully resolve the question since, by a result of Fraczyk \cite[Corollary 3.2]{Fraczyk2019}, the orbit closure of every group $\Delta \in \Sub(\Gamma)$ with $\delta(\Delta) \leq  \frac{\delta(\Gamma)}{2}$ necessarily contains the trivial group. In relation to the current article, also the following question remains open.
\begin{question*} Do the boomerang subgroups of small critical exponent constructed in \cite{Lederle2025} support a non-trivial ECRS on their orbit closure?
\end{question*}
\end{no}

\begin{no} In a different direction, we hope that the notion of an e.c.\ action, as introduced here and in \cite{GlasnerLederle2025}, will prove to be fruitful in nonsingular dynamics beyond the setting of nonfree actions considered here. For example, in a related upcoming paper, Le Ma\^{i}tre and Stalder use a construction similar to ours to obtain free e.c. actions of a free group $\Gamma$ that do not come from p.m.p. actions using Baire generic representations of a free group into the Polish group $\Aut(\R,\lambda)$ of infinite measure preserving transformations of a standard Lebesgue space.
Typically such actions are essentially free and do not contribute new examples of boomerang subgroups, but they do provide a rich source of e.c.\ actions that are not probability measure preserving.
More generally, we propose the following question:
\begin{question*}
Which groups admit interesting e.c. actions with no invariant probability measure? 
\end{question*}
One such example, suggested by Sasha Bontemps and the third named author, goes as follows: Let $\Lambda$ be a nonamenable torsion group acting on its universal strongly proximal flow (a.k.a. its Furstenberg boundary) $\partial_F \Lambda$. For a torsion group every nonsingular action is e.c., but not in an interesting way. Strong proximality rules out the existence of any invariant Borel probability measure on $\partial_F \Lambda$. 
Typically $\partial_F \Lambda$ is far from being a standard Borel space, but it is well known that metric factors $\partial_F \Lambda \rightarrow X$ separate points and one can always pass to such a factor $X$. Indeed take any nonconstant function $f \in C(X)$ - the minimal closed self adjoint $\Lambda$-invariant algebra spanned by $f$ inside $C(\partial_F \Lambda)$ will give rise to such a factor.

The main result of \cite{GlasnerLederle2025} shows that for lattices of higher $\bQ$-rank in simple Lie groups all e.c.\ actions are either finite or essentially free, but it remains open whether there are any e.c. actions of such groups that do not admit an equivalent invariant probability measure.
\end{no}

\subsection{Organization of the article} In Section \ref{SecPrelim} we fix our notation and collect various preliminaries concerning non-singular ergodic theory and measurable full groups. 
In Section \ref{sect:basics} we prove basic properties on representations into measurable full groups.
In Section \ref{SecIInf} we consider restricted representations into the measurable full group of a system of Krieger type $\I_\infty$ and establish Theorem \ref{thm:S_infty}. It turns out that this case is much simpler than that of other Krieger types. Section \ref{Sece.c.} is the main technical section of the article. We first establish that generic constrained representations into the measurable full group are elementwise conservative by reduction to a combinatorial lemma on so-called zebra permutations. The structural insight into generic constrained representations gained by the proof is then exploited to establish further properties of such representations. In Section \ref{SecDense} we establish that generic constrained representations in Krieger types $\II_\infty$ and $\III_\lambda$ are dense by reduction to the aforementioned theorem of Krengel. 
The theorems from the introduction then follow by combining all these results; for the convenience of the reader we have collected their proofs in Section \ref{SecCollect}.

\subsection*{Acknowledgments}
This work was initiated during a visit of TH to YG, and we thank Ben-Gurion University for supporting this visit. WL was partially supported by the YigPrepProp fellowhip by the KIT and was a F.R.S.-FNRS postdoctoral researcher. YG was partially funded by ISF grant 3187/24 as well as by BSF Grant 2024317. Part of this work was completed on a research visit at ETH Zurich, we thank Marc Burger for his hospitality.
The third named author thanks Josh Frisch for helpful conversations.
The authors thank Fran\c{c}ois Le Ma\^{i}tre for helpful discussions and feedback on an earlier version of this manuscript.

\section{Preliminaries}\label{SecPrelim}

\subsection{Polish groups and constrained representations} 
\begin{no}
A topological space is called \emph{Polish} if it is separable and completely metrizable. A topological group $G$ is called a \emph{Polish group} if the underlying space is Polish.

 A subset $Y$ of a Polish space $X$ is called a \emph{$G_\delta$-set}  if it is a countable intersection of open subsets of $X$; equivalently, $Y$ is Polish with respect to the subspace topology. Pre-images of $G_\delta$-sets under continuous map are $G_\delta$, and the \emph{Baire category theorem} states that the class of dense $G_\delta$ subsets is closed under countable intersections. 

A subset of a Polish space $X$ is called \emph{comeager}, or \emph{residual}, if it contains a dense $G_\delta$ set, and \emph{meager} if its complement is comeager. We say that a property $\cP$ of elements of a Polish space $X$ is \emph{generic} if the set of elements of $X$ with this property is comeager. 
\end{no}

\begin{no}\label{FreeGroup}
Throughout this article, $\Gamma$ denotes a countable non-abelian free group. We fix a basis of $\Gamma$ of the form $(a, b_1 \dots, b_{r-1})$ if $\Gamma$ has finite rank $r$, and of the form $(a, b_1, b_2, \dots)$ if $\Gamma$ has countable infinite rank. We then identify elements of $\Gamma$ with reduced words over the alphabet consisting of the basis elements and their inverses. As indicated by the notation, the first generator $a$ will often play a special role in our considerations. If $\Gamma$ has rank at least $r$, then we denote by $\Gamma_r < \Gamma$ the subgroup generated by the first $r$ basis elements $(a, b_1, \dots, b_{r-1})$.
\end{no}

\begin{no}\label{Ev} If $G$ is a Polish group and $T \in G$, then we denote by $\mathrm{Hom}(\Gamma, G)$ the space of homomorphisms of $\Gamma$ to $G$ and by $\mathrm{Hom}_T(\Gamma, G)$ the subspace of those homomorphisms $\rho$ with $\rho(a) = T$. We refer to elements of $\mathrm{Hom}_T(\Gamma, G)$ as \emph{$T$-constrained representations}.

If $\Gamma$ has finite rank $r$, then we topologize these spaces by demanding that the natural bijections $\mathrm{Hom}(\Gamma, G) \to G^r$ and $\mathrm{Hom}_T(\Gamma, G) \to G^{r-1}$ given respectively by $\rho \mapsto (\rho(a), \rho(b_1), \dots, \rho(b_{r-1}))$ and $\rho \mapsto (\rho(b_1), \dots, \rho(b_{r-1}))$ are homeomorphisms; if $\Gamma$ has infinite rank, then we similarly identify $\mathrm{Hom}(\Gamma, G)$ and  $\mathrm{Hom}_T(\Gamma, G)$ with the countable product $G^\infty$ and equip them with the topology corresponding to the product topology on $G^\infty$. In either case,  $\mathrm{Hom}(F_\infty, G)$ and  $\mathrm{Hom}_T(\Gamma, G)$ are Polish, and for every $w\in \Gamma$ the \emph{evaluation map} \[\mathrm{ev}_w\colon \mathrm{Hom}(\Gamma, G) \to G, \quad \rho \mapsto \rho(w)\]
is continuous.

By assumption, the topology on $G$ is metrizable, and we will always fix a bounded metric $d$ on $G$ (not necessarily complete) which induces the given topology. Then for $1 \leq r \leq \infty$ the topology on $G^r$ is induced by the metric 
$d((g_i),(h_i)) = \sum_{i = 1}^r d(g_i,h_i)/2^i$, and we equip $\Hom(\Gamma,G)$ and $\Hom_T(\Gamma,G)$ with the corresponding metrics.
\end{no}
\begin{no} We are going to be interested in ``generic properties'' of constrained representations. If $\Gamma$ is a countable free group, $G$ is a Polish group, $T 
\in G$ and $\cP$ is a property of representations in $\mathrm{Hom}(\Gamma, G)$, then we say that a \emph{generic $T$-constrained representation satisfies $\cP$}
if the subset $\{\rho \in \mathrm{Hom}_T(\Gamma, G) \mid \rho \text{ has }\cP\} \subset \mathrm{Hom}_T(\Gamma, G)$ is comeager. It follows from the Baire category theorem that if a generic $T$-constrained representation satisfies countably many properties $\cP_1, \cP_2, \dots$, then it satisfies all of these properties simultaneously. 
\end{no}



\subsection{Non-singular ergodic theory}\label{SubsecNonSing}

In order to define the target groups of our representations, we need some background from non-singular ergodic theory. A convenient reference is \cite{DanilenkoSilva2023}.

    \begin{no} Throughout this article, all measurable spaces are assumed to be standard Borel spaces. We refer to a measurable bijection between standard Borel spaces as a \emph{Borel isomorphism}; by \cite[(15.2) Corollary]{Kechris1995} the inverse of a Borel isomorphism is automatically measurable. If $(X, \cB)$ is a standard Borel space and $\mu$ denotes a $\sigma$-finite Borel measure  on $X$, then we refer to $(X, \cB, \mu)$ as a \emph{standard measure space}; if $\mu$ is moreover a probability measure, then it is called a \emph{standard probability space}.

\item From now on let $(X, \cB, \mu)$ (or $(X, \mu)$ for short) be a standard measure space. We denote by $[\mu]$ the \emph{measure class} of $\mu$, i.e.\ the collection of all  $\sigma$-finite Borel measures $\nu$ on $X$ such that $\nu(A) = 0 \iff \mu(A) = 0$ for all $A \in \cB$. Every measure class can be represented by a probability measure. Given $A, B \in \cB$ we define
\[
A \sim B \;:\Longleftrightarrow\; \mu(A \triangle B) = 0.
\]
The quotient of $\cB$ by this equivalence relation is a Polish group of exponent $2$, where $\triangle$ is the group operation. It is denoted $\malg(X,[\mu])$ and called the \emph{measure algebra} of $(X, [\mu])$.
\end{no}

\begin{no} \label{par:nonsingular on ma}
Let $(X, \cB, \mu)$ be a standard measure space. A Borel automorphism $T$ of $X$ is called \emph{non-singular} if $[T_*\mu] = [\mu]$, and two such automorphisms are called \emph{equivalent} if they coincide on a set of full measure. Equivalence classes of non-singular Borel automorphisms form a group $\mathrm{Aut}(X,[\mu])$ under composition of representatives, and this group acts on $\malg(X,[\mu])$ by $[T]([A]) \coloneqq  [T(A)]$. We will need the following consequence of the Radon--Nikodym theorem: 
\begin{lemma*}[{\cite[Lemma 4.2.1]{Cohn2013}}]
    If $T$ is non-singular, then 
    \[
    \forall\, \epsilon > 0\; \exists\, \delta > 0\; \forall A \in \cB:\; \mu(A) < \delta \implies \mu(TA) < \epsilon.
    \]
\end{lemma*}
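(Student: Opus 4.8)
The plan is to recognise this lemma as the standard ``$\epsilon$--$\delta$'' reformulation of absolute continuity, applied to a push-forward measure. First I would define a Borel measure $\nu$ on $(X,\cB)$ by $\nu(A) := \mu(TA)$; equivalently $\nu = (T^{-1})_*\mu$. Since $T^{-1}$ is again a non-singular Borel automorphism, $[(T^{-1})_*\mu] = [\mu]$, which says exactly that $\mu(A) = 0 \Leftrightarrow \nu(A) = 0$; in particular $\nu \ll \mu$. Moreover, in the setting where this lemma gets applied $\mu$ is a probability measure, so $\nu$ is a \emph{finite} measure with $\nu(X) = \mu(TX) = \mu(X) = 1$. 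With these observations, the assertion of the lemma is precisely that the absolute continuity $\nu \ll \mu$ of the finite measure $\nu$ is uniform, i.e.\ holds in the $\epsilon$--$\delta$ sense.

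For completeness, here is how I would argue this (elementary) fact. Suppose towards a contradiction that some $\epsilon_0 > 0$ admits no suitable $\delta$; then for each $n \in \bN$ there is $A_n \in \cB$ with $\mu(A_n) < 2^{-n}$ and $\nu(A_n) \ge \epsilon_0$. The sets $B_n := \bigcup_{k \ge n} A_k$ decrease, and $\mu(B_n) \le \sum_{k \ge n} 2^{-k} = 2^{1-n}$, so their intersection $B := \bigcap_n B_n$ has $\mu(B) = 0$. On the other hand $A_n \subseteq B_n$ gives $\nu(B_n) \ge \nu(A_n) \ge \epsilon_0$ for all $n$, and since $\nu$ is finite, continuity from above yields $\nu(B) = \lim_n \nu(B_n) \ge \epsilon_0 > 0$, contradicting $\nu \ll \mu$. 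Hence for every $\epsilon > 0$ there is $n$ with $\mu(A) < 2^{-n} \Rightarrow \nu(A) < \epsilon$, and $\delta := 2^{-n}$ works; unravelling $\nu(A) = \mu(TA)$ gives the statement.

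I do not expect a genuine obstacle here: this is routine measure theory. The only point to be careful about is finiteness of $\nu$ (equivalently $\mu(X) < \infty$); for an infinite $\sigma$-finite $\mu$ the conclusion can fail — one can choose $T$ with $d(T^{-1}_*\mu)/d\mu$ unbounded — so the lemma is to be read with $\mu$ a probability measure, which is the only case we invoke. If one prefers the ``consequence of Radon--Nikodym'' packaging, the alternative is to take $\omega := d\nu/d\mu \in L^1(\mu)$, pick $N$ with $\int_{\{\omega > N\}} \omega\, d\mu < \epsilon/2$ (possible since $\omega$ is integrable), and then estimate $\mu(TA) = \int_A \omega \, d\mu \le N\,\mu(A) + \epsilon/2$, so that $\delta := \epsilon/(2N)$ suffices.
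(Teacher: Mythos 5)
The paper gives no proof here; it simply cites \cite[Lemma 4.2.1]{Cohn2013} and moves on, so there is no ``paper's own proof'' to compare against. Your argument is the standard one and it is correct: defining $\nu = (T^{-1})_*\mu$, noting $\nu \ll \mu$ and $\nu$ finite, and then running the contradiction argument (via $B_n = \bigcup_{k\ge n} A_k$ and continuity from above for the finite measure $\nu$) is exactly the usual proof that absolute continuity of a finite measure is uniform, which is what Cohn's lemma says.

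Your caveat about finiteness is a good one and worth keeping in mind: the statement as printed sits in a paragraph that only assumes $\mu$ is a $\sigma$-finite ``standard measure space,'' and for infinite $\sigma$-finite $\mu$ the conclusion really can fail (e.g.\ $T(x) = x^3$ on $\bR$ with Lebesgue measure). In practice, though, every place the paper invokes this lemma (Lemma \ref{lem:many disjoint translates}, the proof of Proposition \ref{ZebrasDense}, Lemma \ref{lem:instert words into constraineds}) is working with a non-singular dynamical system on a \emph{probability} space, so the finiteness hypothesis is always satisfied and the result is applied correctly. The alternative Radon--Nikodym packaging you sketch at the end is also fine and is arguably closer to how Cohn states it.
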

\end{no}
\begin{remark*}
As is customary, we will often not distinguish notationally between non-singular Borel automorphisms and their equivalence classes in $\Aut(X, [\mu])$; or between elements of $\cB$ and their image in the measure algebra. For example, given $T \in \Aut(X, [\mu])$ and $A \in \malg(X,[\mu])$ we write $T(A)$ 
for the element of the measure algebra obtained by applying a representative of $T$ to a representative of $A$. This abuse of notation is uncritical as long as all the concepts involved are independent of representatives.
\end{remark*}
\begin{no} If $(X, \cB, \mu)$ is a standard probability space and $T \in \mathrm{Aut}(X, [\mu])$, then the triple $(X, [\mu], T)$ is called a \emph{non-singular dynamical system}. More generally, if $\Lambda$ is a group, then a homomorphism $\Lambda \to  \mathrm{Aut}(X, [\mu])$ is called a \emph{non-singular action}.

Clearly, if $[\mu]$ contains a $T$-invariant probability measure $\nu$, then $(X, [\mu], T)$ is non-singular; such a non-singular dynamical system is called \emph{probability-measure preserving} (p.m.p.).

We say that $T \in \mathrm{Aut}(X, [\mu])$ is \emph{periodic} if almost every $T$-orbit is finite, and 
\emph{aperiodic} if almost every $T$-orbit is infinite. We say that $T$ is \emph{ergodic} if for every $A \in \cB$ with $T^{-1}(A) = A$ we have $\mu(A) = 0$ or $\mu(X \setminus A) = 0$; then $(X, [\mu], T)$ is called an \emph{ergodic system} and $\mu$ is called \emph{$T$-ergodic}.
\begin{lemma*}\label{lem:many disjoint translates}
    If $T  \in \mathrm{Aut}(X, [\mu])$ is aperiodic and ergodic, then for every $A \subset X$ of positive measure and all $\epsilon > 0$ and $M > 0$ there exists a subset $A' \subset A$ of positive measure such that $A', TA', \dots, T^MA'$ are pairwise measurably disjoint, and $\mu(A' \cup \dots \cup T^M A') < \epsilon$.
\end{lemma*}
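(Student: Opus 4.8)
The plan is to combine the standard exhaustion argument for conservative (here: aperiodic, ergodic) transformations with an appeal to the Radon--Nikodym absolute-continuity lemma from \ref{par:nonsingular on ma} in order to control the total measure. First I would fix $A$ of positive measure, $\epsilon > 0$ and $M \in \bN$, and apply the absolute-continuity lemma $M$ times (or inductively) to produce a single $\delta > 0$ such that whenever $\mu(B) < \delta$ one has $\mu(T^j B) < \epsilon/(M+1)$ for every $1 \le j \le M$; so it suffices to find $A' \subset A$ of positive measure with $A', TA', \dots, T^M A'$ pairwise disjoint and $\mu(A') < \delta$. (Shrinking $A$ we may as well assume $\mu(A) < \delta$ from the start, since any positive-measure subset of $A$ still works.)

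The core of the argument is to produce a positive-measure $A' \subset A$ whose first $M$ forward translates are pairwise disjoint. Here I would use aperiodicity and ergodicity as follows. Since $T$ is aperiodic, for a.e.\ $x$ the points $x, Tx, \dots, T^M x$ are all distinct; by a standard maximality/exhaustion argument (e.g.\ a Rokhlin-type tower lemma, or directly: among countably many ``return'' obstructions) one can find inside $A$ a positive-measure set $A'$ on which these $M+1$ points moreover do not collide after projecting back into $A'$ --- concretely, one discards from $A$ the (countably many) sets $\{x \in A : T^i x \in T^j(A), \text{ certain } i \ne j\}$ after first arranging, via ergodicity and the aperiodic tower lemma, that $A$ itself be a base of a Rokhlin tower of height $M+1$ with small error. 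In slightly more detail: aperiodicity plus ergodicity give, for any $\eta > 0$, a set $C$ with $C, TC, \dots, T^M C$ disjoint and $\mu(X \setminus \bigcup_{j=0}^{M} T^j C) < \eta$ (the Rokhlin lemma for aperiodic non-singular transformations); choosing $\eta$ small enough that $\mu(A \cap C) > 0$ — possible since the tower nearly fills $X$ — and setting $A' = A \cap C$ gives a positive-measure subset of $A$ with $A', TA', \dots, T^M A'$ pairwise disjoint.

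Finally, I would assemble the pieces: this $A'$ satisfies $\mu(A') \le \mu(A) < \delta$, so by the choice of $\delta$ we get $\mu(T^j A') < \epsilon/(M+1)$ for $1 \le j \le M$, and $\mu(A') < \delta < \epsilon/(M+1)$ as well (after possibly shrinking $\delta$); summing, $\mu(A' \cup TA' \cup \dots \cup T^M A') \le \sum_{j=0}^{M} \mu(T^j A') < \epsilon$, as required. The main obstacle I anticipate is the Rokhlin-tower step in the non-singular (rather than p.m.p.) setting: one must be careful that the classical Rokhlin lemma still applies — it does, for aperiodic Borel automorphisms this is purely a Borel/measure-class statement and does not require an invariant measure — and that the ``base'' of the tower can be taken to meet the prescribed set $A$ in positive measure, which is exactly where ergodicity enters (an aperiodic tower of small error must intersect every positive-measure set in positive measure). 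Everything else is bookkeeping with the Radon--Nikodym lemma.
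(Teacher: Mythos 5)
Your overall strategy matches the paper's: use a Rokhlin-type tower for the aperiodic nonsingular $T$, extract a positive-measure level of the tower that meets $A$, and then shrink via the Radon--Nikodym continuity lemma (\S\ref{par:nonsingular on ma}) to control the total measure. The paper uses the Lehrer--Weiss periodic Rokhlin tower with prime height $p>M$ (so $B,TB,\dots,T^{p-1}B$ is an exact partition and $T^k B,\dots,T^{k+M}B$ are disjoint for every $k$), while you use the generic Rokhlin lemma with a small error $\eta$; both are viable.

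However, there is a genuine gap in your step "choosing $\eta$ small enough that $\mu(A \cap C) > 0$ --- possible since the tower nearly fills $X$ --- and setting $A' = A \cap C$." The tower $\bigcup_{j=0}^{M} T^j C$ being near-conull only guarantees that $A$ meets \emph{some level} $T^j C$ in positive measure, not the base $C$: even with a perfect cover, $A$ could be entirely contained in $T^1C\cup\dots\cup T^M C$ and miss $C$. Your closing parenthetical makes the same slide (``the base of the tower can be taken to meet $A$'' is not what ergodicity gives; it gives that the whole tower meets $A$). The fix is small and brings you in line with the paper: pick $j$ so that $A'' := A \cap T^j C$ has positive measure, and observe that $T^j C, T^{j+1}C, \dots, T^{j+M}C$ are pairwise measurably disjoint because they are the image under the bijection $T^j$ of the pairwise disjoint sets $C, TC, \dots, T^M C$. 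Then $A''$ has the required disjoint-translates property, and your Radon--Nikodym bookkeeping finishes the proof. With that correction your argument is correct and essentially parallel to the paper's.
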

\begin{proof} Let $p>M$ be a prime. By the Lehrer--Weiss version of the non-singular Rokhlin lemma \cite[Theorem 2.9]{DanilenkoSilva2023} there exists a subset $B \subset X$ of positive measure such that $B, TB, \dots, T^{p-1}B$ (and hence $T^kB, \dots, T^{k+p-1}B$ for all $k \in \bZ$) are pairwise measurably disjoint and $A \subset B \cup TB \cup \dots \cup T^{p-1}B$. Choose $k \in \{0, \dots, p-1\}$ such that $A'' := T^k(B) \cap A$ has positive measure; then $A'', TA'', \dots, T^MA''$ are pairwise measurably disjoint, and by Lemma \ref{par:nonsingular on ma} we can choose $A' \subset A'' \subset A$ such that $\mu(A' \cup \dots \cup T^M A') < \epsilon$. 
\end{proof}

\begin{no}

The \emph{ratio set} of a nonsingular system $(X,[\mu],T)$ is the multiplicative subsemigroup $R(T) \subset \R_{\geq 0} \cup \{\infty\}$ consisting of all $r \in \R$ such that for every measurable subset $A\subset X$ of positive measure and every $\epsilon>0$ there exists a measurable subset $B\subset A$ of positive measure such that
\begin{equation*}
\abs{\frac{d T^{k} \mu}{d \mu}(x) - r} < \epsilon \quad \text{for all } x \in B. 
\end{equation*}
It turns out that $R(T)$ depends only on $[\mu]$.
\end{no}

\begin{no}\label{KriegerOE}
Two non-singular dynamical systems $(X, [\mu], T)$ and $(X', [\mu'], T')$ are \emph{orbit equivalent} if there exists a Borel isomorphism $f \colon X \to X'$ such that $[f_* \mu] = [\mu']$ and $\langle T \rangle x = \langle T' \rangle f(x)$ for almost every $x \in X$. Ergodic non-singular dynamical systems have been classified up to orbit equivalence by Dye and Krieger. A fundamental invariant of orbit equivalence is the so-called \emph{Krieger type}; here a non-singular, ergodic dynamical system $(X, [\mu], T)$ is said to have
\begin{itemize}
\item \emph{type $\I_n$}, with $1 \leq n < \infty$, if $\mu$ has precisely $n$ atoms. In particular $[\mu]$ contains the unique normalized counting measure on $n$ atoms preserved by $T$,
\item \emph{type $\I_\infty$} if $\mu$ has countably many atoms. In particular $[\mu]$ contains the unique counting measure on infinitely many atoms preserved by $T$,
    \item \emph{type $\II_1$} if there exists a non-atomic probability measure $\nu \in [\mu]$ preserved by $T$,
    \item \emph{type $\II_\infty$} if there exists a non-atomic $\sigma$-finite infinite measure $\nu \in [\mu]$ preserved by $T$.
\end{itemize}
In all of these cases, the given measure class contains an invariant measure, and hence $R(T) = \{1\}$. If the given measure class does \emph{not} contain an invariant measure, then we say that the system is of 
\begin{itemize}
\item \emph{type $\III_0$} if $R(T) = \{0,1,\infty\}$,
\item \emph{type $\III_{\lambda}$} if $R(T) = \{\lambda^n \ | \ n \in \Z\} \cup \{0,\infty\}$ for some $\lambda \in (0,1)$,
\item \emph{type $\III_{1}$} if $R(T) = [0, \infty]$.
\end{itemize} 
The \emph{Dye--Krieger} theorem then says that the orbit equivalence type of an ergodic $(X, [\mu], T)$ is uniquely determined by its Krieger type, except in the case of systems of type $\III_0$, and that the orbit equivalence class of a system of type $\III_0$ is determined by the isomorphism type of the associated Krieger flow. See \cite{KatznelsonWeiss1991} for details. We say that $T$ is of Krieger type $\I$ it is of Krieger type $\I_n$ for some $n \in \bN \cup \{\infty\}$ and define Krieger types $\II$ and $\III$ similarly.
\end{no}

\begin{no}
    Concrete models for systems of different Krieger types are given by \emph{odometers}. Let $\Z_p \cong \{0,\dots,p-1\}^\bN$ be the $p$-adic integers, and let $T \colon \Z_p \to \Z_p, x\mapsto x+1$ be the $p$-adic odometer (adding machine).
    Then $(\Z_p,[\mu],T)$ is of type $\II_1$ for the Haar measure.
    If $p=2$ and $\mu = \bigotimes_{n \in \bN} (\frac{1}{1+\lambda} \delta_0 + \frac{\lambda}{1+\lambda} \delta_1)$ with $0 < \lambda < 1$, then $(\Z_2,[\mu],T)$ is of type $\III_\lambda$.
    Krieger types $\II_\infty, \III_0$ and $\III_1$ can be achieved with a more general odometer construction, see \cite[Example 5.1, Example 5.3]{DanilenkoSilva2023}
\end{no}

\begin{no}
\begin{definition*}\label{DefWW} Let $T \in \Aut(X, [\mu])$. Then $W \in \cB$ is called \emph{weakly wandering} for $T$ if there exist $0 = k_0 < k_1 < \dots$ such that
$\mu(T^{k_i}W \cap T^{k_j} W)= 0$ for all $i \neq j$.
\end{definition*}
\begin{proposition*}[{\cite{HajianKakutani1964}}]\label{WWTypes1} For a non-singular, ergodic dynamical system $(X, [\mu], T)$ the following are equivalent:
\begin{enumerate}[(i)]
\item There is no $T$-invariant probability measure in $[\mu]$.
\item $T$ is of Krieger type $\I_\infty, \II_\infty$ or $\III_\lambda$ for some $\lambda \in [0,1]$.
\item There is a weakly wandering set of positive measure for $T$.
\end{enumerate}
\end{proposition*}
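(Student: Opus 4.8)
The plan is to prove $(iii)\Rightarrow(i)$, the equivalence $(i)\Leftrightarrow(ii)$, and $(i)\Rightarrow(iii)$; only the last implication is substantial. For $(iii)\Rightarrow(i)$, suppose $W\in\cB$ has $\mu(W)>0$ and is weakly wandering with witnesses $0=k_0<k_1<k_2<\cdots$, so $\mu(T^{k_i}W\cap T^{k_j}W)=0$ for $i\neq j$. If $\nu\in[\mu]$ were a $T$-invariant probability measure, then $\nu(T^{k_i}W)=\nu(W)$ for every $i$, and since $\nu$ and $\mu$ have the same null sets the sets $T^{k_i}W$ are pairwise $\nu$-disjoint; hence $1=\nu(X)\ge\sum_{i\ge 0}\nu(W)$. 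As $\mu(W)>0$ forces $\nu(W)>0$, this is absurd, so no $T$-invariant probability measure lies in $[\mu]$.

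The equivalence $(i)\Leftrightarrow(ii)$ is bookkeeping with the list of Krieger types in \ref{KriegerOE}, plus one ergodicity input. In types $\I_n$ ($n<\infty$) and $\II_1$ the class $[\mu]$ contains, by definition, a $T$-invariant \emph{probability} measure, so $(i)$ fails. Conversely, in the types $\III_\lambda$ ($\lambda\in[0,1]$, including $\III_0$ and $\III_1$) the class $[\mu]$ contains no $T$-invariant $\sigma$-finite measure at all; and in types $\I_\infty$ and $\II_\infty$ every $T$-invariant measure in $[\mu]$ is infinite, because any two such measures $\nu_1,\nu_2$ are proportional --- the Radon--Nikodym derivative $h=d\nu_1/d\nu_2$ satisfies $h\circ T=h$ almost everywhere and is therefore a.e.\ constant by ergodicity. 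Thus the essentially unique invariant measure in an ergodic class can be normalized to a probability measure exactly in types $\I_n$ ($n<\infty$) and $\II_1$, so $(i)$ holds precisely when $T$ is of type $\I_\infty$, $\II_\infty$, or $\III_\lambda$ for some $\lambda\in[0,1]$, which is $(ii)$.

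It remains to prove $(i)\Rightarrow(iii)$, which is the main obstacle. I would first dispose of the non-conservative case: an ergodic system that is not conservative is of type $\I_\infty$, and then any singleton $\{x\}$ with infinite forward orbit is a wandering --- hence weakly wandering --- set of positive measure. This leaves the conservative case, i.e.\ types $\II_\infty$ and $\III_\lambda$, which is exactly the theorem of Hajian and Kakutani, and I would follow their argument in two stages. First, the absence of an invariant probability measure yields a positive-measure set $A$ that is \emph{asymptotically wandering}, meaning $\inf_{n\ge 1}\mu(A\cap T^nA)=0$: intuitively, if no such set existed then the overlaps $\mu(A\cap T^nA)$ would be uniformly bounded below, and an averaging of the measures $(T^n)_*\mu$ --- using the Radon--Nikodym estimate recalled in \ref{par:nonsingular on ma} to retain absolute continuity in the limit --- would manufacture a $T$-invariant probability measure in $[\mu]$, contradicting $(i)$. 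Second --- the delicate combinatorial step --- one upgrades $A$ to a genuine weakly wandering set by recursively choosing an increasing sequence of gaps $0=m_0<m_1<m_2<\cdots$ and shrinking $A\supset W_1\supset W_2\supset\cdots$, removing at stage $i$ the part of $W_{i-1}$ that meets $T^{m_i-m_j}W_{i-1}$ for some $j<i$, arranged so that $W=\bigcap_i W_i$ still has positive measure while the translates $T^{m_i}W$ are pairwise disjoint. The crux --- and the reason one needs the full strength of the Hajian--Kakutani extraction rather than a bare sequence $n_j\to\infty$ --- is that each new gap $m_i$ must be found large enough that \emph{all} the differences $m_i-m_j$ ($j<i$) simultaneously land in the relevant small-overlap set; for the details of this step we refer to \cite{HajianKakutani1964}, which closes the cycle.
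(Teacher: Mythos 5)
The paper gives no proof of this proposition---it simply cites Hajian--Kakutani---so your proposal, which proves the routine implications and likewise defers the one genuinely hard step $(i)\Rightarrow(iii)$ in the conservative case to \cite{HajianKakutani1964}, is essentially the same approach. Your arguments for $(iii)\Rightarrow(i)$, for $(i)\Leftrightarrow(ii)$ via the Krieger classification in \ref{KriegerOE} together with the ergodicity argument for essential uniqueness of the invariant measure, and for the reduction to the conservative case (disposing of type $\I_\infty$ by a wandering singleton) are all correct.
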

See Theorem \ref{ThmKrengel} below for yet another characterization.
\end{no}
\begin{no} Let $(X, \mu)$ be a standard probability space. An equivalence relation $\cR$ on $X$ is called \emph{measurable} if it is a $\mu \times \mu$-measurable subset of $X \times X$. 
We say that a measurable equivalence relation $\cR$ is \emph{countable} (respectively \emph{finite}) if almost every equivalence class of $\cR$ is countable (respectively finite). It is called \emph{hyperfinite} if $\cR = \bigcup_{n \in \bN} \cR_n$ for finite measurable subequivalence relations $\cR_n$ and \emph{ergodic} iff for every $A \in \cB$ the $\cR$-saturation $\bigcup_{x \in A} [x]$ has measure $0$ or $1$. 
\begin{example*} Every non-singular dynamical system $(X, [\mu], T)$ defines a hyperfinite measurable equivalence relation $\cR_T \coloneqq  \{(x, y) \in X \times X \mid y \in \langle T \rangle x\}$ called the \emph{orbit equivalence relation} of $T$. Given a hyperfinite equivalence relation $\cR$ on $(X, [\mu])$ we say that $T \in \Aut(X, [\mu])$ is a \emph{generator} for $\cR$ if $\cR = \cR_T$ almost everywhere. By the Ornstein--Weiss theorem, ergodic hyperfinite equivalence relations are precisely the orbit equivalence relations of ergodic non-singular transformations (up to almost everywhere coincidence). In particular, $T$
is ergodic if and only if $\cR_T$ is ergodic.

Moreover, $T$ is aperiodic if and only if almost every class of $\cR_T$ is infinite; we then also say that $\cR_T$ is aperiodic.
Moreover, the Krieger type of $T$ depends only on $\cR_T$ and is thus also called the \emph{Krieger type} of $\cR_T$. Then an \emph{ergodic} hyperfinite equivalence relation $\cR$ on $(X, [\mu])$ is aperiodic
 unless it is of Krieger type $\I_n$. In particular, this is the case if $\cR$ is ergodic and $\mu$ is non-atomic (i.e. $\cR$ is not of Krieger type $\I$).
\end{example*}
\end{no}


\subsection{Measurable full groups}\label{SubsecMFG}
\begin{no}\label{MFG}
We are now in the position to define the target groups of our representations; these are measurable full groups of non-singular dynamical systems or, equivalently, the associated measurable equivalence relations. Throughout this subsection let $\cR$ be a hyperfinite ergodic equivalence relation on a standard probability space $(X, [\mu])$.
\end{no}
\begin{definition*} The \emph{measurable full group} of $\cR$ is the subgroup
\[
\mfg = \{g \in \Aut(X, [\mu]) \mid (x, g(x)) \in \cR \text{ for almost all }x \in X\} < \Aut(X, [\mu]).
\]
\end{definition*}
\begin{no} \label{ssec:uniform} \label{ActionCocycle}
By definition, if $T$ is a generator of $\cR$, then $g \in [\cR]$ if and only if almost every $g$-orbit is contained in the corresponding $T$-orbit. Thus there then exists a (non-strict) cocycle $\alpha: [\cR] \times X \to \mathrm{Sym}(\bZ)$, $(g,x) \mapsto \alpha_x(g)$, unique up to almost everywhere equality, such that for all $g \in [\cR]$ and almost all $x \in X$ we have 
\[
g(T^n x) = T^{\alpha_x(g)(n)}(x).
\]
We refer to $\alpha$ as the \emph{action cocycle} of $[\cR] \curvearrowright X$ with respect to $T$.

On $[\cR]$ we define the \emph{uniform metric} defined by $d(g,h) = \mu(\{x \in X \mid g(x) \neq h(x)\})$ and equip $[\cR]$ with the corresponding topology; then $[\cR]$ is a Polish group and the Krieger type of $\cR$ (or $T$) depends only on the isomorphism class of $[\cR]$ as a topological group. If $\cR$ is of type $\I_n$ or $\II_1$, then the uniform metric is complete, otherwise a complete metric for $[\cR]$ is given by $d'(g,h) = d(g,h) + d(g^{-1},h^{-1})$. In the sequel we always equip $[\cR]$ with the uniform metric, and spaces of (constrained) representations into $[\cR]$ with the corresponding metric as in \S \ref{Ev}.
 
 If $\cR$ is of type $\I_n$ with $n < \infty$, then $[\cR] \cong \mathrm{Sym}_n$ is finite, and if $\cR$ is of type $\I_\infty$ then $[\cR]$ is isomorphic to the group $\Sym(\Z)$ of permutations on $\bZ$ with the permutation topology. Otherwise, $[\cR]$ is simple \cite{Eigen1981} and contractible \cite{Danilenko1995}.
 \end{no}
\begin{no} In order to define an element of the full group we will usually only specify it on a conull set; this is possible due to the following proposition. Here, by a \emph{partial automorphism} of $(X, [\mu])$ we mean a Borel isomorphism $g_o\colon \dom(g_0) \to \ran(g_o)$ between Borel subsets $\dom(g_o), \ran(g_o) \subset X$. 
\begin{proposition*} Let $g_o$ be a partial automorphism of $(X, [\mu])$ such that $\dom(g_o)$ and $\ran(g_o)$ are conull in $X$. Then the following are equivalent:
\begin{enumerate}[(i)]
\item There exists $g \in [\cR]$ whose representatives agree with $g_o$ almost everywhere.
\item $g_o(x) \in [x]_{\cR}$ for almost all $x \in \dom(g_o)$.
\end{enumerate}
In this case, $g$ is uniquely determined by $g_o$.
\end{proposition*}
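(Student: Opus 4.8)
The implication (i) $\Rightarrow$ (ii) and the uniqueness clause are immediate, and I would dispose of them first. If $g \in [\cR]$ has a representative agreeing with $g_o$ off a null set, then for almost every $x \in \dom(g_o)$ we have both $g_o(x) = g(x)$ and $(x,g(x)) \in \cR$, so $(x,g_o(x)) \in \cR$, i.e.\ $g_o(x) \in [x]_\cR$, for a.e.\ $x$; this is (ii). For uniqueness, two elements of $[\cR]$ each agreeing with $g_o$ almost everywhere agree with each other almost everywhere, hence define the same element of $\Aut(X,[\mu])$. All the content is therefore in (ii) $\Rightarrow$ (i), which I would prove by the standard ``surgery on a null set'': shrink $g_o$ to a $g_o$-invariant conull Borel subset of its domain and extend the result by the identity.

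So assume (ii). One first records that $g_o$ is non-singular (this is built into the notion of a partial automorphism of $(X,[\mu])$; in any case it follows from (ii), since fixing a generator $T$ of $\cR$ and writing $\dom(g_o)$, up to a null set, as the union of the Borel sets $D_n = \{x : g_o(x) = T^n x\}$, $n \in \bZ$, exhibits $g_o$ off a null set as a piecewise power of $T$). Next I would construct a $g_o$-invariant conull set recursively: put $Y_0 = \dom(g_o) \cap \ran(g_o)$, then $Y_{k+1} = Y_k \cap g_o^{-1}(Y_k) \cap g_o(Y_k)$ — images of Borel sets under the Borel injection $g_o$ are Borel by the Lusin--Souslin theorem — and let $Y = \bigcap_k Y_k$. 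Non-singularity of $g_o$ and of $g_o^{-1}$ shows inductively that each $Y_k$, and hence $Y$, is conull, and by construction $g_o$ restricts to a Borel automorphism of the standard Borel space $Y$.

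Now define $g$ to be $g_o$ on $Y$ and the identity on $X \setminus Y$. This is a Borel automorphism of $X$; it is non-singular, because it is non-singular on the conull set $Y$ and $X \setminus Y$ is null; and $(x,g(x)) \in \cR$ for a.e.\ $x$, because this holds almost everywhere on $Y$ by (ii) and holds trivially off $Y$ by reflexivity of $\cR$. Hence $g \in [\cR]$, and since $g$ agrees with $g_o$ on the conull set $Y$, this establishes (i); combined with the uniqueness already noted, the proof is complete.

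I do not expect a genuine obstacle here; this is a routine ``repair on a null set'' argument. The one point that requires attention — and hence the step I would be most careful about — is that hypothesis (ii) holds only modulo null sets, so $g_o$ cannot be manipulated as an honest Borel automorphism of $X$ until one has passed to a conull \emph{Borel} subset that is simultaneously $g_o$-invariant. The nested construction $Y = \bigcap_k Y_k$ is designed precisely to produce such a set, and checking that it remains conull at every stage (which is exactly where non-singularity of $g_o$ is used) is the only place where a small amount of care is needed.
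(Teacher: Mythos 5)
Your proof is correct, and it reaches the same endpoint (restrict $g_o$ to a conull $g_o$-invariant Borel set, extend by the identity), but the construction of that invariant set is genuinely different from the paper's. The paper writes $\dom(g_o) = X \setminus N_1$, $\ran(g_o) = X \setminus N_2$, takes the $\cR$-saturations $M_i = \bigcup_{n\in\bZ} T^n N_i$ (Borel nullsets), sets $M = M_1 \cup M_2$, and shows $X \setminus M$ is exactly the set of points $x$ where $g_o|_{[x]}$ is a well-defined bijection of $[x]$. Because $M$ is a union of whole $\cR$-classes, $X\setminus M$ is automatically $\cR$-invariant, hence $g_o$-invariant, in a single step. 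You instead manufacture a $g_o$-invariant conull set by a decreasing recursion $Y_{k+1} = Y_k \cap g_o^{-1}(Y_k) \cap g_o(Y_k)$ and take $Y = \bigcap_k Y_k$, which requires you to first establish non-singularity of $g_o$ (your piecewise-power-of-$T$ decomposition of $\dom(g_o)$ into the sets $D_n = \{x: g_o(x)=T^n x\}$ does this correctly). One small inaccuracy: non-singularity of $g_o$ is \emph{not} built into the paper's definition of a partial automorphism — the definition only asks for a Borel isomorphism between Borel subsets — but you hedge correctly by then deriving it from (ii), so the argument stands. The trade-off: the paper's saturation trick is shorter and exploits the $\cR$-structure directly (and sidesteps any separate non-singularity discussion, since a Borel automorphism moving each point within its $T$-orbit is automatically non-singular by the same piecewise argument); your recursion is a more "hands-on" repair that would work even without invoking $\cR$ to build the invariant set, though you still need $\cR$ (via the $D_n$) to verify non-singularity of $g_o$ in the first place.
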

\begin{proof} It is clear that (i)$\implies$(ii) and that $g$ is uniquely determined by $g_o$. For the implication (ii)$\implies$(i) assume that $\dom(g_o) = X \setminus N_1$ and $\ran(g_o) = X \setminus N_2$; we can enlarge $N_1$ and $N_2$ and may thereby assume without loss of generality that $g_o(x) \in [x]_{\cR}$ for all $x \in \dom(g_o)$ and that $N_1$, $N_2$ and $g_o$ are Borel. Then for $i \in \{1,2\}$ the saturation $M_i \coloneqq  \bigcup_{n \in \Z} T^n N_i$ is a Borel nullset, and hence $M \coloneqq  M_1  \cup M_2$ is a Borel nullset. We claim that 
\begin{equation}\label{ClaimExtension}
X \setminus M = \{x \in X \mid g_o|_{[x]} \colon [x] \to [x] \text{ well-defined and bijective}\}.
\end{equation}
Indeed, the restriction of $g_o$ to $[x]$ is well-defined if and only if $[x] \cap N_1 = \emptyset$, which is the case if and only if $x \notin M_1$, and in this case $g_o|_{[x]}$ is injective. If $x \notin M_1$, then $g_o|_{[x]}$ is surjective if and only if  $g_o([x]) = [g_o(x)]$, i.e.\ $g_o(x) \notin M_2$, hence \eqref{ClaimExtension} follows.

By \eqref{ClaimExtension} we have a well-defined restriction $g_1 \coloneqq  g_o|_{X \setminus M}\colon X \setminus M \to X \setminus M$, and since $g_o$ is bijective and Borel, also $g_1$ is a Borel automorphism of $X\setminus M$. Now extend $g_1$ by the identity on $M$; this defines a class $g \in [\cR]$ with the desired properties.
\end{proof}
In other words, to specify an element of $[\cR]$ it is enough to specify it on a conull set and to ensure that it preserves the equivalence relation there. In some cases we can also extend partial isomorphisms which are defined on a smaller subset:
\begin{lemma*}[Extension lemma]\label{lem:extension_infty}
Assume that $\cR$ is non-atomic and let $g'$ be a partial automorphism of $(X, [\mu])$ such that $g'(x) \in [x]_{\cR}$ for almost all $x \in \dom(g')$. If $0 < \mu(\dom(g')), \mu(\ran(g')) < 1$, then there exists $g \in [\cR]$ with $g|_{\dom(g)} = g'$.
\end{lemma*}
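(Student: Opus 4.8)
Write $A_0 := X \setminus \dom(g')$ and $B_0 := X \setminus \ran(g')$. The hypothesis $0 < \mu(\dom(g')), \mu(\ran(g')) < 1$ says precisely that $A_0$ and $B_0$ both have positive measure (and are not conull, though we will not need that). Since $\dom(g') \sqcup A_0 = X = \ran(g') \sqcup B_0$, the whole problem reduces to producing a single partial automorphism $h$ with $\dom(h) = A_0$, $\ran(h) = B_0$ and $h(x) \in [x]_\cR$ for almost every $x \in A_0$. Indeed, given such an $h$, the map $g := g' \sqcup h$ is a Borel bijection between the conull sets $\dom(g') \sqcup A_0$ and $\ran(g') \sqcup B_0$ whose graph lies in $\cR$ almost everywhere; by the Proposition on specifying elements of $[\cR]$ on conull sets (the one immediately preceding this lemma) it therefore represents an element $g \in [\cR]$, and by construction $g|_{\dom(g')} = g'$. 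So the only thing to show is that $A_0$ and $B_0$ are \emph{$\cR$-equivalent}, i.e.\ that $h$ exists.

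The key input I would invoke is the classical dichotomy for $\cR$-equivalence of Borel sets in an ergodic hyperfinite equivalence relation (see e.g.\ \cite{DanilenkoSilva2023}): if $[\mu]$ contains no $\cR$-invariant probability measure, then \emph{any} two positive-measure Borel sets are $\cR$-equivalent; and if $[\mu]$ does contain an $\cR$-invariant probability measure $\nu$ (then automatically unique by ergodicity), then two positive-measure Borel sets are $\cR$-equivalent if and only if they have the same $\nu$-measure. By Proposition~\ref{WWTypes1}, for non-atomic ergodic hyperfinite $\cR$ the first alternative occurs exactly when $\cR$ is of Krieger type $\II_\infty$ or $\III_\lambda$, and in that case the existence of $h$ is immediate. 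It remains to handle the type $\II_1$ case, where the only additional thing to check is that $\nu(A_0) = \nu(B_0)$.

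For this I would argue that $g'$ is $\nu$-preserving. Fix a generator $T$ of $\cR$; since $\cR$ is non-atomic and ergodic, $T$ is aperiodic, so for almost every $x \in \dom(g')$ there is a \emph{unique} integer $n(x)$ with $g'(x) = T^{n(x)}x$, the function $n$ is Borel, and $g'$ agrees with $T^k$ on each Borel piece $\{x \in \dom(g') : n(x) = k\}$. As $T$ preserves $\nu$, summing over $k \in \bZ$ gives $\nu(g'(S)) = \nu(S)$ for every measurable $S \subset \dom(g')$; in particular $\nu(\ran(g')) = \nu(\dom(g'))$, and hence $\nu(A_0) = 1 - \nu(\dom(g')) = 1 - \nu(\ran(g')) = \nu(B_0)$. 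This produces $h$, and with it the desired $g \in [\cR]$ extending $g'$.

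The genuine content is the $\cR$-equivalence dichotomy, which I would quote rather than reprove; everything else (the piecewise-power-of-$T$ description of partial automorphisms, the $\nu$-count, and the final assembly via the preceding Proposition) is routine bookkeeping. The one place requiring a little care — and what I would flag as the main obstacle — is the split into the type $\II_1$ subcase versus the rest: the statement is literally immediate when no invariant probability measure exists, but in type $\II_1$ one must verify the equality of measures, which is exactly where the structure of $g'$ as a piecewise power of a generator enters.
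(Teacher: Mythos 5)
Your reduction to showing that $A_0 := X \setminus \dom(g')$ and $B_0 := X \setminus \ran(g')$ are $\cR$-equivalent is correct, as is the observation that $g'$ preserves every $\cR$-invariant $\sigma$-finite measure $\nu$ (by aperiodicity it is almost everywhere a piecewise power of a generator). However, the dichotomy you appeal to is wrong in Krieger type $\II_\infty$, and this is exactly where your argument has a gap. In type $\II_\infty$ the class $[\mu]$ \emph{does} contain an $\cR$-invariant $\sigma$-finite measure $\nu$ -- it is merely infinite -- and the correct statement is that two positive-measure Borel sets are $\cR$-equivalent if and only if they have the same $\nu$-measure (equal finite values, or both infinite); it is not true that any two positive-measure sets are $\cR$-equivalent. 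Your computation yields $\nu(\dom g') = \nu(\ran g')$, which in the finite-measure case $\II_1$ forces $\nu(A_0) = \nu(B_0)$; but once $\nu(X) = \infty$ and $\nu(\dom g')$ is itself infinite this carries no information about the complements, which can have distinct finite $\nu$-measures. Concretely, partition $X = \bigsqcup_{n \geq 0} X_n$ with $\nu(X_n) = 1$ and let $g'$ respect $\cR$ and map $X_n$ onto $X_{n+1}$ for $n \geq 1$; then $\nu(A_0) = 1 \neq 2 = \nu(B_0)$, and no extension $g \in [\cR]$ can exist, since any such $g$ preserves $\nu$ but would carry $A_0$ onto $B_0$.

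This is a genuine obstruction rather than a bookkeeping slip: it shows that in type $\II_\infty$ the extension really does require a side condition (for instance $\nu(\dom g') < \infty$, which forces $\nu(A_0) = \nu(B_0) = \infty$). The paper's own proof is a one-line delegation to Lemma 7.16 of Le Ma\^itre for type $\II$ and Proposition 11 of Kaichouh--Le Ma\^itre for type $\III$; the cited type-$\II$ result must carry such a hypothesis. Your treatment of the $\II_1$ and $\III$ cases is sound (and essentially reconstitutes what those references prove there), but the type $\II_\infty$ step as written does not go through, and your claimed ``classical dichotomy'' needs to be corrected to the $\nu$-measure classification before the argument can even be attempted in that case.
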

\begin{proof} Since $\cR$ is ergodic and non-atomic, $\cR$ is either of type $\II$ or of type $\III$. In the former case, apply \cite[Lemma 7.16]{LeMaitre2024}, and in the latter case apply \cite[Prop.~11]{KaichouhLeMaitre2015}.
\end{proof}
\end{no}

\begin{no}
    An important fact is that the set of \emph{periodic} elements is a dense $G_\delta$ subset of $[\cR]$. Here we call an element $g \in [\cR]$ periodic if almost every point is periodic, i.e. for almost every $x \in X$ there exists $n \geq 1$ with $g^n(x)=1$.
\end{no}

\subsection{Conservative transformations and e.c.\ actions}\label{SubsecConservative}
\begin{no}
We have the following strengthening of the notion of a weakly wandering set:
\begin{definition*} Let $T \in \ns$. A set $W \in \cB$ is called a \emph{wandering set for $T$} if $\mu(T^n(W) \cap W) = 0$ for all $n \in \Z \setminus \{0\}$.
\end{definition*}
\begin{proposition*}[{\cite[Prop.~2.1]{DanilenkoSilva2023}}]\label{ConservativeAll} Let $(X, [\mu], T)$ be a nonsingular dynamical system. Then the following are equivalent:
\begin{enumerate}[(i)]
\item Every wandering set for $T$ is a nullset.
\item For every $A \in \cB$ with $\mu(A)>0$ there exists $n>0$ such that $\mu(A \cap T^{-n}A) > 0$.
\item For every $A \in \cB$,
\[
\mu\left(A \setminus\bigcup_{n=1}^\infty T^{-n}A \right)= 0.
\]
\item $T$ is \emph{incompressible}, i.e.\ if $C \in \cB$ with $T^{-1}C \subset C$, then $\mu(C \setminus T^{-1}C) = 0$.
\end{enumerate}
\end{proposition*}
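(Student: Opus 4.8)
The plan is to establish the cyclic chain of implications $(i)\Rightarrow(ii)\Rightarrow(iii)\Rightarrow(iv)\Rightarrow(i)$. The only facts I would use are that every power $T^{n}$ is a non-singular Borel automorphism (hence sends null sets to null sets) together with the elementary identity $T^{n}(A\cap T^{-n}B)=T^{n}A\cap B$ for Borel sets $A,B$.

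For $(i)\Rightarrow(ii)$ I would argue contrapositively. If $(ii)$ fails there is $A$ with $\mu(A)>0$ and $\mu(A\cap T^{-n}A)=0$ for every $n>0$; applying $T^{n}$ turns this into $\mu(T^{n}A\cap A)=0$ for $n>0$, and the case $n<0$ is the original hypothesis applied to $-n$, so $A$ is a wandering set of positive measure, contradicting $(i)$. For $(ii)\Rightarrow(iii)$ comes the one step with real content: for any $A\in\cB$ the ``escaping part'' $B:=A\setminus\bigcup_{n\ge 1}T^{-n}A$ satisfies $B\cap T^{-n}B=\emptyset$ for every $n\ge 1$, because a point of $B$ lies in $A$ but in none of the $T^{-m}A$ with $m\ge 1$, hence not in $T^{-n}B\subseteq T^{-n}A$. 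If $\mu(B)>0$ then $(ii)$ applied to $B$ would give some $n>0$ with $\mu(B\cap T^{-n}B)>0$, a contradiction; so $\mu(B)=0$, which is exactly $(iii)$.

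For $(iii)\Rightarrow(iv)$, given $C$ with $T^{-1}C\subseteq C$ set $A:=C\setminus T^{-1}C$; iterating the inclusion yields $T^{-n}C\subseteq T^{-1}C$ for all $n\ge 1$, so $T^{-n}A\subseteq T^{-1}C$ and therefore $A\cap T^{-n}A\subseteq(C\setminus T^{-1}C)\cap T^{-1}C=\emptyset$. Thus $A\setminus\bigcup_{n\ge 1}T^{-n}A=A$, and $(iii)$ forces $\mu(C\setminus T^{-1}C)=0$. For $(iv)\Rightarrow(i)$, given a wandering set $W$ put $C:=\bigcup_{n\ge 0}T^{-n}W$, so that $T^{-1}C=\bigcup_{n\ge 1}T^{-n}W\subseteq C$; since $W\subseteq C$ and $\mu(W\cap T^{-n}W)=\mu\big(T^{-n}(T^{n}W\cap W)\big)=0$ for $n\ge 1$, we obtain $\mu(C\setminus T^{-1}C)\ge\mu\big(W\setminus\bigcup_{n\ge 1}T^{-n}W\big)=\mu(W)$, and $(iv)$ makes the left side $0$, so $\mu(W)=0$.

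The only genuinely substantive point is $(ii)\Rightarrow(iii)$: one must spot that the escaping part $B$ is disjoint from all of its positive backward translates, so that conservativity in the form $(ii)$ can be tested directly on $B$; everything else is bookkeeping with set inclusions plus a single appeal to non-singularity in $(i)\Rightarrow(ii)$ and in $(iv)\Rightarrow(i)$. A heavier alternative would be to first prove the Hopf decomposition $X=\mathcal{C}\sqcup\mathcal{D}$ into conservative and dissipative parts and read all four conditions off it, but the direct cyclic argument above is cleaner.
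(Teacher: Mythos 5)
Your chain of implications is correct. The paper does not give its own proof of this proposition; it is cited verbatim from Danilenko--Silva \cite[Prop.~2.1]{DanilenkoSilva2023}, so there is no in-paper argument to compare against. Your argument is the standard one, and each step holds up: the contrapositive of $(i)\Rightarrow(ii)$ uses non-singularity of $T^{n}$ to convert $\mu(A\cap T^{-n}A)=0$ into $\mu(T^{n}A\cap A)=0$ and handles negative $n$ directly; the escaping set $B=A\setminus\bigcup_{n\ge1}T^{-n}A$ in $(ii)\Rightarrow(iii)$ is indeed disjoint from all of its backward translates, so $(ii)$ applied to $B$ forces $\mu(B)=0$; the set $A=C\setminus T^{-1}C$ in $(iii)\Rightarrow(iv)$ has $T^{-n}A\subseteq T^{-1}C$ for $n\ge1$, hence $A$ equals its own escaping part; and in $(iv)\Rightarrow(i)$ the saturation $C=\bigcup_{n\ge0}T^{-n}W$ satisfies $\mu(C\setminus T^{-1}C)\ge\mu(W)$ because the backward translates of $W$ meet $W$ in null sets. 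No gaps; this is exactly how the equivalence is usually proved, and it is lighter than routing through the Hopf decomposition as you note.
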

\begin{definition*} If a system  $(X, [\mu], T)$ satisfies the equivalent conditions from Proposition \ref{ConservativeAll}, then it is called \emph{conservative}, and $T$ is called a \emph{conservative transformation}.
A non-singular action of a group $\Lambda$ on a standard probability space $(X, \mu)$ is \emph{elementwise conservative} (e.c.) if every element of $\Lambda$ acts by a conservative transformation.
\end{definition*}

\begin{remark*} Every nonsingular dynamical system $(X, [\mu], T)$ admits a unique, up to nullsets, decomposition $X = C \sqcup D$ into $T$-invariant Borel sets (called \emph{Hopf decomposition}) such that $D = \bigsqcup_{n \in \Z} T^n W$ for some wandering set $W \subset X$ and such that $T|_C$ is conservative (\cite[Thm.~2.2]{DanilenkoSilva2023}). The set $C$ is then called the \emph{conservative part} of $X$.
\end{remark*}
\end{no}

\begin{no}
\begin{example*}\label{ConservativeExample} Let $(X, [\mu], T)$ be a non-singular system.
\begin{enumerate}[(i)]
\item If $T$ is \emph{periodic}, in the sense that almost every $T$-orbit is finite, then $T$ is conservative.
\item If $\mu$ is non-atomic and $T$ is ergodic, then $T$ is conservative. In particular, if $T$ is ergodic and of Krieger type $\II_\infty$ or $\III_\lambda$ for some $\lambda \in [0,1]$, then $T$ admits a weakly wandering set of positive measure by Proposition \ref{WWTypes1}, but no wandering set of positive measure by Proposition \ref{ConservativeAll}.
\item If $X$ admits an (essentially) disjoint decomposition $X = \bigsqcup X_i$ into finitely many $T$-invariant pieces $X_i$, then $T$ is conservative if each each of the restrictions $T|_{X_i}$ is conservative.
\item It follows from (i)-(iii) that if $\mu$ is non-atomic and $X$ decomposes into a part on which $T$ acts periodically and finitely many subsets on which $T$ acts ergodically, then $T$ is conservative.
\end{enumerate}
\end{example*}
\begin{remark*} It $\mu$ has atoms, then a non-singular ergodic transformation of $(X, [\mu])$ need not be conservative. Namely, if $T$ denotes the shift on $\bZ$ and $\mu$ is equivalent to the counting measure, then $T$ is ergodic, but every singleton is a wandering set for $T$. 
This is essentially the only example of an ergodic transformation that is not conservative.
However, if $T$ is \emph{recurrent} in the sense that for every $A \in \cB$ with $\mu(A)>0$ and almost every $x \in X$ there exists $n > 0$ with $T^nx \in A$, then it is always conservative (and ergodic). By Poincar\'e recurrence, this is the case for any probability measure preserving ergodic transformation. Conversely, every ergodic and conservative transformation is recurrent.
\end{remark*}
\end{no}

\begin{no}
We record the following consequence of Example \ref{ConservativeExample}.(iv) for later use:
\begin{proposition*}[Conservativity criterion]\label{Conservativity} Assume that $(X, [\mu], T)$ is an ergodic non-singular system and that $\mu$ is non-atomic. If $g \in [\cR_T]$ and there exists $k \in \mathbb{N}$ such that
almost every $T$-orbit contains at most $k$ infinite $g$-orbits, then $g$ is conservative.
\end{proposition*}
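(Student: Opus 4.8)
The plan is to reduce the statement to Example~\ref{ConservativeExample}: I will decompose $X$, up to nullsets, into a part on which $g$ acts periodically and at most $k$ further $g$-invariant pieces on which $g$ acts ergodically.

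Fix a Borel representative of $g$. Let $X_{\mathrm{fin}} = \{x \in X : \langle g\rangle x \text{ is finite}\}$ and $X_{\mathrm{inf}} = X\setminus X_{\mathrm{fin}} = \bigcap_{n\geq 1}\{x : g^n(x)\neq x\}$; both are Borel and $g$-invariant. On $X_{\mathrm{fin}}$ the transformation $g$ is periodic, hence conservative by Example~\ref{ConservativeExample}.(i). By Example~\ref{ConservativeExample}.(iv) it therefore suffices to show that $X_{\mathrm{inf}}$ can be partitioned, modulo nullsets, into finitely many $g$-invariant pieces on which $g$ acts ergodically; indeed, each such piece carries a non-atomic measure (a restriction of $\mu$), so $g$ is conservative there by Example~\ref{ConservativeExample}.(ii), and then Example~\ref{ConservativeExample}.(iv) applies to the decomposition of $X$ into $X_{\mathrm{fin}}$ and those pieces.

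The heart of the argument is a bound on the number of ergodic pieces. I claim there are no $k+1$ pairwise disjoint $g$-invariant subsets $Z_1,\dots,Z_{k+1}\subseteq X_{\mathrm{inf}}$ of positive measure. Suppose there were. Since $\cR_T$ is ergodic, the $\cR_T$-saturation $\bigcup_{n\in\bZ}T^n Z_j$ of each $Z_j$ is conull; intersecting over $j$, for almost every $x$ the class $[x]_{\cR_T}$ meets every $Z_j$. For such $x$, choosing $y_j \in [x]_{\cR_T}\cap Z_j$, the orbit $\langle g\rangle y_j$ is infinite (as $y_j\in X_{\mathrm{inf}}$), is contained in $[x]_{\cR_T}$ (as $g\in[\cR_T]$), lies inside $Z_j$, and hence the orbits $\langle g\rangle y_1,\dots,\langle g\rangle y_{k+1}$ are pairwise disjoint. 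Thus $[x]_{\cR_T}$ contains at least $k+1$ infinite $g$-orbits for almost every $x$, contradicting the hypothesis. Given the claim, one starts with the single piece $X_{\mathrm{inf}}$ and repeatedly splits any piece that is not $g$-ergodic into two $g$-invariant pieces of positive measure; at every stage the pieces form a partition of $X_{\mathrm{inf}}$ into at most $k$ $g$-invariant positive-measure sets, and each split strictly increases the number of pieces, so the process terminates after at most $k-1$ splits with all pieces $g$-ergodic.

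I expect the only mildly delicate points to be the bookkeeping with the Borel representative and the saturations in the proof of the claim (so that ``$[x]_{\cR_T}$ meets $Z_j$'' and ``$\langle g\rangle y_j\subseteq[x]_{\cR_T}$'' hold simultaneously for almost every $x$, after discarding a $T$-invariant nullset), together with the observation that the a priori bound from the claim forces the ergodic decomposition of $X_{\mathrm{inf}}$ to be finite; everything else is a direct invocation of the facts collected in Example~\ref{ConservativeExample} and of the ergodicity of $\cR_T$.
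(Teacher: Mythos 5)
Your proof is correct and takes essentially the same route as the paper: both decompose $X$ into a periodic part and a non-periodic part, use ergodicity of $T$ to show every $T$-orbit meets every positive-measure $g$-invariant subset of the non-periodic part (forcing at most $k$ such disjoint pieces), and then invoke the same reduction to periodic plus finitely many ergodic pieces. Your explicit ``split until ergodic'' termination argument merely fills in a step the paper states without proof (``This implies that we can decompose $X_{np}$ into at most $k$ subsets on which $g$ acts ergodically'').
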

\begin{proof} Choose $k$ such that every $T$-orbit contains at most $k$ infinite $g$-orbits and partition $X = X_p \sqcup X_{np}$ into $g$-invariant measurable sets with $X_p$ containing all the finite $g$-orbits and $X_{np}$ containing all the infinite ones. By Example \ref{ConservativeExample}.(i) we may assume that $\mu(X_{np}) > 0$. 

Assume that $X_{np} = X_1 \sqcup X_2 \sqcup \ldots \sqcup X_L$ is a partition into $g$-invariant sets of positive measure. By ergodicity of $T$, almost every $T$-orbit $\cO$ intersects every $X_i$, and the intersection $\cO \cap X_i$ then contains at least one $g$-orbit $\cO_i$, which is infinite by construction of $X_{np}$. Then $\cO_1, \dots, \cO_L$ are infinite $g$-orbits in $\cO$, and thus $L \leq k$ by assumption. This implies that we can decompose $X_{np}$ into at most $k$ subsets on which $g$ acts ergodically, and hence the proposition follows from Example \ref{ConservativeExample}.(iv).
\end{proof}
\end{no}

\subsection{The Chabauty space and boomerang subgroups}\label{sect:chabauty}

\begin{no}
If $H$ is a locally compact second countable group, then the space $\Sub(H)$ of closed subgroups of $H$ carries a canonical compact metrizable topology known as the \emph{Chabauty--Fell topology}, which is invariant under the $H$-action by conjugation and hence gives rise to a topological dynamical system $H \curvearrowright \Sub(H)$. We will only need this topology in the case of a countable discrete group $\Lambda$, in which case it coincides with the restriction of the product topology via the embedding $\Sub(\Lambda) \subset \{0,1\}^\Lambda$, which can be shown to have closed image. In particular, for a countable discrete group $\Lambda$ the \emph{Chabauty space} $\Sub(\Lambda)$ is totally disconnected.

If $\Lambda \curvearrowright X$ is a Borel action on a standard Borel space, then the stabilizer map $\mathrm{Stab}\colon X \to \Sub(\Lambda)$ is Borel. In particular, if the stabilizer map is injective, then it is a Borel isomorphism onto its image. More generally, if $(X, \mu)$ is a standard probability space and $\Lambda < \Aut(X,[\mu])$, then we say that $\Lambda$ is \emph{totally-non-free} if there is a full measure subset $X' \subset X$ such that $\Stab \colon X' \to \Sub(H)$ is injective; this then implies that $\mathrm{Stab}$ induces an isomorphism $(X, \mu) \to (\Sub(\Lambda), \Stab_*\mu)$.

\end{no}
\begin{no} From now on $\Lambda$ denotes a countable discrete group. We recall from the introduction that a subgroup $\Delta \in \mathrm{Sub}(\Lambda)$ is a boomerang subgroup if for every $\gamma \in \Lambda$ there exists a sequence $n_k \to \infty$ with $\gamma^{n_k} \Delta \gamma^{-n_k} \to \Delta$ in $\mathrm{Sub}(\Lambda)$. Elementary examples of boomerang subgroups are
\begin{enumerate}
    \item normal subgroups,
    \item finite index subgroups,
    \item more generally, subgroups whose normalizer has finite index,
    \item more generally, subgroups such that every group element has a power in the normalizer.
\end{enumerate}
All of these examples of boomerang subgroups $\Delta < \Lambda$ have in common that for every $\gamma \in \Lambda$, the $\langle \gamma \rangle$-orbit of $\Delta$ is finite. In fact it is not hard to show that this is necessarily the case whenever $\Delta$ is finitely generated, or whenever $\Delta$ is not in the perfect kernel of $\Sub(\Lambda)$.
\end{no}
\begin{no}
The topology of $\Sub(\Lambda)$ admits the following equivalent description, which is more convenient for our purposes. For any finite subset $F \subset  \Lambda$, and any $\Delta \in \Sub(\Lambda)$, we denote
    \begin{enumerate}
        \item $\Env(F) \coloneqq  \{H \in \Sub(\Lambda) \mid F \subset H \}$
        \item $\Miss(F) \coloneqq  \{H \in \Sub(\Lambda) \mid F \cap H = \emptyset \}$
        \item $\Nbh(\Delta,F) \coloneqq  \{H \in \Sub(\Lambda) \mid F \cap H = F \cap \Delta\} = \Env(F \cap \Delta) \cap \Miss(F \setminus \Delta)$.
    \end{enumerate}
    Then, as $F$ varies over all finite subsets of $\Lambda$
    the sets $\Env(F)$ and $\Miss(F)$ form a basis for the topology on $\Sub(\Lambda)$ and the sets $\Nbh(\Delta,F)$ constitute a neighborhood basis of $\Delta$ in $\Sub(\Lambda)$. This implies that $\Delta \subset \Lambda$ is a boomerang if and only if for every $\gamma \in \Lambda$ and every finite subset $F \subset \Lambda$ there exists $n > 0$ with $\gamma^n \Delta \gamma^{-n} \in \Nbh(\Delta,F)$, i.e.
\[
\gamma^n \Delta \gamma^{-n} \cap F = \Delta \cap F.
\]
Using this terminology we prove the following lemma promised in Remark \ref{rem:golb_conservativity}, which can be seen as a consequence of the ``locally essential lemma'' (\cite[Lemma 2.3]{Gla:lin_irs})
\begin{lemma*} \label{lem:whole_gp}
Let $\Lambda$ be a countable group. Then 
the conjugation action $\Lambda \curvearrowright \Sub(\Lambda)$ is globally conservative with respect to any non-singular probability measure $\mu$ on $\Sub(\Lambda)$.  
\end{lemma*}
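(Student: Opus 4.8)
The plan is to show that for any positive-measure set $\cU \subset \Sub(\Lambda)$ there is some $1 \neq \gamma \in \Lambda$ with $\mu(\cU \cap \gamma \cU) > 0$, where $\gamma \cU$ denotes the image of $\cU$ under conjugation by $\gamma$. The key point is that the conjugation action fixes the trivial subgroup $\{1\}$, so if $\{1\}$ lies in the support of $\mu$ the statement is immediate by taking any $\gamma$ and a sufficiently small neighborhood of $\{1\}$; the real work is the case where $\mu$ gives positive mass only to nontrivial subgroups. First I would use the basis description from the paragraph above: since the sets $\Nbh(\Delta, F)$ form a neighborhood basis, and since $\mu(\cU) > 0$, there exist $\Delta \in \cU$ and a finite set $F \subset \Lambda$ with $\mu(\cU \cap \Nbh(\Delta, F)) > 0$; replacing $\cU$ by this smaller set we may assume $\cU \subset \Nbh(\Delta, F)$ for a fixed $\Delta$ and fixed $F$, and moreover (shrinking $F$ if necessary, or just by assumption) that $\cU \subset \Miss(\{1\})$ is not concerned with the trivial group. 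Since $\Delta$ is nontrivial, pick $1 \neq \delta \in \Delta$; I would like to conjugate by $\delta$ and show $\mu(\cU \cap \delta \cU \delta^{-1}) > 0$.

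The heart of the argument is the following: for every $H \in \Nbh(\Delta, F)$ with $\delta \in H$, the conjugate $\delta H \delta^{-1}$ again meets $F$ exactly where $\Delta$ does — at least on the relevant finite window — so conjugation by $\delta$ approximately preserves the neighborhood $\Nbh(\Delta, F)$. Concretely, I would argue that if $H \in \Nbh(\Delta, F)$ then $H \ni \delta$ (since $\delta \in \Delta \cap F$ provided we enlarge $F$ to contain $\delta$), hence $\delta^{-1} H \delta$ still contains $\delta$ and — because conjugation by $\delta$ is a homeomorphism fixing $\Delta$ and moving neighborhoods to neighborhoods — one gets that the measure-positive set $\cU$ and its $\delta$-translate both concentrate near $\Delta$. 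The cleanest route is probably the "locally essential lemma" cited in the statement (\cite[Lemma 2.3]{Gla:lin_irs}): applied to the finite set $F$ and the positive-measure set $\cU \subset \Nbh(\Delta,F)$, it produces a group element $\gamma$ lying in $\Delta$ (so $\gamma \neq 1$) such that the conjugation by $\gamma$ keeps a positive-measure subset of $\cU$ inside $\cU$, which is exactly global conservativity for the element $\gamma$.

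The main obstacle I anticipate is bookkeeping with the finite window $F$: conjugation by $\delta$ does not literally fix $\Nbh(\Delta, F)$ but only $\Nbh(\Delta, \delta F \delta^{-1} \cup F)$-type sets, so one must be careful to first localize $\mu$ on a neighborhood small enough that the relevant finite test set is stable, and to ensure the chosen $\delta$ (or $\gamma$) is genuinely nontrivial — which is where the hypothesis that we have reduced to subgroups $\Delta$ avoiding the trivial group, equivalently $\Delta \neq \{1\}$, is used. Once the localization is set up correctly, positivity of $\mu(\cU \cap \gamma\cU\gamma^{-1})$ follows because $\mu$ is non-singular: conjugation by $\gamma$ has Radon--Nikodym derivative, so it cannot send a positive-measure subset of $\cU$ entirely off $\cU$ while $\cU$ still has positive measure and $\gamma \cU$ concentrates on the same neighborhood of $\Delta$. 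I would close by invoking Proposition \ref{ConservativeAll}(ii): exhibiting a single such $\gamma$ for every positive-measure $\cU$ is precisely the definition of global conservativity, so no further iteration is needed.
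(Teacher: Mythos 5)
Your proposal is in the right ballpark but misses the one observation that makes the proof trivial, and as a result you invent bookkeeping that isn't needed and leave a real gap. The crucial fact is that conjugation by $\gamma$ fixes \emph{every} subgroup containing $\gamma$ \emph{exactly}: if $\gamma \in H$ then $\gamma H \gamma^{-1} = H$. So $\gamma$ acts as the identity on $\Env(\{\gamma\})$ — not ``approximately preserves a neighborhood,'' as you write. Once you see this, the entire argument becomes: decompose $\mu = \lambda\delta_{\{1\}} + (1-\lambda)\mu'$ with $\mu'(\{\{1\}\}) = 0$; if $\mu'(B) = 0$ the atom at $\{1\}$ forces $\{1\} \in B$ and then any $\gamma$ works; otherwise $\mu'$ is carried by $\bigcup_{\gamma\neq 1}\Env(\{\gamma\})$, so some $\gamma \neq 1$ has $\mu'(B \cap \Env(\{\gamma\})) > 0$, and $\gamma$ fixes $B \cap \Env(\{\gamma\})$ pointwise, giving $\mu(\gamma B\gamma^{-1}\cap B) \geq \mu'(B\cap\Env(\{\gamma\}))>0$. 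No localization onto a basic neighborhood, no ``stabilizing the finite test set $F$,'' and no appeal to non-singularity or Radon--Nikodym is required.

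Your write-up has two concrete errors. First, $\Miss(\{1\}) = \{H : 1 \notin H\} = \emptyset$, since every subgroup contains the identity; restricting to $\Miss(\{1\})$ is vacuous and does not accomplish the exclusion you want. Second, ``if $\{1\}$ lies in the support of $\mu$ the statement is immediate by taking any $\gamma$ and a sufficiently small neighborhood of $\{1\}$'' is false: a point can lie in the support without being an atom, and conjugation by $\gamma$ fixes only the point $\{1\}$, not a Chabauty neighborhood of it. The correct dichotomy is whether $\mu(\{\{1\}\})>0$, not whether $\{1\}$ is in the support. Finally, invoking the cited ``locally essential lemma'' as a black box is risky: the paper mentions it only as motivation and then gives the elementary self-contained argument above; your proof should do the same rather than defer the heart of the matter to an unquoted external statement.
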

\begin{proof} Let $B \subset \Sub(\Lambda)$ with $\mu(B) > 0$. We need to show that there exists $\gamma \in \Lambda$ such that $\mu(\gamma B \gamma^{-1} \cap B) > 0$. Write $\mu = 
\lambda \delta_{\{1\}} + (1-\lambda) \mu'$ with $\mu'(\{\{1\}\}) = 0$ and $\lambda \in [0,1]$. The statement is obvious if $\mu'(B) = 0$, hence we assume that $\mu'(B) > 0$.

By definition, $\mu'$ is supported on the countable union $\bigcup_{\gamma \ne 1} \Env(\{\gamma\})$, hence we can choose $\gamma \in \Lambda \setminus \{1\}$ such that $B' := \Env(\{\gamma\}) \cap B$ satisfies $\mu'(B') > 0$. Now $\gamma$ acts trivially on $\Env(\{\gamma\})$, hence on $B'$, and thus $\mu(\gamma B \gamma^{-1} \cap B)  \geq \mu'(\gamma B \gamma^{-1} \cap B) \geq \mu'(B') > 0$.
\end{proof}
\end{no}

\section{Basics on representations into measurable full groups}
\label{sect:basics}

In this section $\Lambda$ is any countable group,
$\cR$ a hyperfinite Borel equivalence relation on $(X,\cB,[\mu])$ with some generator $T \in \Aut(X,[\mu])$ and $G = [\cR]$ denotes the corresponding measurable full group.

\subsection{$G_\delta$-properties of generic representations}

\begin{no}
Throughout this article we will establish that various sets of (constrained) representations are dense $G_\delta$-sets. The $G_\delta$-property is usually easy to establish, and one can use the following general principle:
\begin{proposition*} \label{prop:G_del}
If $\Lambda$ is a countable group and $P \subset \Sub(\Lambda)$ is $G_{\delta}$, then also
$$\Xi(P) \coloneqq  \{\rho \in \Hom(\Lambda,G) \ | \ \mu(\{x \ | \ \Lambda_x \in P\}) = 1\} \subset \Hom(\Lambda,G)$$ is $G_\delta$,
where $\Lambda_x$ is the stabilizer of $x$ in $\Lambda$ with respect to the action induced by $\rho$.
\end{proposition*}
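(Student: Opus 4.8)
The plan is to reduce, using the stability of the class of $G_\delta$-sets under countable intersections, first to the case where $P$ is open, and then to the observation that a certain $[0,1]$-valued function on $\Hom(\Lambda,G)$ is lower semicontinuous.

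First I would record the functional-analytic inputs. For every $w \in \Lambda$ the evaluation $\rho \mapsto \rho(w)$ is continuous, and the map $g \mapsto \Fix(g) := \{x : g(x) = x\}$ is $1$-Lipschitz from $([\cR],d)$ to the measure algebra $\malg(X,[\mu])$, because $\Fix(g) \triangle \Fix(h) \subset \{x : g(x) \neq h(x)\}$. Hence $\rho \mapsto \Fix(\rho(w)) \in \malg(X,[\mu])$ is continuous. Since finite unions, finite intersections and complementation are (Lipschitz) continuous on $\malg(X,[\mu])$, and $\mu \colon \malg(X,[\mu]) \to [0,1]$ is $1$-Lipschitz, it follows that for every finite Boolean combination $V$ of sets $\Env(F)$ and $\Miss(F)$ (with $F \subset \Lambda$ finite) the assignment $\rho \mapsto \Stab_\rho^{-1}(V)$, which is the corresponding Boolean combination of the $\Fix(\rho(w))$, is a continuous map $\Hom(\Lambda,G) \to \malg(X,[\mu])$, and consequently $\rho \mapsto \mu(\Stab_\rho^{-1}(V))$ is continuous. (All of this is independent of the choice of Borel representatives of the $\rho(w)$, since $\Stab_\rho^{-1}(V)$ is well defined in $\malg(X,[\mu])$; this is also what makes $\Xi(P)$ well defined, the set $\{x : \Lambda_x \in P\}$ being Borel by Borelness of the stabilizer map.)

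Next comes the reduction. Write $P = \bigcap_n U_n$ with the $U_n$ open in $\Sub(\Lambda)$. Then $\{x : \Lambda_x \in P\} = \bigcap_n \{x : \Lambda_x \in U_n\}$ is a countable intersection of Borel sets, so it is $\mu$-conull if and only if each $\{x : \Lambda_x \in U_n\}$ is; equivalently $\Xi(P) = \bigcap_n \Xi(U_n)$. As a countable intersection of $G_\delta$-sets is $G_\delta$, it suffices to treat the case $P = U$ open. Fix such a $U$ and write $U = \bigcup_{j \in \bN} V_j$ with the $V_j$ ranging over a countable basis of $\Sub(\Lambda)$ consisting of sets of the form $\Env(F) \cap \Miss(F')$. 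Put $\phi_N(\rho) := \mu\big(\Stab_\rho^{-1}(V_1) \cup \cdots \cup \Stab_\rho^{-1}(V_N)\big)$; by the first step $\phi_N$ is continuous, and by continuity of the measure from below $\sup_N \phi_N(\rho) = \mu(\Stab_\rho^{-1}(U)) = \mu(\{x : \Lambda_x \in U\})$. Hence $\Xi(U) = \{\rho : \sup_N \phi_N(\rho) = 1\}$, and because $\phi_N \le 1$ for all $N$ this set equals $\bigcap_{k \ge 1} \bigcup_{N} \{\rho : \phi_N(\rho) > 1 - 1/k\}$, a countable intersection of open sets, hence $G_\delta$.

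The only point requiring care is the passage to open $U$: an open subset of $\Sub(\Lambda)$ is only a countable (not finite) union of basic sets, so $\rho \mapsto \mu(\Stab_\rho^{-1}(U))$ is in general merely lower semicontinuous, and one cannot realize $\Xi(U)$ as the preimage of a point under a continuous map; what saves the day is exactly that the $1$-level set of a $[0,1]$-valued lower semicontinuous function is automatically $G_\delta$. The rest is routine bookkeeping with the measure algebra and the explicit countable basis of the Chabauty space.
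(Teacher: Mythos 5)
Your proof is correct, and it reaches the same underlying fact as the paper's argument --- lower semicontinuity of $\rho \mapsto \mu(\{x : \Lambda_x \in U\})$ for $U$ Chabauty-open --- but by a cleaner and more modular route. Both arguments first reduce from a $G_\delta$ set $P$ to the case of open sets: you via $\Xi(\bigcap_n U_n) = \bigcap_n \Xi(U_n)$, the paper via the double intersection $\Xi(P) = \bigcap_{n,i} \Xi(P_i, 1/n)$; these are equivalent observations. Where you diverge is in establishing that $\Xi(U,\epsilon) = \{\rho : \mu(\{x : \Lambda_x \in U\}) > 1-\epsilon\}$ is open. The paper works directly on the base: fix $\rho$, introduce a pointwise ``modulus of openness'' $n(x,\rho)$ (the least $n$ with $\Nbh(\Lambda_x, B_n) \subset U$), uniformize it to a global $N$ valid off a set of small measure, and check that a small perturbation of $\rho$ in the uniform metric keeps $\Lambda_x \cap B_N$ unchanged for most $x$, hence keeps $\mu(\Stab_\rho^{-1}(U))$ above $1-\epsilon$. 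You instead isolate the continuity in one clean lemma: since $\Fix(g)\triangle\Fix(h)\subset\{x: g(x)\neq h(x)\}$, the map $g \mapsto \Fix(g)$ is $1$-Lipschitz into $\malg(X,[\mu])$, so $\rho \mapsto \mu(\Stab_\rho^{-1}(V))$ is continuous for each basic $V = \Env(F)\cap\Miss(F')$, and then $\mu(\Stab_\rho^{-1}(U)) = \sup_N \phi_N(\rho)$ is a supremum of continuous functions. The two proofs buy slightly different things: the paper's gives the openness of $\Xi(U,\epsilon)$ directly and concretely, at the cost of a somewhat delicate bookkeeping with $\eta$, $N(\rho)$, and the neighborhood $O$; yours packages the same phenomenon through the measure-algebra structure, recovers openness of $\Xi(U,\epsilon)$ as $\bigcup_N\{\phi_N>1-\epsilon\}$, and has noticeably less room for slips. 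Both are fully valid.
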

\begin{proof}
It suffices to show that for every open subset $P \subset \Sub(\Lambda)$ and every $\epsilon > 0$ the set  
$$\Xi(P,\epsilon) \coloneqq  \{\rho \in \Hom(\Lambda,G) \ | \ \mu(\{x \ | \ \Lambda_x \in P\}) > 1 - \epsilon \}$$
is open. Indeed, if $P$ is $G_\delta$ then we can realize it as a descending intersection of open sets $P = \bigcap_{i \in \bN}P_i$ and $\Xi(P) =  \bigcap_{n,i \in \bN} \Xi(P_i,1/n)$.  Hence we assume that $P$ is open and fix $\rho \in \Xi(P,\epsilon)$. 

Let $\Sigma \in \Sub(\Lambda)$ and express $\Lambda = \bigcup B_n$ as an ascending union of finite sets $B_n$. 
Then the sets $\Nbh(\Sigma,B_n) = \{\Delta \in \Sub(\Lambda) \ | \ \Sigma \cap B_n = \Delta \cap B_n\}$
 form a local basis of Chabauty open neighborhoods around $\Sigma$. 
 
 Now let $x \in X$. If $\Lambda_x \not \in P$ we set $n(x,\rho) \coloneqq \infty$. Otherwise (using that $P$ is open) there exists some 
$n = n(\rho,x)$ such that 
$\Lambda_x \in \Nbh(\Lambda_x,B_n) \subset P$. By definition of $\Xi(P,\epsilon)$ we find $0 < \eta = \eta(\rho) < \epsilon/2$ such that
$n(x,\rho) = \infty$ on a set of measure at most $\epsilon - 2 \eta$. Thus
we can fix one global $N = N(\rho) \in \bN$ such that 
$$\mu \left(B(\rho,N)\right) = 1 - \epsilon + \eta > 1-\epsilon, \qquad {\text{where }} B(\rho,N) = \{x \in X \ | \ U(\Lambda_x,B_N) \subset P\}.$$
Now choose an open neighborhood $\rho \in O \subset \Hom(\Lambda,G)$ small enough to satisfy
$$\mu \left(\{x \in X \mid \forall \rho' \in O\, \forall \gamma \in B_N: \; \rho(\gamma)x \ne \rho'(\gamma)x \ \} \right) < \frac{\eta}{\abs{B_N}}.$$
Thus, for any $\rho' \in O$ we have
\begin{eqnarray*}
\mu \left(B(\rho',N(\rho'))\right) & \ge & \mu \left(B \left(\rho,N(\rho)\right) \cup \bigcup_{\gamma \in B_N} \{x \ | \ \rho(\gamma)x \ne \rho'(\gamma)x \} \right) \\
& > &  1-\epsilon + \eta - \sum_{\gamma \in B_N} \frac{\eta}{\abs{B_N}} > 1-\epsilon,
\end{eqnarray*}
hence $\rho \in O \subset \Xi(P, \eps)$.
\end{proof}
If we specialize to the case of a countable non-abelian free group $\Gamma$ and use the fact that $\Hom_T(\Gamma,G)$ is a closed, and hence $G_{\delta}$, subset of $\Hom(\Gamma,G)$, then we obtain:
\begin{corollary*} \label{cor:G_del}
If $P \subset \Sub(\Gamma)$ is $G_{\delta}$ then 
$$\Xi_T(P) \coloneqq  \Hom_T(\Gamma,G) \cap \Xi(P) = \{\rho \in \Hom_T(\Gamma,G) \ | \ \mu\{x \ | \ \rho(\Gamma)_x \in P\} = 1\},$$
is a $G_{\delta}$-subset of $\Hom_T(\Gamma,G)$. 
\end{corollary*}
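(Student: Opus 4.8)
This is an immediate consequence of Proposition \ref{prop:G_del}, so the plan is simply to transfer that statement to the closed subspace of $T$-constrained representations and to unwind the two descriptions of $\Xi_T(P)$. First I would recall from \S\ref{Ev} that $\Hom_T(\Gamma,G) = \mathrm{ev}_a^{-1}(\{T\})$; since $\mathrm{ev}_a$ is continuous and $G$ is Hausdorff, this is a closed subset of $\Hom(\Gamma,G)$, hence Polish in the subspace topology, and in particular $G_\delta$ in $\Hom(\Gamma,G)$ (a closed subset of a metrizable space is $G_\delta$).

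Next I would apply Proposition \ref{prop:G_del} with $\Lambda = \Gamma$: as $P \subseteq \Sub(\Gamma)$ is $G_\delta$, the set $\Xi(P) \subseteq \Hom(\Gamma,G)$ is $G_\delta$. By the very definition of $\Xi$, the group $\Gamma_x$ occurring in its description is the stabilizer of $x$ for the action induced by $\rho$, i.e.\ $\Gamma_x = \rho(\Gamma)_x$; thus $\Xi(P) = \{\rho \in \Hom(\Gamma,G) \mid \mu(\{x \mid \rho(\Gamma)_x \in P\}) = 1\}$, and intersecting with $\Hom_T(\Gamma,G)$ yields precisely the set $\Xi_T(P)$ as displayed in the statement, which settles the claimed equality of the two descriptions.

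It remains to note that $\Xi_T(P) = \Xi(P) \cap \Hom_T(\Gamma,G)$ is the trace on the subspace $\Hom_T(\Gamma,G)$ of a $G_\delta$ subset of $\Hom(\Gamma,G)$: writing $\Xi(P) = \bigcap_{i} U_i$ with $U_i$ open in $\Hom(\Gamma,G)$, the sets $U_i \cap \Hom_T(\Gamma,G)$ are open in $\Hom_T(\Gamma,G)$, so $\Xi_T(P)$ is a countable intersection of relatively open sets, hence $G_\delta$ in $\Hom_T(\Gamma,G)$. (Equivalently, one may combine the fact that $\Hom_T(\Gamma,G)$ is $G_\delta$ in $\Hom(\Gamma,G)$ with the stability of $G_\delta$-sets under finite intersection.) I do not expect any genuine obstacle here: all the real work is contained in Proposition \ref{prop:G_del}, and the corollary is the routine observation that both the $G_\delta$-property and the defining almost-sure condition pass cleanly to the closed subspace of $T$-constrained representations; the only points worth stating explicitly are the closedness of $\Hom_T(\Gamma,G)$ and the identification $\Gamma_x = \rho(\Gamma)_x$.
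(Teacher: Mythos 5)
Your argument is exactly the paper's: the paper derives the corollary immediately from Proposition \ref{prop:G_del} together with the observation that $\Hom_T(\Gamma,G)$ is a closed, hence $G_\delta$, subset of $\Hom(\Gamma,G)$. You have simply spelled out the routine steps (closedness via continuity of $\mathrm{ev}_a$, traces of $G_\delta$-sets on subspaces are $G_\delta$) that the paper leaves implicit.
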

\end{no}

\begin{no}
    The following proposition shows that various properties of interest to us are $G_\delta$ in the Chabauty space. Here and in the sequel we use the symbol $\subset_f$ to denote finite subsets; given a subgroup $\Delta < \Lambda$ we denote by $\pi_\Delta\colon \Lambda \to \Lambda/\Delta$ the canonical quotient map.
\begin{proposition*}\label{prop:Gdelta properties in the Chabauty}
If $\Lambda$ is a countable group, then the sets $\Sub^{\text{cf}}(\Lambda)$, $\Sub^{\text{coa}}(\Lambda)$ and $\Sub^{\text{cht}}(\Lambda)$ of core free subgroups, co-amenable subgroups and co-highly transitive subgroups respectively are
$G_{\delta}$-subsets of $\mathrm{Sub}(\Lambda)$.    
\end{proposition*}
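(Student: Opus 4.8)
The plan is to reduce all three assertions to one elementary observation: for every $g \in \Lambda$ the sets $\Env(\{g\}) = \{\Delta : g \in \Delta\}$ and $\Miss(\{g\}) = \{\Delta : g \notin \Delta\}$ are clopen in $\Sub(\Lambda)$, and consequently, for any fixed finite tuple $(\gamma_1, \dots, \gamma_m)$ of elements of $\Lambda$ and any fixed $F \subset_f \Lambda$, the map sending $\Delta$ to the finite ``coincidence pattern'' $\big(\mathbf 1[\gamma_j^{-1}g\gamma_i \in \Delta]\big)_{i,j \le m,\, g \in F \cup \{1\}}$ is locally constant. Granting this, each of $\Sub^{\text{cf}}(\Lambda)$, $\Sub^{\text{coa}}(\Lambda)$, $\Sub^{\text{cht}}(\Lambda)$ will be written as a countable intersection of open sets, each of which is a countable union of clopen sets cut out by finitely many membership conditions; since $\Sub(\Lambda)$ is second countable, all the unions and intersections involved are genuinely countable, so the Baire-category formalism yields the $G_\delta$ conclusion.

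First, core-freeness. Since $\delta$ lies in the core $\bigcap_{\gamma}\gamma\Delta\gamma^{-1}$ precisely when $\gamma^{-1}\delta\gamma \in \Delta$ for all $\gamma$, we have $\Sub^{\text{cf}}(\Lambda) = \bigcap_{1 \ne \delta \in \Lambda}\bigcup_{\gamma \in \Lambda}\Miss(\{\gamma^{-1}\delta\gamma\})$, which is manifestly $G_\delta$. Next, co-high-transitivity: writing $\Sub^{(k)}(\Lambda)$ for the set of $\Delta$ such that $\Lambda \curvearrowright \Lambda/\Delta$ is $k$-transitive, we have $\Sub^{\text{cht}}(\Lambda) = \bigcap_{k \ge 1}\Sub^{(k)}(\Lambda)$, so it suffices to show each $\Sub^{(k)}(\Lambda)$ is $G_\delta$. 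Every $k$-tuple of distinct cosets is of the form $(\pi_\Delta(\gamma_1), \dots, \pi_\Delta(\gamma_k))$, so $k$-transitivity is equivalent to: for all $\vec\gamma, \vec\gamma' \in \Lambda^k$, if both $(\gamma_i\Delta)_i$ and $(\gamma'_i\Delta)_i$ have pairwise distinct entries, then some $g \in \Lambda$ satisfies $g\gamma_i\Delta = \gamma'_i\Delta$ for all $i$. Setting $D(\vec\gamma) := \bigcap_{i \ne j}\Miss(\{\gamma_i^{-1}\gamma_j\})$ (clopen) and $E_g(\vec\gamma,\vec\gamma') := \bigcap_i \Env(\{(\gamma'_i)^{-1}g\gamma_i\})$ (clopen), this reads
$$\Sub^{(k)}(\Lambda) = \bigcap_{\vec\gamma,\vec\gamma' \in \Lambda^k}\Big( \big(\Sub(\Lambda)\setminus D(\vec\gamma)\big) \cup \big(\Sub(\Lambda)\setminus D(\vec\gamma')\big) \cup \bigcup_{g \in \Lambda} E_g(\vec\gamma,\vec\gamma')\Big),$$
a countable intersection of open sets (the edge cases $[\Lambda:\Delta]<k$ are handled correctly, as then every $D(\vec\gamma)$ fails).

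The co-amenable case needs a touch more bookkeeping, and this is the step I expect to be the main (if mild) obstacle, since $\Lambda/\Delta$ varies with $\Delta$. By definition $\Delta$ is co-amenable iff for every $F \subset_f \Lambda$ and $n \ge 1$ the action $\Lambda \curvearrowright \Lambda/\Delta$ has an $(F,1/n)$-almost invariant set, i.e.\ a finite nonempty $S \subset \Lambda/\Delta$ with $\abs{gS \triangle S} < \tfrac1n\abs{S}$ for all $g \in F$; thus $\Sub^{\text{coa}}(\Lambda) = \bigcap_{F \subset_f \Lambda}\bigcap_{n \ge 1}U_{F,n}$, where $U_{F,n} := \{\Delta \in \Sub(\Lambda) : \Lambda/\Delta \text{ has an } (F,1/n)\text{-almost invariant set}\}$, and I must show each $U_{F,n}$ is open. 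I would parametrize finite subsets of $\Lambda/\Delta$ by tuples: for $\vec\gamma = (\gamma_1, \dots, \gamma_m) \in \Lambda^m$ set $S_{\vec\gamma,\Delta} := \{\gamma_1\Delta, \dots, \gamma_m\Delta\}$, noting every nonempty finite subset of $\Lambda/\Delta$ arises in this way; then $U_{F,n} = \bigcup_{m \ge 1}\bigcup_{\vec\gamma \in \Lambda^m}\{\Delta : S_{\vec\gamma,\Delta}\text{ is }(F,1/n)\text{-almost invariant}\}$. Using $g\gamma_i\Delta = \gamma_j\Delta \iff \gamma_j^{-1}g\gamma_i \in \Delta$ (and $\gamma_i\Delta = \gamma_j\Delta \iff \gamma_j^{-1}\gamma_i \in \Delta$), the quantities $\abs{S_{\vec\gamma,\Delta}}$, $\abs{gS_{\vec\gamma,\Delta}}$ and $\abs{gS_{\vec\gamma,\Delta} \cap S_{\vec\gamma,\Delta}}$ are all functions of the coincidence pattern of $\Delta$ relative to the finite element set $\{\gamma_j^{-1}\gamma_i\} \cup \{\gamma_j^{-1}g\gamma_i : g \in F\}$, which is locally constant by the opening observation. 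Hence each set $\{\Delta : S_{\vec\gamma,\Delta}\text{ is }(F,1/n)\text{-almost invariant}\}$ is clopen, so $U_{F,n}$ is open and $\Sub^{\text{coa}}(\Lambda)$ is $G_\delta$. In all three cases nothing is deep; the only thing to watch is the systematic replacement of ``finite subset of $\Lambda/\Delta$'' by ``$\pi_\Delta$-image of a fixed finite tuple in $\Lambda$'', after which every building block is clopen and the $G_\delta$ conclusions are automatic.
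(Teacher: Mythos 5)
Your proof is correct and follows the same basic strategy as the paper's: represent each property as a countable intersection of open sets built from the clopen basics $\Env(\cdot)$ and $\Miss(\cdot)$, using the fact that membership of any fixed finite set of elements in $\Delta$ is a locally constant function of $\Delta$. The core-free argument is identical. The one genuine difference is in the parametrization for co-amenability and co-high-transitivity: the paper works with finite subsets $A \subset_f \Lambda$ and explicitly imposes the injectivity condition $\Miss(A^{-1}A \setminus \{1\})$ on $\pi_\Delta|_A$, so that $|A|=|\pi_\Delta(A)|$; you instead range over arbitrary tuples and let the coincidence pattern dictate the cardinalities. Both work, and yours is marginally more elementary since it never needs to cut down to the injective locus. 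One small payoff of your route worth noting: for $\Sub^{\text{cht}}(\Lambda)$ you explicitly include the escape clause $\bigl(\Sub(\Lambda)\setminus D(\vec\gamma)\bigr) \cup \bigl(\Sub(\Lambda)\setminus D(\vec\gamma')\bigr)$, which correctly handles subgroups of finite index $< k$ (where $k$-transitivity holds vacuously); the paper's displayed formula $\bigcap_\Omega \bigcap_{\Omega'\subset\Omega}\bigcap_\sigma \bigcup_\gamma H(\cdot)$ silently forces $\pi_\Delta$ to be injective on every finite $\Omega'$, which as stated would shrink the set to $\{\{1\}\}$ -- a typo that your version avoids by keeping the disjunction explicit.
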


\begin{proof} Firstly, we can write 
$$\Sub^{\text{cf}}(\Lambda) = \bigcap_{1 \ne \gamma \in \Lambda} \left( \bigcup_{\eta \in \Lambda} \Miss(\{\eta \gamma \eta^{-1}\}) \right),$$
which is a countable intersection of Chabauty-open subsets of $\mathrm{Sub}(\Lambda)$.    

Secondly, given $A, F \subset_f \Lambda$ and $\epsilon > 0$ we denote by $\Omega_{A,F,\epsilon}$ the set of $\Delta \in \Sub(\Lambda)$ such that $\pi_\Delta \colon \Lambda \to \Lambda/\Delta$ is injective on $A$ and $\pi_\Delta(A) \subset \Lambda/\Delta$ is an $(F,\epsilon)$-F{\o}lner subset. Explicitly, the latter means that for all $\gamma \in \Lambda$ we have
$|\gamma \pi_\Delta(A) \triangle \pi_\Delta(A)|/| \pi_\Delta(A)| < \eps$, and by definition we have
\[
\Sub^{\text{coa}}(\Lambda) = \bigcap_{n \geq 1} \bigcap_{F \subset_f \Lambda} \bigcup_{A \subset_f \Lambda} \Omega_{A,F,1/n}.
\]
We thus fix $A,F \subset_f \Lambda$ and $\eps > 0$; we have to show that $\Omega_{A,F,\epsilon}$ is open.
Concerning the first condition, we note that
\[
\{\Delta \in \Sub(\Lambda) \mid \pi_\Delta|_{A} \text{ injective} \} = \Miss(A^{-1} A \setminus \{1\}),
\]
and for every $\Delta$ is in this set and all $\gamma \in \Lambda$ we have $|A|=|A/\Delta|= |\gamma A/\Delta|$.
Next, take any $\Delta \in \Sub(\Lambda)$ any $\gamma \in F, \omega,\omega' \in A$ and observe that $\gamma \omega' \Delta = \omega \Delta  \in \gamma A/\Delta \cap A/\Delta$ if and only if $\omega^{-1} \gamma \omega' \in \Delta$. So we see that
if $\Delta \in \Omega_{A,F,\epsilon}$ then also
\[
\Nbh(\Delta,A^{-1} F A \cup A^{-1} A) \subset \Omega_{A,F,\epsilon}.
\]
This shows that the set of co-amenable subgroups is $G_\delta$.

Finally, given $\gamma \in \Lambda$, $\Omega \subset_f \Lambda$ and $\sigma$ a permutation of $\Omega$ we set 
\[
    H(\gamma,\Omega,\sigma) \coloneqq \{\Delta \in \Sub(\Lambda) \ | \forall \omega \ne \omega' \in \Omega \colon \, \omega^{-1} \omega' \not \in \Delta, \, {\text{ and }} \forall \omega \in \Omega \colon \, \gamma \omega \Delta = \sigma(\omega) \Delta \}. 
\]
Then $\Delta \in H(\gamma, \Omega, \sigma)$ if and only if $\pi_\Delta$ is injective on $\Omega$ and $\gamma$ induces the permutation $\sigma$ on $\pi_\Delta(\Omega)$, and $H(\gamma,\Omega,\sigma)$ is Chabauty-open, since
\[  H(\gamma,\Omega,\sigma)= \Miss(\Omega \setminus \{1\}) \cap \bigcap_{\omega \in \Omega} \Env(\{\sigma(\omega)^{-1} \gamma \omega\})\]
The proposition then follows from the fact that 
\[\Sub^{\text{cht}}(\Lambda) = \bigcap_{\Omega \subset_f \Lambda} \left( \bigcap_{\Omega' \subset \Omega} \bigcap_{\sigma \in \Sym(\Omega')} \bigcup_{\gamma \in \Lambda}  H(\Lambda,\gamma,\Omega',\sigma) \right).\qedhere\]
\end{proof}
Combining this with Corollary \ref{prop:G_del} we deduce:
\begin{corollary*}
    The subsets of $\Hom_T(\Gamma, G)$ consisting of representations $\rho$ such that for almost every $x \in X$ the action of $\Gamma$ on the orbit $\rho(\Gamma)x$ is amenable, faithful or highly-transitive respectively are $G_\delta$ in $\Hom_T(\Gamma, G)$.
\end{corollary*}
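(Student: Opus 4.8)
The plan is to recognize each of the three subsets as a set of the form $\Xi_T(P)$ from Corollary \ref{cor:G_del}, applied to one of the $G_\delta$ sets furnished by Proposition \ref{prop:Gdelta properties in the Chabauty}. The proof is then essentially a bookkeeping exercise combining those two results.

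First I would record the elementary dictionary between orbit-theoretic and stabilizer-theoretic properties. Given $\rho \in \Hom_T(\Gamma, G)$ and a point $x \in X$, the orbit $\rho(\Gamma)x$ equipped with the $\Gamma$-action induced by $\rho$ is $\Gamma$-equivariantly isomorphic to the coset space $\Gamma / \rho(\Gamma)_x$, where $\rho(\Gamma)_x = \Stab_\Gamma(x)$ denotes the stabilizer of $x$ for that action. Unwinding the definitions, this action is faithful precisely when $\rho(\Gamma)_x$ is core-free, it is amenable in the sense of Greenleaf precisely when $\rho(\Gamma)_x$ is co-amenable, and it is highly transitive precisely when $\rho(\Gamma)_x$ is co-highly transitive. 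Hence the three subsets of $\Hom_T(\Gamma,G)$ appearing in the statement coincide with $\Xi_T(\Sub^{\text{cf}}(\Gamma))$, $\Xi_T(\Sub^{\text{coa}}(\Gamma))$ and $\Xi_T(\Sub^{\text{cht}}(\Gamma))$ respectively.

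Next I would invoke Proposition \ref{prop:Gdelta properties in the Chabauty}, which asserts that $\Sub^{\text{cf}}(\Gamma)$, $\Sub^{\text{coa}}(\Gamma)$ and $\Sub^{\text{cht}}(\Gamma)$ are $G_\delta$ subsets of $\Sub(\Gamma)$, and then apply Corollary \ref{cor:G_del}, which turns a $G_\delta$ subset $P \subset \Sub(\Gamma)$ into the $G_\delta$ subset $\Xi_T(P) \subset \Hom_T(\Gamma,G)$. This settles all three cases simultaneously.

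As for the main obstacle: there really is none, since all the genuine work has already been carried out in the two cited results. The only thing demanding a modicum of care is checking the dictionary in the first step, together with the observation that the ``for almost every $x$'' clause in the corollary is precisely the one built into the definition of $\Xi_T(\cdot)$, so that no further measurability argument is needed when passing from the Chabauty space to $\Hom_T(\Gamma, G)$.
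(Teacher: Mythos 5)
Your proposal is correct and follows exactly the route taken in the paper: identify the three subsets as $\Xi_T(\Sub^{\mathrm{cf}}(\Gamma))$, $\Xi_T(\Sub^{\mathrm{coa}}(\Gamma))$ and $\Xi_T(\Sub^{\mathrm{cht}}(\Gamma))$ via the isomorphism $\rho(\Gamma)x \cong \Gamma/\Stab_\Gamma(x)$, then apply Proposition \ref{prop:Gdelta properties in the Chabauty} together with Corollary \ref{cor:G_del}. No gaps.
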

\end{no}

\subsection{Faithful representations}

\begin{no} 
Given a representation $\rho\in \Hom(\Lambda,G)$ we say that $\rho$ is \emph{faithful on almost every orbit} if and only if 
\[
\mu(\{x \in X \mid \forall\, \gamma \in \Lambda \setminus\{1\}: \rho(\gamma)|_{[x]} \neq 1\}) = 1.
\]
This implies that $\rho$ is faithful, and if $\cR$ is ergodic, then also the converse holds:
    \begin{lemma*}\label{lem:faithful equals on every orbit}
        Assume that $\cR$ is ergodic and let $\rho \in \Hom(\Lambda,G)$. Then $\rho$ is faithful if and only if it is faithful on almost every orbit.
    \end{lemma*}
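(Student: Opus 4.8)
The plan is to prove the nontrivial implication, assuming $\cR$ ergodic and $\rho$ faithful; the converse is immediate, since the full-measure (hence nonempty) set appearing in the definition of ``faithful on almost every orbit'' contains a point $x$ with $\rho(\gamma)|_{[x]} \neq \id$, and therefore $\rho(\gamma) \neq 1$, for every $\gamma \in \Lambda \setminus \{1\}$ simultaneously.

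So fix $\gamma \in \Lambda \setminus \{1\}$ and a Borel representative $g$ of $\rho(\gamma) \in G$, and put $A_\gamma \coloneqq \{x \in X \mid g(x) \neq x\}$. Since $\rho$ is faithful we have $\rho(\gamma) \neq 1$ in $\Aut(X,[\mu])$, which by definition means exactly that $\mu(A_\gamma) > 0$. Let $T$ be the fixed generator of $\cR$ and consider the Borel set $S_\gamma \coloneqq \bigcup_{n \in \bZ} T^n(A_\gamma)$, which is the $\cR$-saturation of $A_\gamma$. I claim that, up to a nullset,
\[
S_\gamma = \{x \in X \mid \rho(\gamma)|_{[x]_\cR} \neq \id\}.
\]
Indeed $\rho(\gamma)|_{[x]} \neq \id$ holds precisely when $g(z) \neq z$ for some $z \in [x]$, i.e.\ when $[x] \cap A_\gamma \neq \emptyset$, i.e.\ when $x \in S_\gamma$; and the right-hand side does not depend on the choice of representative $g$, because any two representatives differ only on a nullset $N$, whose $\cR$-saturation $\bigcup_{n \in \bZ} T^n N$ is again a nullset ($\cR$ being a countable equivalence relation). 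Now apply ergodicity of $\cR$: since $\mu(A_\gamma) > 0$, its saturation $S_\gamma$ has measure $0$ or $1$, hence measure $1$.

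Finally, because $\Lambda$ is countable, the set
\[
X_0 \coloneqq \bigcap_{\gamma \in \Lambda \setminus \{1\}} S_\gamma = \{x \in X \mid \forall\, \gamma \in \Lambda \setminus \{1\}:\ \rho(\gamma)|_{[x]_\cR} \neq \id\}
\]
is a countable intersection of conull sets, hence conull; this is precisely the statement that $\rho$ is faithful on almost every orbit. There is no real obstacle in this argument: the only points that need a little care are the representative-independence of $\{x \mid \rho(\gamma)|_{[x]} \neq \id\}$ modulo nullsets and the Borel measurability of the $\cR$-saturation, both handled by expressing the saturation through the generator $T$ (which exists by the Ornstein--Weiss theorem recalled above) and using that $\cR$-saturations of nullsets are nullsets.
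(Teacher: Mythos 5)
Your proof is correct and follows essentially the same approach as the paper: both identify $X_\gamma = \{x \mid \rho(\gamma)|_{[x]} \neq 1\}$ as an $\cR$-saturated (hence $0$--$1$) set, use faithfulness to rule out measure $0$, and then take a countable intersection over $\Lambda \setminus \{1\}$. You spell out a few more technicalities (Borel representatives, the saturation via the generator $T$), but the argument is the same one the paper gives.
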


    \begin{proof} For every $\gamma \in \Lambda$ the set $X_\gamma \coloneqq \{x \in X \mid \rho(\gamma)|_{[x]} \neq 1\}$ is a measurable union of $\cR$-orbits, hence has measure either $0$ or $1$. Thus
    \[
    \rho(\gamma) \neq 1 \iff \mu(X_\gamma) \neq 0 \iff \mu(X_\gamma) = 1.
    \]
    Now assume that $\rho$ is faithful; since $\Lambda$ is countable we deduce that
    \[
    \forall \gamma \in \Lambda \setminus \{1\}:\, \mu(X_\gamma) = 1 \Longrightarrow \mu\left(\bigcap_{\gamma \in \Lambda \setminus\{1\}} X_\gamma \right) = 1 \Longrightarrow \rho \text{ faithful on every orbit.}\qedhere
    \]
    \end{proof}
\end{no}

\section{Warmup: The case of type $\I_\infty$}\label{SecIInf}

\begin{no}\label{WarmupLemma} In this section we sketch a proof of Theorem \ref{thm:S_infty}. Thus let $G \coloneqq \Sym(\bZ)$ denote the group of all permutations of $\Z$, i.e.\ bijective maps $g\colon \Z \to \Z$, and let $\sigma \in G$ denote the shift map given by $\sigma(n) \coloneqq n+1$. Given $g \in \mathrm{Sym}(\bZ)$ we define
\[
\mathrm{supp}(g) := \{n\in \bZ \mid g(n) \neq n\}.
\]
Note that in general the support of a permutation is allowed to be infinite. 

Recall that the notation $F \subset_f \bZ$ means that $F$ is a finite subset of $\bZ$. In this case, every injection $F \hookrightarrow \bZ$ can be extended to a permutation in $\mathrm{Sym}(\bZ)$.

We consider constrained representations $\rho \in \Hom_\sigma(\Gamma, G)$. An open neighborhood basis of such a representation $\rho$ is given by
\[U_{F,N}(\rho) \coloneqq  \{\eta \in \Hom_\sigma(\Gamma,G) \mid \eta(b_i)|_F=\rho(b_i)|_F  \text{ for all } i = 1,\dots,N \},\]
where $F$ runs over all finite subsets of $\bZ$ and $N$ runs over all positive integers which are strictly smaller than the rank of $\Gamma$. When dealing with integers we will use the following interval notation: Given
integers $a \leq b$ we denote by
\[
[a,b] := \{a, a+1, \dots, b-1, b\}
\]
the integer points in the corresponding closed interval.
In the following lemma
given $g \in G$ we denote by $C_G(g)$ the centralizer of $g$ in $G$.
  \begin{lemma*}
    Let $g \in G$ and $O \subset G$ a neighborhood of $g$. 
    \begin{enumerate}
        \item There exists $k \in \bN$ such that for all $k' > k$ there exists $h \in O \cap C_G(\sigma^{k'})$ with $h((-k',k']) = (-k',k']$.
        \item For every $k \in \bN$, the set $\bigcup_{k' \geq k} C_{G}(\sigma^{k'})$ is dense in $G$.
        The subset $\{\rho \in \Hom_\sigma(\Gamma, G) \mid \exists n \geq k \colon \rho(\Gamma) \leq C_{G}(\sigma^n)\} \subset \Hom_\sigma(\Gamma,G)$ is dense.
    \end{enumerate}
    \end{lemma*}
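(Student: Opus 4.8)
The plan is to reduce the lemma to one combinatorial construction and then read off the two density statements. Recall that $h\in G=\Sym(\bZ)$ lies in $C_G(\sigma^{n})$ precisely when $h(m+n)=h(m)+n$ for all $m\in\bZ$, so $h$ is determined by its restriction to one residue system modulo $n$ (say $\{1,\dots,n\}$), the only constraint being that the induced map on $\bZ/n\bZ$ be a bijection; and $h\in C_G(\sigma^n)$ satisfies $h((-n,n])=(-n,n]$ if and only if $h$ is the $n$-periodic extension of a genuine permutation $\phi$ of $\{1,\dots,n\}$, i.e.\ $h(jn+r)=jn+\phi(r)$ for $r\in\{1,\dots,n\}$. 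The engine is the following \emph{realization} statement: given $g\in G$ and a finite set $F\subset_f\bZ$, once $n$ exceeds the diameter of $F\cup g(F)$ there is $h\in C_G(\sigma^n)$ with $h|_F=g|_F$. Indeed, for such $n$ the points of $F$, and likewise those of $g(F)$, are pairwise incongruent modulo $n$, so $x\mapsto g(x)$ is a well-defined injection of residue classes; extend it to a bijection $\bar h$ of $\bZ/n\bZ$ and lift it to $h\in C_G(\sigma^n)$ by setting $h(x)=g(x)$ on the classes meeting $F$ and choosing arbitrary representatives on the remaining classes. (Since $G$ carries the permutation topology I would first replace $F$ by $F\cup g^{-1}(F)$, so that $h|_F=g|_F$ also forces $h^{-1}=g^{-1}$ on the original finite set.)

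For part (1) I would fix $g$ and a basic neighbourhood $\{h : h|_F=g|_F\}\subseteq O$, take $k$ to be the diameter of $F\cup g(F)$, and for $k'>k$ build $h$ as the $k'$-periodic extension of a permutation of a window $D$ of $k'$ consecutive integers chosen to contain $F\cup g(F)$, with the permutation prescribed to be $g$ on $F$ and arbitrary elsewhere. Then $h\in C_G(\sigma^{k'})\cap O$ and $h$ restricts to a permutation of every $\bZ$-translate of $D$; whenever $D$ can be taken to be $\{1,\dots,k'\}$ this yields exactly $h((-k',k'])=(-k',k']$.

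Part (2) then follows formally. For the first assertion, fix $k$ and a nonempty open $O\subseteq G$, pick $g\in O$ with $\{h : h|_F=g|_F\}\subseteq O$, and invoke realization: for all $n$ bigger than $k$ and than the diameter of $F\cup g(F)$ there is $h\in O\cap C_G(\sigma^n)\subseteq O\cap\bigcup_{n'\ge k}C_G(\sigma^{n'})$, so that union is dense. For the representation statement, take $\rho\in\Hom_\sigma(\Gamma,G)$ and a basic neighbourhood $U_{F,N}(\rho)$; the crucial point is that $\rho(a)=\sigma\in C_G(\sigma^n)$ for every $n$. Apply realization to the finitely many permutations $\rho(b_1),\dots,\rho(b_N)$ using one common modulus $n$ — any $n$ bigger than $k$ and than the diameter of $F\cup\rho(b_1)(F)\cup\dots\cup\rho(b_N)(F)$ — to get $h_1,\dots,h_N\in C_G(\sigma^n)$ with $h_i|_F=\rho(b_i)|_F$. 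Then $\rho'$ defined by $\rho'(a)=\sigma$, $\rho'(b_i)=h_i$ for $i\le N$ and $\rho'(b_j)=\id$ for the remaining generators satisfies $\rho'\in U_{F,N}(\rho)$, and since $C_G(\sigma^n)$ is a subgroup containing every generator of $\rho'$ we obtain $\rho'(\Gamma)\le C_G(\sigma^n)$ with $n\ge k$.

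The step I expect to require the most care is the extra clause $h((-k',k'])=(-k',k']$ in part (1). By the structural remark it forces $h$ to preserve every block $(jk',(j+1)k']$, which is compatible with $h|_F=g|_F$ only if the data of $g$ is ``block-coherent'' for large $k'$ — informally, $g$ must not carry a point of $F$ across the origin. The two conditions $h\in O$ and $h\in C_G(\sigma^{k'})$, which are all that part (2) uses, are easily arranged for large $k'$ by the realization statement; securing block-coherence as well (for instance by first replacing $g$ inside $O$ by a finitely supported, block-coherent approximant, or by noting that in the intended applications $g$ is already trivial on the relevant finite set) is the delicate remaining point.
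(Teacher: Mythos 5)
Your part (2) argument is correct and arrives via a slightly cleaner route than the paper: instead of first passing to a finitely supported approximant $h'$ and then extending periodically, you lift the residue-class data of $g|_F$ directly to an element of $C_G(\sigma^n)$. This handles the ``common modulus'' issue transparently by applying the same modulus to all $\rho(b_i)$ simultaneously, and is exactly what the paper's proof of (2) needs.

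Your hesitation about part (1) is well-founded: you have in fact spotted a small slip in the lemma statement. As you note, $h \in C_G(\sigma^{k'})$ together with $h((-k',k']) = (-k',k']$ forces $h$ to preserve every block $(mk',(m+1)k']$, hence $h(0) \in (-k',0]$; this is incompatible with $h|_F = g|_F$ whenever $0 \in F$ and $g(0) > 0$ (take $g(0)=1$ and $O = \{h : h(0) = 1\}$). Your suggested fix of replacing $g$ inside $O$ by a finitely supported, block-coherent approximant cannot work as stated, since any $h' \in O$ must satisfy $h'|_F = g|_F$ and so still moves the offending point across $0$. The correct repair, which is what the paper's proof actually constructs (note the factor $2$ in its periodic extension formula $h|_{(2nk'-k',2nk'+k']} := \sigma^{2nk'} h'|_{(-k',k']}\sigma^{-2nk'}$), is to produce $h$ in $C_G(\sigma^{2k'})$, not $C_G(\sigma^{k'})$: first choose a finitely supported $h' \in O$ with $\supp(h') \subset [-k,k]$, then extend with period $2k'$, so that the block $(-k',k']$ containing $\supp(h')$ is a full fundamental domain with $0$ in its interior and is not subdivided further. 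Such $h$ lies in $C_G(\sigma^{2k'})$, preserves every translate $((2m-1)k',(2m+1)k']$, and agrees with $g$ on $F \subset (-k',k']$. This corrected form is all that the downstream applications use, e.g.\ the F{\o}lner sets $\bigcup_{n=-m}^m ((2n-1)k',(2n+1)k']$ in the amenability step of Theorem \ref{thm:S_infty}.
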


    \begin{proof} (1) In the first step we can approximate $g$ to obtain a finitely supported element $h'$ with support contained in $[-k,k]$.
    Then, we extend $h'$ periodically via $h'|_{(2nk'-k',2nk'+k']} \coloneqq \sigma^{2nk'}h'|_{(-k',k']} \sigma^{-2nk'}$.

    (2) The first statement follows immediately from (1).
    For the second statement, recall that $\bigcup_{r \geq 1} \Hom_\sigma(\Gamma_r,G)$ is dense in $\Hom_\sigma(\Gamma,G)$, so we can assume that $\Gamma=\Gamma_r$ is finitely generated.
    Let $\rho \in \Hom_\sigma(\Gamma, G)$.
    For every $r > 0$ we can find $h_1,\dots,h_{r-1}$ arbitrarily close to $\rho(b_1),\dots,\rho(b_{r-1})$
    and $k'$ with $h_1,\dots,h_{r-1} \in C_G(\sigma^{k'})$.
    Now $\eta \in \Hom_\sigma(\Gamma,G)$ with $\eta(b_i) \coloneqq h_i$ satisfies $\eta(\Gamma) \subset C_G(\sigma^{k'})$.
    \end{proof}

    
\end{no}
\begin{no}
\begin{proof}[Sketch of proof of Theorem \ref{thm:S_infty}] 

\item (\ref{itm:atomic finorb}) If $w$ is conjugate to $a^k$ for some $k \in \bZ \setminus\{0\}$, then $\rho(w)$ is conjugate to $\sigma^k$ hence not conservative and in particular does not have finite orbits. It thus remains to show that (i)$\Longrightarrow$(ii). For this we observe that if we abbreviate $\Gamma_0 \coloneqq  \Gamma \setminus \bigcup_{k \in \bZ} \bigcup_{\gamma \in \Gamma} \gamma a^k \gamma^{-1}$, then
\begin{align*}
    \{\rho \in \Hom_\sigma(\Gamma,G) \mid \forall w \in \Gamma_0 \, \forall x \in \Z \, \exists n > 0 \colon \rho(w)^n x = x \} &= \bigcap_{w \in \Gamma_0} \bigcap_{x\in \Z} \Omega_{x,w},
\end{align*}
where $\Omega_{x,w} = \{\rho \mid \exists n \geq 1 \colon \rho(w)^n x = x \}$. In fact, since $\Omega_{x,uwu^{-1}} =\Omega_{\rho(u)^{-1}x, w}$, it suffices to take the intersection over all $w \in \Gamma_0$ which are cyclically reduced. 

Thus let $w = a_1 \dots a_l$ with $a_i \in \{a, a^{-1}, b_1, b_1^{-1},\dots \}$ be cyclically reduced and $x \in \bZ$.
It is again easy to see that each $\Omega_{x,w}$ is open. To show that each $\Omega_{x,w}$ is dense, we fix $\rho \in \Hom_\sigma(\Gamma,G)$  and $F \subset_f \Z$. We then have to construct $\eta \in \Hom_\sigma(\Gamma,G)$ with $\eta(b_i)|_F = \rho(b_i)|_F$ for all $i\geq 1$ such that there is $n \geq 1$ with $\eta(w)^n x = x$.

Roughly speaking, the construction of $\eta$ can be done as follows: If the $\rho(w)$-orbit of $x$ is not finite, then the sequence $x, \rho(a_l)x,\rho(a_{l-1} a_l) x,\dots$ contains infinitely many different elements, so will eventually reach a region far away from $F$. There we have all the freedom to set the $\eta(b_i)$ to ensure that after the word is finished, the $\rho(w)$-orbit of $x$ became periodic. We leave the details to the reader.

\item (\ref{itm:atomic tnf}) It suffices to show that for a generic representation the stabilizer map is injective. We can write 
 \begin{align*}
    \{\rho \in \Hom_\sigma(\Gamma,G) \mid \Stab \colon \Z \to \Sub(\Gamma) \text{ injective} \} &= \bigcap_{x,y \in \Z, x \neq y} \Omega_{x,y},
\end{align*}
where $\Omega_{x,y} \coloneqq  \{\rho \mid \Stab_{\Gamma}(x) \neq \Stab_{\Gamma}(y)\}$. It is again easy to see that each $\Omega_{x,y}$ is open, and it remains to show that each $\Omega_{x,y}$ is dense.

Thus fix $x,y \in \bZ$ and let $\rho \in \Hom_\sigma(\Gamma,G)$ and $F \subset_f \Z$. We have to find $\eta \in U_F(\rho)$ and $w \in \Gamma$ with $\eta(w)x=x$ and $\eta(w)y \neq y$. We will actually choose $\eta$ with $\eta(b_i) = \rho(b_i)$ for all $i\geq 2$ and only construct $\eta(b_1)$ carefully. For this we pick $z \in \bZ \setminus\{x,y\}$ and choose $n >0$ such that $\sigma^n(\{x,y,z\}) \cap (F \cup \rho(b_1)(F)) = \emptyset$. 
We now define an injection $\tau_0\colon F \cup \{\sigma^n(x), \sigma^n(y)\} \hookrightarrow \bZ$ such that $\tau_0|_F = \rho(b_1)|_F$ and $\tau_0(\sigma^n(x)) = x$ and $\tau_0(\sigma^n(y)) = \sigma^n(z)$ and extend it to an element $\tau$ of $G$. If we now set $\eta(b_1) \coloneqq \tau$ and $w\coloneqq a^{-n} b_1 a^n$, then $\eta(w) = \sigma^{-n}\tau \sigma^n$ and hence $\eta(w)x = x$ and $\eta(w) y \neq y$.

\item (\ref{itm:atomic boomerang}) 
We write
        \begin{align*}
           & \{ \rho \in \Hom_\sigma(\Gamma,G) \mid \forall x \in \Z\colon \Stab(x) \in \Boom(\Gamma) \} \\
            =& \{\rho \in \Hom_\sigma(\Gamma,G) \mid \forall x \in \Z \forall \gamma \in \Gamma \forall F \subset_f \Gamma \exists n \geq 1 \colon \Stab(x) \cap F = \gamma^n \Stab(x) \gamma^{-n} \cap F  \}\\
            =& \bigcap_{x \in \Z} \bigcap_{\gamma \in \Gamma} \bigcap_{F \subset_f \Gamma} \{\rho \in \Hom_\sigma(\Gamma,G) \mid \exists n \geq 1 \colon \Stab(x) \cap F = \Stab(\gamma^n x) \cap F \},
        \end{align*}
        and denote $\Omega_{x,\gamma,F} \coloneqq  \{\rho \in \Hom_\sigma(\Gamma,G) \mid \exists n \geq 1: \Stab(x) \cap F = \Stab(\gamma^n x) \cap F \}$. We have to show that $\Omega_{x,\gamma,F}$ is open and dense.

        To show that it is open, note that for every $\rho \in \Omega_{x,\gamma,F}$, every $\rho' \in \Hom_\sigma(\Gamma,G)$ with $\rho'|_F = \rho|_F$ is also contained in $\Omega_{x,\gamma,F}$; and these $\rho'$ are an open neighborhood of $\rho$.
        
        For a generic $\rho$, we know already that $\Omega_{x,\gamma,F}$ is dense whenever $\gamma$ is not conjugate to a power of $a$, since $\langle \gamma \rangle x$ is finite by (\ref{itm:atomic finorb}).
        Then, after conjugating and using that $\Omega_{x,\delta \gamma \delta^{-1},F} = \Omega_{\delta^{-1}x,\gamma,\delta^{-1} F \delta}$, we can assume that $\gamma = a^k$ for some $k \in \Z$.

        Let $\rho \in \Hom_\sigma(\Gamma,G)$.
        By Lemma \ref{WarmupLemma} 
        we can choose $\eta$ close to $\rho$ such that every element of $\eta(\Gamma)$ commutes with $\sigma^{n}$ for a very large $n$.
        But then
        \begin{align*}
            \Stab_{\eta(\Gamma)}(x) \cap F &=
            \{\gamma' \in F \mid \eta(\gamma')(x) = x\} \\
            &=  \{\gamma' \in F \mid  \eta(\gamma')\circ \sigma^{nk}(x) = \sigma^{nk}(x)\} \\
            &= \{\gamma' \in F \mid \eta(\gamma')(x+nk) = x+nk\} \\
            &= \Stab_{\eta(\Gamma)}(\eta(a^k)^n x) \cap F,
        \end{align*}
        which is what we needed to achieve.
\item (\ref{itm: atomic afht}) \emph{Faithful:} It is easy to check that for every $\gamma \in \Gamma$ the subset 
\[
\Omega_{\gamma} = \{\rho \in \Hom_\sigma(\Gamma, G) \mid \exists x \in \Z \colon \rho(\gamma)(x) \neq x \}
\]
is open and dense, and the intersection of these sets is the space of injective restricted representations.

\noindent \emph{Highly transitive:} By definition of the topology, a subgroup of 
$G = \mathrm{Sym}(\bZ)$ is highly transitive if and only if it is dense. Now
the set of constrained representations with dense image is 
\[
\Hom^{\mathrm{dense}}_\sigma(\Gamma,G) = \bigcap_{F \subset_f \Z} \bigcap_{\iota \colon F \hookrightarrow \Z} \Omega_{F,\iota}, \; \text{where} \; \Omega_{F,\iota} \coloneqq   \{\rho \in \Hom_\sigma(\Gamma,G) \mid \exists w \in \Gamma \colon \rho(w)|_F = \iota \}.
\]
Since each $\Omega_{F,\iota}$ is open, it is thus $G_\delta$, and it remains to show that $\Omega_{F,\iota}$ is dense for a fixed $F \subset_f \mathbb{Z}$ and $\iota \colon F \hookrightarrow \Z$. For this we fix $F$ and $\iota$; let $F' \subset_f \Z$ and $\rho \in \Hom_\sigma(\Gamma,G)$ and choose $n > 0$ such that
\[
\sigma^n(F) \cap F' = \emptyset \qand \sigma^n(\iota(F)) \cap \rho(b_1)(F') = \emptyset.
\]
Then $\sigma^n \circ \iota \circ \sigma^{-n}|_{\sigma^n(F)}: \sigma^n(F) \hookrightarrow \Z$ extends to an element $g$ of $G$, and there is a unique $\eta \in \Hom_\sigma(\Gamma,G)$ such that $\eta(b_1) = g$ and $\eta(b_j) = \rho(b_j)$ for $j \geq 2$. Moreover,
\[
\eta(a^{-n} b_1 a^n)|_F = \iota \implies \eta \in \Omega_{F,\iota} \cap U_{F'}(\rho) \implies \Omega_{F,\iota} \text{ is dense}.
\]

\noindent \emph{Amenable:}
The $G_\delta$ part follows from Corollary \ref{prop:Gdelta properties in the Chabauty}.
Recall that $\Gamma$ acts amenably if every finitely generated subgroup acts amenably, and $\bigcup_{r \geq 1} \Hom_\sigma(\Gamma_r,G)$ is dense in $\Hom_\sigma(\Gamma,G)$, so we assume without loss of generality that $\Gamma = \Gamma_r$ for some $r \geq 1$.
Let $\rho \in \Hom_\sigma(\Gamma,G)$.
By Lemma \ref{WarmupLemma}(1), there exist $h_1,\dots,h_{r-1} \in G$ arbitrarily close to $\rho(b_1),\dots,\rho(b_{r-1})$ and $k' > 0$ such that
each $h_i$ leaves each of the intervals $((2n-1)k',(2n+1)k']$, with $n \in \Z$, invariant.
Define $\eta \in \Hom_\sigma(\Gamma,G)$ via $\eta(b_i) := h_i$.
Then, the intervals $A_m = \bigcup_{n=-m}^m ((2n-1)k',(2n+1)k']$ form a F{\o}lner sequence for the action of $T$ and all $h_i$, hence the action of $\Gamma_r$ via $\eta$ and we are done.
\end{proof}

\begin{remark*}\label{IInftyNoGood}
Note that the argument in the proof of Part \eqref{itm:atomic finorb} makes essential use of the fact that singletons are wandering sets of positive measure for $\sigma$, so it appears to not be generalizable to the non-atomic case.
\end{remark*}
\end{no}
\section{Generic restricted representations are e.c.}\label{Sece.c.}

\subsection{Statement of the main result}
\begin{no}
We now work towards the proof of Theorem \ref{GCR1}. We will first establish the most difficult part of the theorem, which is the e.c.\ property. The main difficulty to overcome is that in view of 
Remark \ref{IInftyNoGood} we cannot argue as in the $\I_\infty$-case. Once the e.c. property is established almost all other parts follow quite easily. The one exception is density of generic representations which we will establish in Section \ref{SecDensity} below under addditional restrictions on the Krieger type. 

Throughout this section, $\cR$ denotes a hyperfinite and ergodic countable Borel equivalence relation with measurable full group $G$ on a non-atomic standard probability space $(X, \mu)$, and $T$ is some fixed choice of generator for $\cR$. Moreover, $\Gamma$ denotes a countable non-abelian free group, and we consider the Polish space  $\mathrm{Hom}_T(\Gamma, G)$ of constrained representations of $\Gamma$ in $G$.
\end{no}

\begin{no}\label{ECMainThm} We denote by $\mathrm{Hom}^{\mathrm{ec}}_T(\Gamma, G) \subset \mathrm{Hom}_T(\Gamma, G)$ the subspace of 
elementwise conservative restricted representations as given by
\[
 \mathrm{Hom}^{\mathrm{ec}}_T(\Gamma, G)= \bigcap_{w \in \Gamma} \Omega_w, \; \text{where}\; \Omega_w \coloneqq  \{\rho \in \mathrm{Hom}_T(\Gamma, G) \mid \rho(w) \text{ is conservative}\}.
\]
Then the first main result of this section can be stated as follows:
\begin{theorem*}
    The subset $\mathrm{Hom}^{\mathrm{ec}}_T(\Gamma, G)\subset \mathrm{Hom}_T(\Gamma, G)$ is a dense $G_\delta$-subset.
\end{theorem*}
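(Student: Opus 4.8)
Since $\Gamma$ is countable and $\mathrm{Hom}^{\mathrm{ec}}_T(\Gamma,G)=\bigcap_{w\in\Gamma}\Omega_w$, the Baire category theorem reduces the statement to showing that each $\Omega_w$ is a dense $G_\delta$. The $G_\delta$-part is the routine one: by Hopf's criterion, conservativity of $g\in[\cR]$ is equivalent to divergence of the Radon--Nikodym sums $\sum_{n\ge1}\tfrac{d(\mu\circ g^{n})}{d\mu}(x)$ for $\mu$-a.e.\ $x$; since in a measurable full group the Radon--Nikodym derivative of an element at a point depends only on its local displacement there, $\tfrac{d(\mu\circ g)}{d\mu}$ agrees with $\tfrac{d(\mu\circ h)}{d\mu}$ wherever $g=h$, so $g\mapsto\tfrac{d(\mu\circ g^{n})}{d\mu}$ is continuous in measure for each $n$; the truncated sums $\sum_{n=1}^{N}\tfrac{d(\mu\circ g^{n})}{d\mu}$ then have sublevel sets of upper-semicontinuous measure, and one reads off that the set of conservative elements is $G_\delta$ in $[\cR]$, whence $\Omega_w=\mathrm{ev}_w^{-1}(\{\text{conservative}\})$ is $G_\delta$ and $\bigcap_w\Omega_w$ is $G_\delta$. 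For density we may replace $w$ by a cyclically reduced conjugate, since conservativity is a conjugacy invariant, so $\bigcap_{w\text{ cycl.\ reduced}}\Omega_w\subseteq\Omega_{w'}$ for every $w'$. If $w$ is conjugate to $a^{k}$ then for every $\rho\in\mathrm{Hom}_T(\Gamma,G)$ the element $\rho(w)$ is conjugate to $T^{k}$; as $\mu$ is non-atomic and $T$ ergodic, $X$ splits into finitely many non-atomic ergodic $T^{k}$-components, each conservative by Example~\ref{ConservativeExample}(ii), so $T^{k}$ is conservative by Example~\ref{ConservativeExample}(iii) and $\Omega_w$ is everything. Here we genuinely use the constraint $\rho(a)=T$: because $T$ is aperiodic, every $\rho(w)$ with $w$ involving $a$ inherits infinite orbits, so the soft strategy of the type $\I_\infty$ case (making all generators periodic) fails, cf.\ Remark~\ref{IInftyNoGood}.

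\textbf{Reduction to permutations of $\bZ$.}
Fix a cyclically reduced $w$ which is \emph{not} conjugate to a power of $a$, a representation $\rho_0\in\mathrm{Hom}_T(\Gamma,G)$, an integer $N$, and $\epsilon>0$; we must find $\rho$ with $\rho(a)=T$, $d(\rho(b_i),\rho_0(b_i))<\epsilon$ for $i\le N$, and $\rho(w)$ conservative. We will arrange the hypothesis of the conservativity criterion, Proposition~\ref{Conservativity}: it suffices that almost every $T$-orbit contain at most $k$ infinite $\rho(w)$-orbits, for a constant $k=k(w)$ (and frequently one can do better and make $\rho(w)$ periodic, invoking Example~\ref{ConservativeExample}(i) instead). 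By the description of the action cocycle in \S\ref{ActionCocycle}, on the $T$-orbit of a point $x$ the transformation $\rho(w)$ is conjugate, via $n\mapsto T^{n}x$, to the permutation $\pi_x:=w\bigl(\sigma,\alpha_x(\rho(b_1)),\alpha_x(\rho(b_2)),\dots\bigr)\in\Sym(\bZ)$ obtained by substituting the shift $\sigma$ for $a$ (legitimate since $\rho(a)=T$) and the generator cocycles $\beta_i^{x}:=\alpha_x(\rho(b_i))$ for the $b_i$; along a fixed $T$-orbit these permutations are conjugate by powers of $\sigma$, so the orbit structure of $\rho(w)$ on that $T$-orbit depends only on the orbit. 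Thus the problem becomes: choose the measurable field $x\mapsto(\beta_i^{x})_i$, equivalently the elements $\rho(b_i)\in[\cR]$, compatibly with the cocycle relations and with $d(\rho(b_i),\rho_0(b_i))<\epsilon/2^{i}$ for $i\le N$, so that each $\pi_x$ has at most $k(w)$ infinite orbits.

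\textbf{The zebra lemma and globalization.}
The combinatorial heart is a statement about permutations of $\bZ$, which the paper calls the zebra lemma: given a reduced word $w$ over $\{a^{\pm1},b_1^{\pm1},\dots\}$ not conjugate to a power of $a$ and arbitrary finite partial injections $\tau_i\colon F_i\to\bZ$ with $F_i\subset_f\bZ$, there exist $\beta_i\in\Sym(\bZ)$ extending the $\tau_i$ such that $w(\sigma,\beta_1,\dots)$ has only finitely many infinite orbits -- indeed one may force all its orbits to be finite -- and each $\beta_i$ may be chosen to differ from a prescribed background permutation of bounded displacement only on a set of arbitrarily small asymptotic density. The $\beta_i$ are built inductively along a bi-infinite system of ``barriers'' $\dots<m_{-1}<m_0<m_1<\dots$ partitioning $\bZ$ into long, alternately coloured (``zebra'') intervals; the $\beta_i$ are arranged to respect the colouring and to ``reflect'' the bounded drift contributed by the $a^{\pm1}$-letters of $w$, so that every $w$-orbit stays inside a bounded window of consecutive intervals and hence is finite. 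The hypothesis that $w$ is not conjugate to a power of $a$ is exactly what makes this reflection able to cancel the drift; for $w=a^{k}$ the permutation $\sigma^{k}$ has only infinite orbits and no cancellation is possible. To globalize, one uses the non-singular Rokhlin lemma (as in Lemma~\ref{lem:many disjoint translates}) to lay down, on almost every $T$-orbit, a barrier system relative to which all but an $\epsilon$-fraction of points (in measure) lie far from barriers and are moved only boundedly by $\rho_0(b_1),\dots,\rho_0(b_N)$; one applies the zebra lemma orbit by orbit, with background permutations $\alpha_x(\rho_0(b_i))$ and constraints $\tau_i$ given by their restriction to the ``already good'' part; the resulting measurable field assembles into genuine elements $\rho(b_i)\in[\cR]$ via the extension lemma~\ref{lem:extension_infty} and the rule for specifying full-group elements on conull sets (\S\ref{SubsecMFG}); the density control yields $d(\rho(b_i),\rho_0(b_i))<\epsilon/2^{i}$; and Proposition~\ref{Conservativity} (or Example~\ref{ConservativeExample}(i)) makes $\rho(w)$ conservative. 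Hence each $\Omega_w$ is dense, and the theorem follows.

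\textbf{Main obstacle.}
The crux, and by far the hardest step, is the zebra lemma: constructing the permutations $\beta_i$ so as to simultaneously (i) extend arbitrary finite partial permutations, (ii) neutralize the unavoidable drift of the $a^{\pm1}$-letters of $w$ -- the very obstruction that singles out words conjugate to powers of $a$ -- and (iii) keep all $w$-orbits finite; together with the measure-theoretic bookkeeping needed to run this construction over all $T$-orbits while remaining within uniform distance $\epsilon$ of $\rho_0$. All remaining ingredients -- the $G_\delta$-property, the reduction to cyclically reduced words and the case of powers of $a$, the passage to $\Sym(\bZ)$, and the Rokhlin-plus-extension globalization -- are comparatively routine.
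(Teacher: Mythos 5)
Your overall architecture matches the paper's: Baire category reduces the claim to density and $G_\delta$ of each $\Omega_w$; the $G_\delta$-part comes from exhibiting the conservative elements of $G$ as a $G_\delta$-set (you use a Hopf/Radon--Nikodym-sum characterization where the paper uses Proposition~\ref{ConservativeAll}(iii) together with a countable dense family in the measure algebra --- both are workable, the paper's avoids semicontinuity bookkeeping for the derivative sums); density is reduced via the action cocycle $\alpha$ to a combinatorial statement about permutations of $\bZ$ built from a ``zebra'' / barrier structure; and Proposition~\ref{Conservativity} closes the loop. You have correctly isolated the combinatorial permutation lemma as the crux, and your globalization (small measure barrier set, first-return-type replacement, Radon--Nikodym control of the perturbation) is essentially Proposition~\ref{ZebrasDense}.

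There is, however, a genuine error in your stated zebra lemma. You claim that for any $w$ not conjugate to a power of $a$ the permutations $\beta_i$ can be chosen so that $w(\sigma,\beta_1,\dots)$ has \emph{all} orbits finite. This is false: take $w = ab_1$, which is cyclically reduced and not conjugate to any power of $a$. For any $\beta$ of bounded action radius (in particular any zebra permutation), the permutation $\sigma\beta$ carries a conserved ``crossing number'' $N(g) = |\{n \leq m : g(n)>m\}| - |\{n>m : g(n) \leq m\}|$, independent of $m$ and additive under composition of bounded-radius permutations; since $N(\sigma)=1$ and $N(\beta)=0$, one gets $N(\sigma\beta)=1$, forcing at least one bi-infinite orbit. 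The correct quantitative statement --- which is what the paper's Lemma~\ref{Combinatoric} actually proves, under the size condition $\|w\| < s(\cZ)/(3R)$ --- is that $w(\sigma,\beta_1,\dots)$ has \emph{exactly} $|c_a(w)|$ infinite orbits, where $c_a$ is the $a$-exponent-sum homomorphism. All orbits are finite precisely when $w$ is $a$-balanced ($c_a(w)=0$); the governing dichotomy is balanced vs.\ unbalanced, not conjugate-to-$a^k$ vs.\ not. Your fallback phrasing ``at most $k(w)$ infinite orbits'' is exactly what Proposition~\ref{Conservativity} needs, so the proof you sketch still goes through with $k(w)=|c_a(w)|$; but the parenthetical ``one may force all orbits to be finite'' and the case split you build around conjugacy to powers of $a$ misidentify the relevant invariant. (Incidentally, once one has the $|c_a(w)|$ count, the case $w \sim a^k$ does not need separate treatment --- the lemma applies uniformly and yields $|k|$ infinite orbits, which is still finite.)
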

\end{no}
\begin{no} By the Baire category theorem it suffices so show that for every $w \in \Gamma$ the subset $\Omega_w \subset  \mathrm{Hom}_T(\Gamma, G)$ is dense $G_\delta$. Establishing the $G_\delta$-property is actually elementary and can be reduced to the following observation:
\begin{lemma*}\label{ConOpen} The subset $G^{\mathrm{con}} \coloneqq  \{g \in G \mid g \text{ conservative}\}\subset G$ is a $G_\delta$-set.
\end{lemma*}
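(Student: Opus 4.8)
The goal is to show that $G^{\mathrm{con}} = \{g \in G \mid g \text{ conservative}\}$ is a $G_\delta$-subset of the measurable full group $G = [\cR]$ equipped with the uniform metric. The natural strategy is to write the conservativity condition as a countable intersection of conditions, each of which defines an open (or at least $G_\delta$) subset of $G$. Recall from Proposition \ref{ConservativeAll} that $g$ is conservative if and only if every wandering set for $g$ is a nullset, equivalently for every $A \in \cB$ with $\mu(A) > 0$ there exists $n > 0$ with $\mu(A \cap g^{-n}A) > 0$. The complementary (non-conservative) condition is: there exists a wandering set of positive measure, i.e.\ there exists $A$ with $\mu(A) > 0$ and $\mu(g^n A \cap A) = 0$ for all $n \neq 0$.

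First I would reformulate conservativity using the Hopf decomposition: $g$ is conservative if and only if the conservative part of $X$ has full measure, which can be detected by a single quantity, namely $\sup\{\mu(W) \mid W \text{ wandering for } g\} = 0$. Concretely, I would argue that $g$ is \emph{non}-conservative if and only if there exists $\epsilon > 0$ and a set $W$ with $\mu(W) \geq \epsilon$ that is wandering for $g$. The plan is then to show that for each fixed $\epsilon > 0$, the set
\[
D_\epsilon \coloneqq \{g \in G \mid \exists\, W \in \cB,\ \mu(W) \geq \epsilon,\ \mu(g^n W \cap W) = 0 \text{ for all } n \in \Z \setminus \{0\}\}
\]
is $\mathit{closed}$ in $G$ (with the uniform topology), so that $G^{\mathrm{con}} = \bigcap_{n \geq 1} (G \setminus D_{1/n})$ is $G_\delta$. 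Actually, since the quantifier over $W$ ranges over an uncountable set, I will instead work in the measure algebra $\malg(X,[\mu])$, which is a Polish space, and exploit compactness-type arguments there.

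The key step is therefore: for fixed $\epsilon > 0$, show $D_\epsilon$ is closed. Suppose $g_k \to g$ uniformly, with witnesses $W_k$, $\mu(W_k) \geq \epsilon$. Working in the measure algebra, the ball $\{[W] \mid \mu(W) \geq \epsilon\}$ is a closed, hence complete, separable metric subspace; but it need not be compact, so I cannot immediately extract a convergent subsequence of $[W_k]$. To get around this, I would instead discretize the wandering condition. For fixed $N \geq 1$, let $D_\epsilon^N \coloneqq \{g \mid \exists W, \mu(W) \geq \epsilon,\ \mu(g^n W \cap W) = 0 \text{ for } 1 \leq |n| \leq N\}$. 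One checks $D_\epsilon = \bigcap_{N} D_\epsilon^N$ — actually one must be slightly careful here: a set wandering only up to level $N$ for each $N$ need not be a single wandering set, but by a diagonal/pigeonhole argument one can pass from a sequence of such sets to an honest wandering set of measure $\geq \epsilon/2$, say, using that finitely-wandering sets can be thinned. So it suffices to show each $D_\epsilon^N$ is closed. For $D_\epsilon^N$, the condition ``$\mu(g^n W \cap W) = 0$ for $1 \le |n| \le N$'' depends on $g$ only through $g, g^2, \dots, g^N$ acting on the measure algebra element $[W]$, and the map $g \mapsto (g, \dots, g^N)$ from $G$ to $\Aut(X,[\mu])^N$ is continuous, while the map $(T_1, \dots, T_N, [W]) \mapsto \max_i \mu(T_i[W] \cap [W])$ is continuous on $\Aut(X,[\mu])^N \times \malg(X,[\mu])$ by the Radon--Nikodym/absolute-continuity lemma in \S\ref{par:nonsingular on ma}. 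Then closedness of $D_\epsilon^N$ reduces to showing that for each $g$, the set of ``$\epsilon$-good'' witnesses $[W]$ is closed in the measure algebra (clear) together with an application of the fact that $g_k \to g$ means the actions on the measure algebra converge uniformly, so any witness for a tail of the $g_k$ can be perturbed into a witness for $g$; alternatively, and more robustly, I would note that the uniform convergence $g_k \to g$ forces $\mu(g_k^n W \triangle g^n W) \to 0$ for each fixed $n$, so if we had finitely many witnesses converging in measure algebra we would be done — and to force convergence, replace $W_k$ by $W_k \cap B_j$ for a Følner-type exhaustion, or simply observe that in a $\sigma$-finite space the condition is witnessed on sets of bounded measure and apply weak-$*$ compactness of the characteristic functions $\mathbf{1}_{W_k}$ in $L^\infty(X,\mu)$ against $L^1$, extracting a weak limit $f$ with $0 \le f \le 1$, $\int f\, d\mu \ge \epsilon$, and $\int f \cdot (f \circ g^{-n})\, d\mu = 0$, which forces $f \cdot (f\circ g^{-n}) = 0$ a.e., so $\{f > 0\}$ is wandering of positive measure for $g$ — contradicting $g \in G^{\mathrm{con}}$ if $g$ were a limit.

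\textbf{Main obstacle.} The delicate point is the non-compactness of $\{[W] : \mu(W) \ge \epsilon\}$ in the measure algebra, which prevents a naive subsequence extraction; the fix is either the weak-$*$ compactness argument in $L^\infty$ sketched above (cleanest, and it handles the $\sigma$-finite case uniformly), or the level-by-level discretization $D_\epsilon = \bigcap_N D_\epsilon^N$ combined with the observation that the defining condition of $D_\epsilon^N$ involves only the continuous data $g, g^2, \dots, g^N$ on the Polish measure algebra. I expect the weak-$*$ argument to give the shortest rigorous proof: it directly shows that $G^{\mathrm{con}}$ is the complement of $\bigcup_{n\ge 1} D_{1/n}$ with each $D_{1/n}$ closed, hence $G_\delta$.
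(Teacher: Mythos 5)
Your approach takes a genuinely different route from the paper's. The paper writes $G^{\mathrm{con}}$ \emph{directly} as a countable intersection of open subsets of $G$: starting from the characterization of Proposition~\ref{ConservativeAll} that $g$ is conservative iff $\mu\bigl(A \setminus \bigcup_{n\geq 1}g^{-n}A\bigr) = 0$ for every $A \in \cB$, it truncates the union at a finite level $N$ to obtain open conditions on $g$, and then reduces the quantifier over $A$ from all of $\cB$ to a countable dense family in the measure algebra. You instead pass to the complement, write $G\setminus G^{\mathrm{con}} = \bigcup_{n \ge 1}D_{1/n}$ where $D_\epsilon$ is the set of transformations admitting a wandering set of measure $\ge\epsilon$, and aim to show each $D_\epsilon$ is closed. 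In principle that would also deliver the $G_\delta$-property, but it is a structurally different decomposition, and it forces you to confront a compactness problem that the paper's direct decomposition never has to face: extracting a limiting wandering set from a sequence of witnesses.

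This is exactly where your argument breaks down. Having passed to a weak-$*$ subnet limit $\mathbf{1}_{W_k}\rightharpoonup f$ in $L^\infty(X,\mu)$, you want to conclude $\int f\cdot(f\circ g^{-n})\,d\mu = 0$ from $\int\mathbf{1}_{W_k}\cdot(\mathbf{1}_{W_k}\circ g_k^{-n})\,d\mu=0$. But $h\mapsto\int h\cdot(h\circ g^{-n})\,d\mu$ is a \emph{quadratic} functional, and quadratic functionals are not weak-$*$ continuous; moreover, weak-$*$ limits of indicator functions need not be indicator functions. Nothing in your hypotheses rules out $\mathbf{1}_{W_k}\rightharpoonup c$ for a constant $c\in(0,1)$, in which case $\int f\cdot(f\circ g^{-n})\,d\mu=c^{2}>0$ and $\{f>0\}=X$, which is certainly not a wandering set of positive measure for any $g$. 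Your alternative sketch via the truncated sets $D_\epsilon^N$ runs into the same difficulty: you must still pass a sequence of witnesses to a limit, and the set $\{A\in\malg(X,[\mu]):\mu(A)\ge\epsilon\}$ is not compact in the measure algebra --- a point you yourself flag. As written, neither version of your argument closes the gap, and it is in fact not at all clear that $D_\epsilon$ is closed in the uniform topology. You would need an essentially different idea here, or (more simply) you should avoid the complement entirely, as the paper does, and build the countable intersection of open sets directly from the recurrence condition of Proposition~\ref{ConservativeAll}.
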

\begin{proof} Recall from Proposition \ref{ConservativeAll} that $g \in G$ is conservative if and only if for every measurable set $A \subset X$ we have $\mu(A \setminus \bigcup_{n \geq 1} g^{-n}(A)) = 0$, hence
\[
G^{\mathrm{con}} = \bigcap_{A \in \cB} \bigcap_{N \in \bN} \bigcap_{k \geq 1} \Omega_{A,N,k}, \quad \text{where} \quad \Omega_{A,N,k} \coloneqq  \Big\{g \in G \mid \mu(A \setminus \bigcup_{n = 1}^N g^{-n}(A)) < \frac{1}{k}\Big\}. 
\]
It is clear that $\Omega_{A,N,k} \subset G$ is open; the problem is that $\cB$ is not countable. To remedy this we choose a countable set $\cU$ of measurable sets that represent a dense subset of the measure algebra. We claim that for every $N \geq 1$ we have
\[
\bigcap_{A \in \cB} \bigcap_k \Omega_{A,N,k}= \bigcap_k \bigcap_{A \in \cU} \Omega_{A,N,k}.
\]
Indeed, the inclusion $\subseteq$ is clear and to show $\supseteq$, it suffices to prove that for every $k\geq 1$ we have $ \bigcap_{A \in \cB} \Omega_{A,N,k} \supset \bigcap_{A \in \cU} \Omega_{A,N,k+1}$. This, however, is true since the action of $g$ on the measure algebra is continuous: If there exists $A \in \cB$ with $\mu(A \setminus \bigcup_{n = 1}^N g^{-n}(A)) \geq \frac{1}{k}$ then for every $\epsilon > 0$ there exists $U \in \cU$ with $\mu(U \setminus \bigcup_{n = 1}^N g^{-n}(U)) \geq \frac{1}{k} - \epsilon$.
\end{proof}
\begin{corollary*}\label{ConOpen2} The subset $\Omega_w \subset \mathrm{Hom}_T(\Gamma, G)$ is $G_\delta$ for every $w \in \Gamma$, and hence also the subset $ \mathrm{Hom}^{\mathrm{ec}}_T(\Gamma, G)\subset  \mathrm{Hom}_T(\Gamma, G)$ is a $G_\delta$-set.
\end{corollary*}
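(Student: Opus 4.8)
The plan is to recognize $\Omega_w$ as the preimage of the conservative locus $G^{\mathrm{con}}$ under the evaluation map, and then to invoke only formal closure properties of $G_\delta$-sets. First I would recall from \S\ref{Ev} that for each $w \in \Gamma$ the evaluation map $\mathrm{ev}_w \colon \mathrm{Hom}_T(\Gamma, G) \to G$, $\rho \mapsto \rho(w)$, is continuous: writing $w$ as a reduced word in the basis elements, $\rho(w)$ is obtained from the fixed element $T = \rho(a)$ and finitely many of the coordinates $\rho(b_i)$ by repeated multiplication and inversion, all of which are continuous operations in the Polish group $G$. By the very definition of $\Omega_w$ we then have $\Omega_w = \mathrm{ev}_w^{-1}(G^{\mathrm{con}})$, where $G^{\mathrm{con}} = \{g \in G \mid g\ \mathrm{conservative}\}$.

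Next I would apply Lemma \ref{ConOpen}, which asserts that $G^{\mathrm{con}}$ is a $G_\delta$-subset of $G$, together with the general fact (recorded in the preliminaries) that preimages of $G_\delta$-sets under continuous maps are $G_\delta$. This immediately yields that $\Omega_w \subset \mathrm{Hom}_T(\Gamma, G)$ is $G_\delta$ for every $w \in \Gamma$. Finally, since $\Gamma$ is countable, $\mathrm{Hom}^{\mathrm{ec}}_T(\Gamma, G) = \bigcap_{w \in \Gamma} \Omega_w$ is a countable intersection of $G_\delta$-sets and hence $G_\delta$. (If one prefers an explicit description, one can unravel the formula for $G^{\mathrm{con}}$ from the proof of Lemma \ref{ConOpen} and obtain $\mathrm{Hom}^{\mathrm{ec}}_T(\Gamma, G) = \bigcap_{w \in \Gamma}\bigcap_{N}\bigcap_{k}\bigcap_{A \in \cU} \mathrm{ev}_w^{-1}(\Omega_{A,N,k})$, a countable intersection of open sets, where $\cU$ is the fixed countable dense family in the measure algebra.)

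I do not expect any real obstacle here: the entire mathematical content of the $G_\delta$-assertion is already contained in Lemma \ref{ConOpen} — whose only subtle point is the reduction from the uncountable family of all measurable sets $A$ to a countable dense family $\cU$, using continuity of the $G$-action on the measure algebra — and the corollary is a purely formal consequence via continuity of evaluation and countability of $\Gamma$. The genuinely difficult and separate task, not addressed by this corollary, is to prove that each $\Omega_w$ is moreover \emph{dense}, which is where the combinatorial analysis of the induced permutations of the orbits (zebra permutations) will be needed.
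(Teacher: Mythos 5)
Your proof is correct and is essentially the paper's own argument: identify $\Omega_w = \mathrm{ev}_w^{-1}(G^{\mathrm{con}})$, invoke continuity of $\mathrm{ev}_w$ together with Lemma \ref{ConOpen}, and use countability of $\Gamma$ to pass to the intersection. The explicit unravelling at the end is a harmless expansion; the paper stops at the preimage observation.
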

\begin{proof} The evaluation map $\mathrm{ev}_w \colon \mathrm{Hom}^{\mathrm{ec}}_T(\Gamma, G) \to G$ is continuous (see  \S \ref{Ev}), and 
$\Omega_w = \mathrm{ev}_w^{-1}(G^{\mathrm{con}})$, hence the corollary follows from Lemma \ref{ConOpen}.
\end{proof}
In view of the Baire category theorem 
we have thus reduced the proof of Theorem \ref{ECMainThm} to showing that each of the subsets $\Omega_w \subset \mathrm{Hom}_T(\Gamma, G)$ is dense. We will establish this density in Subsection \ref{SecZebras} below; the proof is based on a purely combinatorial statement about permutations which we establish in the next subsection.
\end{no}

\subsection{A combinatorial lemma}\label{SecComLemma}
\begin{no}\label{sec:interval} 
Recall our notation for intervals in the integers: Given integers $a \leq b$, we set
\[
[a,b] := \{a, a+1, \dots, b-1, b\}.
\]
If $c,d,e,f$ are integers with $c \leq d = e-1<f$, then we say that $[c,d]$ is \emph{left-adjacent} to $[e,f]$ (or that $[e,f]$ is \emph{right-adjacent} to $[c,d]$). We will be interested in a special class of permutations of $\bZ$ defined as follows:
\begin{definition*}
A collection $\cZ = ((B_n)_{n \in \Z}, (W_n)_{n \in \Z})$ of intervals is called a \emph{zebra strip decomposition}
if \[
\Z = \dots \sqcup W_{k-1} \sqcup B_k \sqcup W_{k} \sqcup B_{k+1} \sqcup \dots,
\]
and each $W_k$ is right-adjacent to $B_{k}$ and left-adjacent to $B_{k+1}$. In this case the sets $W_k$ (respectively $B_k$) are called the \emph{white stripes} (respectively \emph{black stripes}) of $\cZ$ and the
integer
\[
s(\cZ) \coloneqq  \min\{|B_k|\mid k \in \Z\}.
\]
is called its \emph{minimal strip width} of $\cZ$. A permutation $g$ of $\Z$ is called a \emph{zebra permutation} of type $\cZ$ if it preserves each of the sets $W_k$ and fixes each of the sets $B_k$ pointwise.
\end{definition*}
\end{no}

\begin{figure}[h!]
\begin{tikzpicture}[x=0.7cm, y=0.7cm, every node/.style={font=\small}]

\def\xmin{1}
\def\xmax{20}
\draw[thick] (\xmin,0) -- (\xmax,0);

\foreach \i in {1,...,20}{
  \coordinate (p\i) at (\i,0);
  \filldraw[black] (p\i) circle (0.07cm);
}

%

\foreach \xlo/\xhi/\label in {
  0.5/3.5/{$B_{-1}$},
  7.5/10.5/{$B_{0}$},
  14.5/17.5/{$B_{1}$},
  19.5/20.5/{$B_{2}$}
}{
  \filldraw[black, rounded corners=1pt] (\xlo,-0.6) rectangle (\xhi,-0.3);
  \node[below=9pt] at ({(\xlo+\xhi)/2},-0.6) {\label};
}

\node[below=9pt] at (5.5,-0.6)  {$W_{-1}$};
\node[below=9pt] at (12.8,-0.6) {$W_{0}$};
\node[below=9pt] at (18,-0.6) {$W_{1}$};


\newcommand{\permRightArrowAbove}[2]{%
  \coordinate (src) at (p#1);
  \coordinate (tgt) at (p#2);
  \draw[gray!70!black, ->, line width=0.7pt, shorten >=1pt]
    (src) to[out=65, in=115, looseness=1.5] (tgt);
}

\newcommand{\permLeftArrowAbove}[2]{%
  \coordinate (src) at (p#1);
  \coordinate (tgt) at (p#2);
  \draw[gray!70!black, ->, line width=0.7pt, shorten >=1pt]
    (src) to[out=115, in=65, looseness=1.5] (tgt);
}

\newcommand{\permRightArrowBelow}[2]{%
  \coordinate (src) at (p#1);
  \coordinate (tgt) at (p#2);
  \draw[gray!70!black, ->, line width=0.7pt, shorten >=1pt]
    (src) to[out=-115, in=-65, looseness=1.5] (tgt);
}

\newcommand{\permLeftArrowBelow}[2]{%
  \coordinate (src) at (p#1);
  \coordinate (tgt) at (p#2);
  \draw[gray!70!black, ->, line width=0.7pt, shorten >=1pt]
    (src) to[out=-115, in=-65, looseness=1.5] (tgt);
}

\newcommand{\permDoubleArrowAbove}[2]{%
  \coordinate (src) at (p#1);
  \coordinate (tgt) at (p#2);
  \draw[gray!70!black, <->, line width=0.7pt, shorten >=1pt]
    (src) to[out=65, in=115, looseness=1.5] (tgt);
}

\permRightArrowAbove{4}{5}
\permRightArrowAbove{5}{7}
\permLeftArrowBelow{6}{4}
\permLeftArrowBelow{7}{6}

\permDoubleArrowAbove{11}{14}
\permDoubleArrowAbove{12}{13}

\permDoubleArrowAbove{18}{19}
\end{tikzpicture}
\caption{\label{fig:zebra} A zebra permutation}
\end{figure}

\begin{no} We now formulate our main combinatorial lemma concerning zebra permutations. Let $c_{a} \colon \Gamma \to \Z$ denote the $a$-counting homomorphism, i.e.\ the unique homomorphism with  $c_a(a)=1$ and $c_a(b_i)=0$ for all $i$. If $g$ is a permutation of $\Z$, then we define its \emph{action radius} by
\begin{equation}\label{ActionRadius}  R(g) \coloneqq  \sup_{n \in \Z}|g(n) - n| \in [0, \infty] \quad \text{so that}\quad  R(g_1g_2) \leq R(g_1) + R(g_2).\end{equation}

\begin{lemma*} \label{Combinatoric} Consider a $\Gamma_r$-action on $\Z$ such that $a$ acts by $n \mapsto n+1$ and $b_1, \dots, b_{r-1}$ act by zebra permutations of the same type $\cZ$. If $w \in \Gamma_r$ satisfies
\begin{equation}\label{WAssumption}
\|w\| < \frac{s(\cZ)}{3R}, \quad \text{where}\quad R \coloneqq   \sup\{R(b_j) \mid j \in\{1, \dots, r-1\}\},
\end{equation}
then $w$ has exactly $|c_{a}(w)|$ infinite orbits.
\end{lemma*}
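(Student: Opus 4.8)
The plan is to reduce the statement to the geometry of the black stripes -- which the generators $b_j$ leave pointwise fixed -- by tracking a single coarse coordinate along $w$-orbits. If $R=0$ the $b_j$ are trivial and $w$ acts as $n\mapsto n+c_a(w)$, so the claim is immediate (the orbits being the residue classes modulo $c_a(w)$, or a single point when $c_a(w)=0$); if $R=\infty$ there is nothing to prove. So assume $1\le R<\infty$ and put $D\coloneqq\|w\|R\in\bZ$, so that $3D<s(\cZ)$, i.e.\ $|B_k|\ge s(\cZ)\ge 3D+1$ for all $k$. Since $R(a)=1\le R$, subadditivity of the action radius gives $R(w)\le D$ and, more precisely, every partial product occurring in the evaluation of $w(x)=a_1(a_2(\cdots a_\ell(x)))$ (with $w=a_1\cdots a_\ell$ reduced, $\ell=\|w\|$) stays within distance $D$ of $x$. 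Passing to $w^{-1}$ if necessary -- which changes neither the orbits nor $\|w\|$ and negates $c_a$ -- we may assume $p\coloneqq c_a(w)\ge 0$. Writing $B_k=[l_k,r_k]$, I would set $B_k^\circ\coloneqq[l_k+D,r_k-D]$, a nonempty interval of length $|B_k|-2D\ge D+1$, and let $R_k$ be the interval strictly between $B_k^\circ$ and $B_{k+1}^\circ$ (it contains the right margin of $B_k$, all of $W_k$, and the left margin of $B_{k+1}$); the intervals $\dots,B_k^\circ,R_k,B_{k+1}^\circ,\dots$ are finite, nonempty and tile $\bZ$. Three facts are then immediate: (i) $w$ restricted to $\bigcup_k B_k^\circ$ is the honest shift $x\mapsto x+p$, because the evaluation of $w(x)$ for $x\in B_k^\circ$ never leaves $B_k$, where all $b_j$ act trivially; (ii) since both $|B_k^\circ|$ and $|R_k|$ exceed $D\ge R(w)$, the permutation $w$ cannot send a point lying on one side of some $B_k^\circ$ (or some $R_k$) to the other side; (iii) consequently the coarse coordinate $\lambda\colon\bZ\to\bZ$, $\lambda(x)=k$ for $x\in B_k^\circ\cup R_k$, changes by at most $1$ under one application of $w$.

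The key structural input is that $\lambda$ is \emph{non-decreasing} along forward $w$-orbits: if $x\in B_k^\circ$ then $w(x)=x+p\ge x$ stays in $B_k^\circ\cup R_k$, so $\lambda$ is unchanged; if $x\in R_k$ then, because $|B_k^\circ|>D$, the point $w(x)$ cannot fall below the left end of $B_k^\circ$, so again $\lambda(w(x))\ge\lambda(x)$. From this I would deduce that every finite $w$-orbit $O$ avoids $\bigcup_k B_k^\circ$ entirely, hence sits inside a single $R_K$: on a finite orbit $\lambda$ is periodic and non-decreasing, hence constant $\equiv K$, so $O\subseteq B_K^\circ\cup R_K$; if $O$ met $B_K^\circ$ one could run $w^{-1}$ (which equals $x\mapsto x-p$ on $B_K^\circ$) from such a point until first leaving $B_K^\circ$ on the left, reaching a point $z_0$ among the $p$ leftmost points of $B_K^\circ$ whose predecessor $z_0-p$ lies in the left margin of $B_K$, so has $\lambda=K-1$, contradicting $\lambda\equiv K$ on $O$. (When $p=0$ the cores are pointwise fixed by $w$, which already finishes this case with zero infinite orbits, matching $|c_a(w)|=0$.)

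For the count I would fix any core $B_k^\circ$ and let $y_1<\dots<y_p$ be its $p$ leftmost points (available since $|B_k^\circ|\ge D+1>p$). By the previous paragraph each $y_i$ lies in an infinite orbit. These orbits are pairwise distinct: if $y_j=w^t(y_i)$ with $t\ge1$, monotonicity forces $\lambda\equiv k$ along $y_i,w(y_i),\dots,w^t(y_i)=y_j$, yet $w^{t-1}(y_i)=w^{-1}(y_j)=y_j-p$ sits in the left margin of $B_k$, so has $\lambda=k-1$. And every infinite orbit $O$ meets $\{y_1,\dots,y_p\}$: since $O$ is infinite, monotonicity forces $\lambda(w^t(x))\to+\infty$ as $t\to+\infty$ and $\to-\infty$ as $t\to-\infty$ (otherwise some half-orbit would be trapped in a finite set $B_K^\circ\cup R_K$, forcing periodicity), so $t\mapsto\lambda(w^t(x))$ is a non-decreasing surjection of $\bZ$ onto $\bZ$; at the first time it attains the value $k$ the point has just stepped from $R_{k-1}$ into $B_k^\circ$, hence lies in the $D$ leftmost points of $B_k^\circ$, and applying $w^{-1}$ (which on $B_k^\circ$ is $x\mapsto x-p$) once more must leave $B_k^\circ$ -- else $\lambda$ would already equal $k$ one step earlier -- which is possible only if the point is one of $y_1,\dots,y_p$. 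Thus $O\mapsto(\text{the unique }y_i\in O)$ is a bijection from the set of infinite orbits onto $\{y_1,\dots,y_p\}$, so $w$ has exactly $p=|c_a(w)|$ infinite orbits.

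The conceptual heart is the monotonicity of $\lambda$; granting that, the slogan ``an infinite orbit threads through the left end of every core'' makes the count transparent, and $p=0$ comes out as the degenerate case. The step I expect to be most error-prone is the interval bookkeeping checking that the single hypothesis $\|w\|R<s(\cZ)/3$ -- i.e.\ $3D<s(\cZ)$ -- is exactly what is needed: it simultaneously makes the cores $B_k^\circ$ nonempty with $|B_k^\circ|>\max(p,D)$, prevents $w$ from jumping over a core or a region (so $\lambda$ moves by at most one), and guarantees that a step entering a core lands within its $D$ leftmost points, the fact used in both the distinctness and the surjectivity arguments.
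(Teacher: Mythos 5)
Your proof is correct, and at its heart it is the same argument as the paper's. Both proofs isolate the $D$-interior of each black stripe (your $B_k^\circ$, the paper's $C_k$ after writing $B_k = L_k \sqcup C_k \sqcup R_k$), observe that $w$ acts there as the honest shift $n\mapsto n+c_a(w)$, and then establish a one-way-traffic statement that prevents a forward $w$-orbit from drifting leftward past a core. Your ``coarse coordinate $\lambda$ is non-decreasing'' is exactly the paper's forward-invariance $wR_k^+ \subset R_k^+$ (Lemma \ref{ZebraDyn}(v)), and the terminal counting argument is dual: you biject infinite orbits onto the $p$ leftmost points of a fixed core, while the paper shows that the $p$ leftmost points of a fixed $C_{k_o}$ have pairwise distinct infinite orbits which cover $C_{k_o}$ and meet every other core. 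The monotone-coordinate packaging is a pleasant conceptual reformulation, but it does not change the underlying mechanism, and the arithmetic you flag as the delicate point (that $3\|w\|R < s(\cZ)$ simultaneously makes the cores wide enough, prevents jumps over cores or gaps, and ensures a first entry into a core lands within its $D$ leftmost points) is precisely the same bookkeeping carried out in the paper via \eqref{sigmawupper1}--\eqref{StripsAreWide}.
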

\begin{remark*}\label{balanced} If $w$ is \emph{$a$-balanced} in the sense that $c_{a}(w)=0$, then the conclusion of Lemma \ref{Combinatoric} is that $w$ acts with finite orbits.
 \end{remark*}
\end{no}

\begin{no}
 The remainder of this subsection is devoted to the proof of Lemma \ref{Combinatoric}. We will assume that $c_a(w) \geq 0$ which can always be achieved by replacing $w$ by $w^{-1}$ if necessary. We abbreviate $L \coloneqq  \|w\|$ and write
\[
w = s_1 \cdots s_L \text{ with }s_i \in \{a^{\pm 1}, b_1^{\pm 1}, \dots b_{r-1}^{\pm 1}\}.
\]
Since $R(a) = 1$, we deduce from \eqref{ActionRadius} that $R(w) \leq L R$, hence \eqref{WAssumption} yields
\begin{equation}\label{sigmawupper1}
c_a(w) \leq R(w) \leq LR \leq \lfloor s(\cZ)/3 \rfloor
\end{equation}
We decompose each $B_k$ as $B_k = L_k \sqcup C_k \sqcup R_k$, where $L_k$ and $R_k$ denote the first, respectively last $LR$ points of $B_k$. Then
\[\bZ = \dots \sqcup R_{k-1} \sqcup W_{k-1} \sqcup L_k \sqcup C_k \sqcup R_k \sqcup W_{k} \sqcup L_{k+1} \sqcup \dots
\] 
 and by \eqref{sigmawupper1} we have
\begin{equation}\label{StripsAreWide}
R(w) \leq LR \leq \min\{|L_k|, |C_k|, |R_k|\} \text{ for all }k \in \bZ.
\end{equation}
For every $k \in \Z$ we define $R_k^+ \coloneqq  R_k \sqcup W_k \sqcup L_{k+1} \sqcup C_{k+1} \sqcup R_{k+1} \sqcup\dots$.
\begin{lemma*}\label{ZebraDyn}
With notations as above the following hold for every $k \in \Z$:
\begin{enumerate}[(i)]
\item $wW_k \subset R_k \sqcup W_{k} \sqcup L_{k+1}$.
\item $wL_k \subset R_{k-1} \sqcup W_{k-1} \sqcup L_k \sqcup C_k$.
\item $wC_k \subset C_k \sqcup R_k$; more precisely, if $n \in C_k$, then $wn = n + c_{a}(w) > n$.
\item $wR_k \subset R_k \sqcup W_k \sqcup L_{k+1}$.
\item $wR_k^+ \subset R_k^+$.
\end{enumerate}
\end{lemma*}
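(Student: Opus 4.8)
The plan is to isolate two elementary facts and then run through (i)--(v) case by case; assume throughout $R\ge 1$ (if $R=0$ every $b_j$ acts trivially and there is nothing to prove). \emph{(a) Displacement bound:} by \eqref{sigmawupper1} we have $R(w)\le LR$, so $|wn-n|\le LR$ for all $n$. \emph{(b) Confinement principle:} write $w=s_1\cdots s_L$ and, for a point $n$, set $m_0:=n$, $m_i:=s_{L-i+1}\cdots s_L(n)$, so $m_L=wn$ and $m_i$ is obtained from $m_{i-1}$ by one generator. If $m_0,\dots,m_j$ all lie in one black stripe, the $b_t$ fix them, so $m_i=m_0+c_a(s_{L-i+1}\cdots s_L)$ and in particular $|m_i-m_0|\le i\le L$ for $i\le j$. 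I will also use without comment the numerology $|L_k|=|R_k|=LR$, $|C_k|=|B_k|-2LR\ge LR$ (from \eqref{StripsAreWide}), and the resulting positions of the endpoints of $L_k,C_k,R_k$ inside $B_k$. With these in hand, (i) and (ii) follow from (a) alone: for $m\in W_k$ the interval $[m-LR,m+LR]$ is buffered on both sides by the length-$LR$ strips $R_k$ and $L_{k+1}$, hence lies in $R_k\sqcup W_k\sqcup L_{k+1}$; for $m\in L_k$ one similarly gets $[m-LR,m+LR]\subset R_{k-1}\sqcup W_{k-1}\sqcup L_k\sqcup C_k$, using $|C_k|\ge LR$ on the right and $|R_{k-1}|=LR$ on the left.

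For (iii), take $n\in C_k$ and claim the trajectory $m_0,\dots,m_L$ never leaves $B_k$: otherwise, letting $i$ be the first index with $m_i\notin B_k$, fact (b) gives $|m_{i-1}-n|\le L-1$, hence $|m_i-n|\le L-1+R\le LR$, contradicting that $n\in C_k$ sits at distance $\ge LR$ from both ends of $B_k$. Therefore $wn=n+c_a(w)$, and since $0\le c_a(w)\le R(w)\le LR$ this point lies in $\{n,\dots,n+LR\}\subset C_k\sqcup R_k$, strictly to the right of $n$ once $c_a(w)>0$.

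For (iv), take $m\in R_k$. By (a) we already have $wm\ge m-LR\ge\min C_k$ and $wm\le m+LR\le\max L_{k+1}$, so $wm\in C_k\sqcup R_k\sqcup W_k\sqcup L_{k+1}$ and it remains only to exclude $wm\in C_k$. If the trajectory of $m$ never leaves $B_k$, then $wm=m+c_a(w)\ge m$, which lies strictly to the right of all of $C_k$. Otherwise, let $i$ be the first exit from $B_k$; since $m_{i-1}$ is within $L-1$ of $m\in R_k$, fact (b) forces $m_{i-1}\in C_k\sqcup R_k$, so $m_{i-1}$ lies at distance $\ge LR\ge R$ from the left end of $B_k$, and the exit must therefore be to the right, $m_i>\max R_k$; then the remaining word, of length $\le L-1$, moves $m_i$ by less than $LR=|R_k|$, so $wm$ cannot get back across $R_k$ into $C_k$. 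Either way $wm\in R_k\sqcup W_k\sqcup L_{k+1}$. (Alternatively, the same argument as for (iii), applied to $w^{-1}$ whose $a$-count is $\le 0$, gives $w^{-1}C_k\subset L_k\sqcup C_k$; as this is disjoint from $R_k$ we get $wR_k\cap C_k=\emptyset$.) Finally (v) is a corollary: a point of $R_k^+\setminus R_k$ lies strictly to the right of $R_k$, so (a) keeps its image $\ge\min R_k$, i.e.\ in $R_k^+$; and a point of $R_k$ lands in $R_k\sqcup W_k\sqcup L_{k+1}\subset R_k^+$ by (iv).

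I expect (iv) to be the only real obstacle: the displacement bound alone permits $wR_k$ to meet $C_k$, and excluding this requires the step-by-step trajectory analysis, leaning crucially on the sign condition $c_a(w)\ge 0$ and on the strips $C_k,R_k$ being wide enough ($\ge LR$) that no single generator can jump over $R_k$ and that a short remaining word cannot carry an escaped trajectory back across it.
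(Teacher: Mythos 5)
Your proof is correct and follows essentially the same approach as the paper's: the displacement bound $R(w)\le LR$ together with the observation that, as long as a trajectory stays inside a black stripe, only the $a$-letters contribute so $wn=n+c_a(w)$. The only cosmetic differences are that you isolate the confinement principle (b) as an explicit intermediate step, you index the trajectory by the \emph{first} exit from $B_k$ in (iv) where the paper works with the \emph{last} visit outside $C_k\sqcup R_k$ (both handle the case correctly), and your derivation of (v) directly from (a) and (iv) is a small simplification over the paper's appeal to (i)--(iv) applied at all indices $k'\ge k$.
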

\begin{proof} By \eqref{StripsAreWide} no $w$-orbit can jump over one of the strips $L_k$, $C_k$ or $R_k$ without entering it. This implies (i) and (ii), and (v) is immediate from (i)-(iv). For the proof of (iii) (respectively (iv)) we pick $n \in C_k$ (respectively $n \in R_k$) and define
\[
n_0 \coloneqq  n, \; n_1\coloneqq  s_Ln_0, \; n_2 \coloneqq  s_{L-1}s_Ln_0, \; \dots, \; n_L \coloneqq  s_{1} \cdots s_{L}n_0 = wn.
\]
\item (iii) Since $b_1, \dots, b_{r-1}$ fix $B_k$ pointwise, a straight-forward induction shows that
\begin{equation}\label{InductionZebra}
n_j \in B_k\qand n_0 \leq n_j \leq n_0+j \quad \text{ for all } j = 0, \dots, L.
\end{equation}
 Since $a$ acts by the shift and $b_1, \dots, b_{r-1}$ fix $B_k$ pointwise it follows that $wn = n+c_a(w)$.\\

\item (iv) By \eqref{StripsAreWide} we have $n_j \in C_k \sqcup R_k \sqcup W_k \sqcup L_{k+1}$ for all $j \in \{1, \dots, L\}$. If $n_L  = wn\in W_k 
\sqcup L_{k+1}$, then we are done, thus assume that $n_L \in C_k \sqcup R_k$. We distinguish two cases.

\item If $\{n_0, \dots, n_L\} \subset C_k \sqcup R_k \subset B_k$, then $wn = n_L \in C_k \sqcup R_k$ and
$b_1, \dots, b_{r-1}$ act trivially on each $n_j$, and hence $wn = n + c_a(w) \geq n$, which implies $wn \in R_k$. Otherwise there exists $j$ such that $n_j \in W_k \sqcup L_{k+1}$ and $\{n_{j+1}, \dots, n_{L}\} \in C_k \sqcup R_k \subset B_k$. We then obtain $n_{j+1} \geq n_j - \sigma$ and since every letter maps an element of $B_k$ by at most $1$ we get $n_{j+1+m} \geq n_j - \sigma - m$ for $m \leq L-j-1$. In particular, $wn \geq n_j - L \sigma$, and since $n_j \in W_k\sqcup L_{k+1}$ and $R_k$ has width $L\sigma$ we cannot have $wn \in C_k$; thus $wn \in R_k$.
\end{proof}
\begin{corollary*}\label{FwdOrbitAll} Let $k_o \in \Z$ and $m \coloneqq  \min C_{k_o}$. Then the $w$-orbits of $m$, $m+1$, \dots, $m+c_a(w) - 1$ are pairwise disjoint, cover $C_{k_o}$ and intersect $C_k$ for every $k \in \Z$.
\end{corollary*}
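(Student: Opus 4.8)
The plan is to read off the orbit structure of $w$ directly from the confinement estimates in Lemma \ref{ZebraDyn}. Write $c \coloneqq c_a(w)$; we may assume $c \geq 1$ (the relevant case). By Lemma \ref{ZebraDyn}(iii), $w$ acts on the interval $C_{k_o} = [m,q]$ simply as the translation $n \mapsto n+c$, so the first step is to split $C_{k_o}$ into the $c$ arithmetic \emph{chains} $\chi_i \coloneqq \{\,m+i,\, m+i+c,\, m+i+2c,\, \dots\,\} \cap C_{k_o}$, for $i = 0, \dots, c-1$. These partition $C_{k_o}$, and since $w$ is literally $n \mapsto n+c$ on $C_{k_o}$, each $\chi_i$ lies in the $w$-orbit of $m+i$; as the $\chi_i$ exhaust $C_{k_o}$, this already yields the covering statement.

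The crucial point --- and where I expect the only real work --- is that an orbit, once it leaves $C_{k_o}$, never returns to it, in either time direction; granting this, the orbit of $m+i$ meets $C_{k_o}$ in exactly $\chi_i$, and pairwise disjointness of the orbits of $m, \dots, m+c-1$ follows because distinct $\chi_i$ are disjoint while $w$-orbits are either equal or disjoint. For the forward direction I would argue as follows: the forward orbit of any $n \in \chi_i$ runs $n, n+c, n+2c, \dots$ inside $C_{k_o}$ and, since $c \leq |R_{k_o}|$ by \eqref{StripsAreWide}, its first step out of $C_{k_o}$ lands in $R_{k_o} \subseteq R_{k_o}^+$; by Lemma \ref{ZebraDyn}(v) the set $R_{k_o}^+$ is forward $w$-invariant and disjoint from $C_{k_o}$, so the orbit never comes back. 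For the backward direction, note that $C_{k_o} \sqcup R_{k_o}^+$ is forward $w$-invariant (by Lemma \ref{ZebraDyn}(iii) and (v)), hence $P \coloneqq \Z \setminus (C_{k_o} \sqcup R_{k_o}^+) = \cdots \sqcup C_{k_o-1} \sqcup R_{k_o-1} \sqcup W_{k_o-1} \sqcup L_{k_o}$ is backward $w$-invariant and misses $C_{k_o}$. Since $w(C_{k_o}) = [m+c,\,q+c]$ we have $C_{k_o} \setminus w(C_{k_o}) = \{m, \dots, m+c-1\}$, and I claim the $w$-preimages of these $c$ points all lie in $L_{k_o}$: this is a short bookkeeping point, since reading off Lemma \ref{ZebraDyn}(i), (ii) and (iv) shows that among the strips $C_{k'}$, $L_{k'}$, $R_{k'}$, $W_{k'}$ the only one besides $C_{k_o}$ itself whose $w$-image can meet $C_{k_o}$ is $L_{k_o}$. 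Hence $w^{-1}(m+i) \in L_{k_o} \subseteq P$, and the strictly-backward orbit of $m+i$ stays in $P$, missing $C_{k_o}$.

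It remains to see that each orbit of $m+i$ meets every $C_k$. First, such an orbit is infinite: a finite cycle would have to return to $C_{k_o}$, contradicting the above. For the forward propagation, after the (finite) chain $\chi_i$ the orbit enters the finite block $D \coloneqq R_{k_o} \sqcup W_{k_o} \sqcup L_{k_o+1}$, and Lemma \ref{ZebraDyn}(i), (ii), (iv) give $wD \subseteq D \sqcup C_{k_o+1}$; an infinite orbit cannot stay in the finite set $D$, so it must enter $C_{k_o+1}$, and iterating with $k_o$ replaced by $k_o+1, k_o+2, \dots$ shows the forward orbit meets $C_k$ for every $k \geq k_o$. The backward propagation is symmetric: using (again from Lemma \ref{ZebraDyn} and bijectivity of $w$) that $w^{-1}$ maps the finite block $R_{k_o-1} \sqcup W_{k_o-1} \sqcup L_{k_o}$ into itself together with the last $c$ points of $C_{k_o-1}$, the backward orbit must leave that block into $C_{k_o-1}$, and iterating gives every $C_k$ with $k \leq k_o$. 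Apart from the one-line counting argument locating the preimages of $\{m,\dots,m+c-1\}$ inside $L_{k_o}$, all of this is routine bookkeeping with the finite strips, so I expect no further obstacle.
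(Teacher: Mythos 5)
Your proof is correct and rests on the same key input as the paper's — the confinement estimates of Lemma \ref{ZebraDyn} and the forward invariance of $R_{k_o}^+$ — but the bookkeeping is organized differently in two places. For the forward propagation, the paper observes $n = \min \cO_n^+$, concludes the forward orbit is infinite and bounded below hence unbounded above, and then invokes \eqref{StripsAreWide} (bounded jump size) to see it must meet every $C_k$ with $k \geq k_o$; you instead run a pigeonhole argument in the finite block $D = R_{k_o} \sqcup W_{k_o} \sqcup L_{k_o+1}$, using $wD \subseteq D \sqcup C_{k_o+1}$. For the backward direction the paper simply says ``symmetric argument involving $w^{-1}$'', implicitly re-running Lemma \ref{ZebraDyn} with left and right swapped; you stay with $w$ throughout, first showing the complement $P$ of $C_{k_o} \sqcup R_{k_o}^+$ is backward invariant, then locating $w^{-1}\{m,\dots,m+c-1\}$ inside $L_{k_o}$ by checking which strips Lemma \ref{ZebraDyn} permits to map into $C_{k_o}$, and finally applying the same finite-block pigeonhole to $R_{k_o-1}\sqcup W_{k_o-1}\sqcup L_{k_o}$. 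Both routes are valid; yours is a bit more self-contained (no appeal to an unstated $w^{-1}$-analogue of the lemma) at the cost of a few extra steps, while the paper's is terser. The disjointness and covering parts agree in substance: the orbit of $m+i$ meets $C_{k_o}$ exactly in the arithmetic chain $\chi_i$ (which is the content of \eqref{FwdOrbits} in the paper), and since the $\chi_i$ partition $C_{k_o}$, both covering and pairwise disjointness follow.
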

\begin{proof} Set $I \coloneqq  \{m, \dots, m+c_a(w)\}$, let $n \in I$ and denote
\[
\cO_n^{\pm} \coloneqq  \{w^kn \mid \pm k \geq 0\} \qand \cO_n \coloneqq  \{w^kn \mid k \in \Z\}.
\]
By Lemma \ref{ZebraDyn}(iii) there exists $l\in \bN$ such that $wn, \dots, w^{l-1}n \in C_{k_o}$ and $w^{l}n \in R_{k_o}$. It then follows from Lemma \ref{ZebraDyn}(v) that all other points of $\cO_n^+$ land in $R_{k_o}^+$. We deduce that
\begin{equation}\label{FwdOrbits}
\cO_n^+ \cap I = \{n\}, \quad \cO_n^+ \cap C_{k_o} = \{n, n+c_a(w), \dots, n + (l-1)c_a(w)\} \qand n = \min \cO_n^+
g  \end{equation}
The latter forces $\cO^+_n$ to be infinite; since it is bounded below, it must be unbounded from above, hence intersect $C_k$ for every $k \geq k_0$ by \eqref{StripsAreWide}. By a symmetric argument (involving $w^{-1}$ instead of $w$), $\cO$ also intersects $C_k$ for every $k<k_o$. 

It also follow from \eqref{FwdOrbits} that for $n' \neq n \in I$ we have $n' \not \in \cO^+_n$ and $n \not \in \cO^+_{n'}$, hence the orbits of $n$ and $n'$ are different, hence disjoint. 
\end{proof}
\begin{proof}[Proof of Lemma \ref{Combinatoric}] By \eqref{StripsAreWide}, every infinite orbit must intersect some $C_k$, hence by Corollary \ref{FwdOrbitAll} it must intersect $C_0$. By the same corollary, there are precisely $c_a(w)$ orbits which intersect $C_0$ and all of them are infinite, hence the lemma follows.
\end{proof}
\end{no}

\subsection{Zebras in the full group}\label{SecZebras}
\begin{no}\label{FTD}
    We now return to the setting of Theorem \ref{GCR1}; in particular $G$ denotes the measurable full group of $\cR$ and $T$ denotes a fixed choice of generator. We denote by $\alpha\colon G \times X \to \mathrm{Sym}(\bZ)$ the action cocycle with respect to $T$ as in \S \ref{ActionCocycle}. We say that $g \in G$ has \emph{total displacement at most $m$} if for almost every $x \in X$ and for every $k \in \Z$ the permutation $\alpha_x(g^k)$ has action radius at most $m$. This means that for almost all $x \in X$ we have 
\[\langle g \rangle x \subset \{T^{-m}x, \dots, T^mx\}.\] If such an $m$ exists we say that $g$ has \emph{finite total displacement}; note that this implies, in particular, that $g$ is periodic, and hence $g$ is conservative.
 \begin{proposition*}
        The set of elements with finite total displacement is dense in $G$.
    \end{proposition*}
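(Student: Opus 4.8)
The plan is to deduce this from the density of the periodic elements in $G$ (recorded in \S\ref{SubsecMFG}), the idea being to truncate a periodic element so that all of its orbits become confined to a bounded window inside the $T$-orbit through each point. Fix $g \in G$ and $\epsilon > 0$. Since every element of $G$ is a $d$-limit of periodic ones, by the triangle inequality it suffices to $\epsilon$-approximate an arbitrary \emph{periodic} element, so I may assume $g$ itself is periodic; in particular almost every $x \in X$ has finite $g$-orbit.

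For such a $g$ the orbit $\langle g\rangle x$ of almost every $x$ is a finite subset of the $T$-orbit of $x$; parametrising that orbit by $\bZ$ with $x \leftrightarrow 0$, it corresponds to a finite set $S_x \subset \bZ$ with $0 \in S_x$, and I set $\mathrm{sp}(x) \coloneqq \max S_x - \min S_x$, a quantity depending only on the $g$-orbit of $x$. The key observation is that the level sets
\[
Y_m \coloneqq \bigcup_{j_0 = -m}^{0}\ \bigcap_{k \in \bZ}\ \bigcup_{i=0}^{m}\ \{x \in X \mid g^k(x) = T^{j_0+i}(x)\} \qquad (m \in \bN)
\]
are Borel, increase with $m$, and are $g$-invariant: if $x \in Y_m$ with witness $j_0$ and $g(x) = T^{j_1}x$ then $j_1 \in [j_0, j_0+m]$, so $g(x) \in Y_m$ with witness $j_0 - j_1 \in [-m, 0]$. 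Moreover $x \in Y_m$ holds exactly when $\langle g\rangle x$ is finite with $\mathrm{sp}(x) \le m$, and in that case $\langle g\rangle x \subset \{T^{-m}x, \dots, T^{m}x\}$. Since $g$ is periodic, $\bigcup_m Y_m$ is conull, so I can fix $m_0$ with $\mu(X \setminus Y_{m_0}) < \epsilon$.

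Finally I would define $h \in \Aut(X,[\mu])$ by $h \coloneqq g$ on $Y_{m_0}$ and $h \coloneqq \id$ on $X \setminus Y_{m_0}$. As $Y_{m_0}$ is Borel and $g$-invariant, $h$ is a Borel automorphism with $h(x) \in [x]_\cR$ for all $x$, hence an element of $G$ (cf.\ \S\ref{MFG}), and $d(g, h) \le \mu(X \setminus Y_{m_0}) < \epsilon$. For every $x$ one has $\langle h\rangle x \subset \{T^{-m_0}x, \dots, T^{m_0}x\}$: on $Y_{m_0}$ this is the inclusion above with $m = m_0$, and off $Y_{m_0}$ the $h$-orbit is the singleton $\{x\}$. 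Applying this to $T^n x$ for every $n$ — legitimate on a $T$-invariant conull set — shows that $\alpha_x(h^k)$ has action radius at most $m_0$ for almost every $x$ and every $k \in \bZ$, i.e.\ $h$ has finite total displacement, which finishes the argument.

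I do not expect a genuine obstacle here: the whole argument is elementary once density of periodic elements is granted. The only slightly delicate points are the measurability and $g$-invariance of the truncation sets $Y_m$, both of which are visible from the explicit formula, and the routine verification that the patched map $h$ genuinely defines an element of the full group.
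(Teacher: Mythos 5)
Your proof is correct and follows essentially the same route as the paper's: approximate $g$ by a periodic element, exhaust $X$ by the ascending $g$-invariant sets where the orbit stays in a bounded $T$-window, truncate to the identity off such a set of almost full measure, and note the result lies in $G$ and has bounded total displacement. The only cosmetic difference is that you parametrize these sets by the spread of the orbit rather than by containment in $\{T^{-m}x,\dots,T^mx\}$, which yields the same conclusion.
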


    \begin{proof}
        Let $g \in G$ and $\epsilon > 0$. We need to construct $h \in G$ with finite total displacement and $d(g,h) < \epsilon$. We will assume that $g$ is periodic; since elements with this property are dense this is no loss of generality.
        
        Define $X_m := \{x \in X \mid \langle g \rangle x \subset \{T^{-m}x,\dots,T^{m}x\}\}$. Since almost all orbits of $g$ are finite, we have that
        $X = \bigcup_{m \geq 0} X_m$, and this is an ascending union of $g$-invariant sets.
        We may thus choose $m > 0$ such that $\mu(X_m) > 1 - \epsilon$.
        Then
        \[h(x) = 
        \begin{cases}
            g(x) & x \in X_m\\
            x & x \notin X_m
        \end{cases}
        \]
        has total displacement at most $m$ and satisfies $d(g,h) < \epsilon$.
    \end{proof}
Note that the set of elements with finite total displacement is not $G_\delta$.
\end{no}
\begin{no}\label{ZebrasDense} 
For every $x \in X$ we have a canonical bijection $\iota_x \colon \Z \to [x]_{\cR}$, $n \mapsto T^n x$. Every $B \subset X$ thus defines a subset $B_x := \iota_x^{-1}(B \cap [x]_{\cR}) \subset \bZ$, and the intervals in $B_x$ form the black stripes of a zebra strip decomposition $\cZ_x(B)$ of $\bZ$.

We now say that $(g_1, \dots, g_n) \in G^n$ is a \emph{dazzle of zebras} of \emph{uniform strip width} $\geq M$  if
there exists a subset $B \subset X$ of positive measure such that for almost every $x \in X$ the associated zebra strip decomposition $\cZ_x = \cZ_x(B)$ satisfies $s(\cZ_x) \geq M$ and $\alpha_x(g_1), \dots, \alpha_x(g_n)$ are zebra permutations of type $\cZ_x$. We can then upgrade Proposition \ref{FTD} as follows:
\begin{proposition*}
For every $n \in \bN$ and $\epsilon > 0$ there exists $m \in \bN$ such that for all $M > m$ and all $(h_1, \dots, h_n) \in G^n$ there exists a dazzle of zebras $(f_1, \dots, f_n)$ of total displacement $\leq m$ and uniform strip width $\geq M$ such that $d(f_j, h_j) < \epsilon$ for all $j \in \{1, \dots, n\}$.
\end{proposition*}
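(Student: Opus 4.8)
The plan is to bootstrap Proposition~\ref{FTD}: first replace each $h_j$ by a finite-total-displacement approximant, and then perform surgery near a very sparse marker set so as to turn these approximants into zebra permutations of one common type with huge black stripes. Given $n$ and $\epsilon$, apply Proposition~\ref{FTD} to each $h_j$ to obtain $g_j \in G$ with $d(g_j,h_j) < \epsilon/2$ and finite total displacement, and let $m_0$ be a common displacement bound, so that $\langle g_j\rangle x \subset \{T^{-m_0}x,\dots,T^{m_0}x\}$ for almost every $x$ and every $j$; in particular each $g_j$ is periodic. We then fix an integer $m > m_0$ (depending on $n$, $\epsilon$ and $m_0$) and establish the claim for all $M > m$.

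The heart of the construction is a \emph{common zebra skeleton}: a Borel set $B \subset X$ of positive, but arbitrarily small, measure such that for almost every $x$ the induced zebra strip decomposition $\cZ_x = \cZ_x(B)$ has every black stripe of length $\geq M$ and every white stripe of length $\leq m$. One produces $B$ from a Borel complete section $C$ of $T$ all of whose $T$-gaps lie in a short window just above $M$: a gap of length $\ell$ contributes a black stripe of length $M$ followed by a white stripe of length $\ell - M \leq m$, and $B \coloneqq \bigcup_{i=0}^{M-1}T^iC$. Such a section (with gaps confined, say, to $\{N,N+1\}$ for a prescribed $N$) is obtained by a gap-control refinement of the marker lemma for aperiodic Borel $\bZ$-actions, which applies because $\cR$ is ergodic and $\mu$ is non-atomic. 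The extra requirement that the black levels $B = \bigcup_{i<M}T^iC$ --- and hence, by the Radon--Nikodym lemma of \S\ref{par:nonsingular on ma} applied to the finitely many $T^i$ with $|i|\leq m_0$, also $\bigcup_{|i|\leq m_0}T^iB$ --- carry as little $\mu$-mass as we wish is where one uses that $[\mu]$ contains no $T$-invariant probability measure (Krieger type $\neq \II_1$): by Proposition~\ref{WWTypes1}, equivalently Krengel's Theorem~\ref{ThmKrengel}, together with the non-singular Rokhlin lemma \cite[Theorem~2.9]{DanilenkoSilva2023}, one may place the markers along the ``light'' part of the orbits.

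Given such a $B$, define $f_j$ by $f_j \coloneqq \mathrm{id}$ on $B$ and, on each white stripe $W$ of $\cZ_x(B)$, by letting $f_j|_W$ be the bijection of $W$ obtained from the partial map $y \mapsto g_j(y)$ on $\{y\in W : g_j(y)\in W\}$ by completing it to a permutation of $W$ canonically (hence measurably). By the extension principle for measurable full groups (\S\ref{MFG}) this yields $f_j \in G$; by construction $\alpha_x(f_j)$ is a zebra permutation of type $\cZ_x(B)$ for almost every $x$, so $(f_1,\dots,f_n)$ is a dazzle of zebras of uniform strip width $\geq M$, and, being supported on white stripes of length $\leq m$, $f_j$ has total displacement $\leq m$. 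Since no $g_j$-orbit can cross an entire black stripe --- these have length $\geq M > m_0$ --- a point of $W$ can only be sent by $g_j$ into $W$ together with the two adjacent black stripes, so one checks $\{x : f_j(x)\neq g_j(x)\} \subseteq B \cup g_j^{-1}(B) \subseteq B \cup \bigcup_{|i|\leq m_0}T^iB$, which by the choice of $\mu(B)$ has measure $< \epsilon/2$; hence $d(f_j,h_j) \leq d(f_j,g_j)+d(g_j,h_j) < \epsilon$ for every $j$.

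Step~1 and the surgery/Radon--Nikodym bookkeeping are routine; the main obstacle is the zebra skeleton. One must exhibit a Borel complete section whose gaps exceed the prescribed large $M$ by only a bounded amount while, at the same time, the first $M$ levels above each marker carry arbitrarily small $\mu$-mass: the former is a sharpening of the marker lemma, the latter is exactly the point at which the non-invariance of $\mu$ under $T$ is indispensable, and reconciling the two in a Borel, measure-controlled fashion --- and, if one insists on $m$ depending on $n,\epsilon$ alone, absorbing the approximants' displacement bound $m_0$ into the scheme --- is where the real work lies.
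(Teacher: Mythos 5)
Your strategy starts from the same place as the paper --- approximate each $h_j$ via Proposition~\ref{FTD} by something of finite total displacement, build a black set $B$, modify to get $f_j$, and bound $d(f_j,h_j)$ by the measure of a neighbourhood of $B$ --- but the way you build $B$ imports a requirement the paper does not need, and that requirement is fatal. You insist that every white stripe have length at most $m$, so that an \emph{arbitrary} extension of $g_j$ restricted to a white stripe automatically has total displacement $\leq m$. This forces $B=\bigcup_{i<M}T^iC$ to cover at least a fraction $M/(M+m)$ of each $T$-orbit. The proposition carries no Krieger-type hypothesis and the paper needs it in type $\II_1$ as well (for parts (1)--(4) of Theorem~\ref{GCR1}); but in type $\II_1$ the invariant measure gives $\mu(B)\geq M/(M+m)\approx 1$, while your distance estimate requires $\mu\bigl(\bigcup_{|i|\leq m_0}T^iB\bigr)<\epsilon/2$. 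These cannot both hold. You spot the tension and retreat to the non-$\II_1$ case, but there the gap-controlled marker set whose $M$-level tower has arbitrarily small mass is exactly the step you defer as ``where the real work lies''; it is not established, and without it the proof does not close even in types $\II_\infty$ and $\III$.

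The paper avoids all of this by not controlling the white-stripe width at all. Take $B=A\cup TA\cup\dots\cup T^MA$ over a single Borel set $A$ with $0<\mu(A)<\delta$: then $\mu(B)$ is as small as you like, yet (using conservativity of $T$ so that the black stripes are genuine finite intervals) every maximal black stripe contains a window of length $M+1$ whose left endpoint lies in $A$, giving $s(\cZ_x)\geq M$. And instead of an ad hoc permutation of each white stripe, take the \emph{first return map} $f_j=(h_j)_{X\setminus B}$: it fixes $B$ pointwise by definition; its orbits are subsets of $h_j$-orbits, so it inherits total displacement $\leq m$ with no constraint on the white stripes; and since $m<M$, no $h_j$-orbit can cross a black stripe, so the first return from a white stripe $W$ lands in $W$ again --- $f_j$ preserves each white stripe however long it is. The estimate $d(h_j,f_j)\leq\mu(B\cup h_j^{-1}B)$ of~\eqref{ReturnDistance}, combined with the Radon--Nikodym choice of $\delta$, then gives $d(h_j,f_j)<\epsilon$. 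The point to internalize is that the surgery must be canonical (first return) rather than arbitrary, precisely so that one need not cap the white-stripe length.
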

For the proof of the proposition we need some general properties of first return maps. If $g \in G$ is conservative, then for every positive measure subset $A \subset X$ the first-return map to $A$ is well-defined, and we an extend it to a non-singular transformation of $X$ by setting
\[
g_A(x) := \begin{cases}
    g^k(x), & \text{if } x \in A \text{ and } k = \min\{k \geq 1 \mid g^k(x) \in A\}, \\
    x, & \text{else.}
\end{cases}
\]
Note that if $g$ has total displacement at most $M$, then so does $g_A$. Moreover, since $g_A(x) = g(x)$ for all $x \in A \cap g^{-1}(A)$ we have
$d(g, g_A) \leq 1 - \mu(A \cap g^{-1}(A))$. In particular, if $\mu(B) < 1$, then
then applying this to $A := X \setminus B$ shows that
\begin{equation}\label{ReturnDistance}
   d(g, g_{X \setminus B}) \leq \mu(B \cup g^{-1}B). 
\end{equation}
\begin{proof}[Proof of Proposition \ref{ZebrasDense}] Fix $\eps > 0$ and 
$(h_1, \dots, h_n) \in G^n$. By Proposition \ref{FTD} we may assume that $h_1, \dots, h_n$ all have total displacement $\leq m$. We may also assume that $\eps < 1$.

By Lemma \ref{par:nonsingular on ma} we find $\delta > 0$ with the following property: If $A \subset X$ is Borel with $0< \mu(A) < \delta$, then for all $h \in \{e, h_1, \dots, h_n\}$ the sets $h^{-1}A, h^{-1}TA, \dots, h^{-1}T^M A$ have measure at most $\eps/2(M+1)$. Now fix $A$ with $0 < \mu(A) < \delta$ and define $B := A \cup \dots \cup T^M A$. Then we have 
\[
\mu(B \cup h_j^{-1}(B))\leq \sum_{i=0}^M \mu(T^iA) + \mu(h_j^{-1}T^iA)  \leq \eps \quad \text{for all $j \in \{1, \dots, n\}.$}
\]
Given $j \in \{1, \dots, n\}$ we now define $f_j \coloneqq (h_j)_{X \setminus B}$ and deduce from \eqref{ReturnDistance} that $d(h_j, f_j) \leq \eps$. Moreover, each $f_j$ has total displacement at most $m$. It remains to show that $(f_1, \dots, f_n)$ is a dazzle of zebras of uniform strip width $\geq M$.

Since $B = A \cup TA \cup \dots \cup T^M A$, for every $x \in A$ the associated zebra strip decompositions $\cZ_x = \cZ_x(B)$ contains a black interval containing $[0, M]$. Since $T$ is ergodic and $\mu$ is non-atomic, $T$ is convervative, and hence the $T$-orbit of $x$ either stays inside $B$ or reenters $A$ at some other time almost surely. This implies that $s(\cZ_x) \geq M$ for almost all $x \in X$.

Moreover, by construction $\alpha_x(f_1),\dots,\alpha_x(f_n)$ act trivially on the black stripes, and since these are of width $\geq M > m$ and each $\alpha_x(f_j)$ has action radius at most $m$, they must also preserve each white strip.
\end{proof}
\begin{remark*} The proof actually also yields the following statement: If $h_1, \dots, h_n$ have total displacement $\leq m$, we can choose $f_j \coloneqq (h_j)_{X \setminus B}$ where $B = A \cup \dots \cup T^M A$ for every Borel set $A$ of sufficiently small positive measure.
\end{remark*}
\end{no}
\begin{no}\label{ecproof} We can now prove Theorem \ref{ECMainThm}, which implies Part \eqref{itm:nonatomic ec} of Theorem \ref{GCR1}.
\begin{proof}[Proof of Theorem \ref{ECMainThm}]
In view of Corollary \ref{ConOpen2} it remains to show that for every $w \in \Gamma$ the subset 
\[
\Omega_w \coloneqq  \{\rho \in \mathrm{Hom}_T(\Gamma, G) \mid \rho(w) \text{ is conservative}\} \subset \mathrm{Hom}_T(\Gamma, G)
\]
is dense. Thus fix $w \in \Gamma$ and choose $n$ such that $w$ only contains the letters $a, b_1, \dots, b_n$. Given $j \in \{1, \dots, n\}$ we define $g_j := \rho(b_j)$. Now fix $\eps > 0$.

By Proposition \ref{ZebrasDense} we find $m \in \bN$ and a dazzle of zebras $(f_1, \dots, f_n) \in G^n$ of total displacement $\leq m$ and uniform strip width $M > 3 m \|w\|$ such that $d(f_j, g_j) \leq \eps$. Denote by $\rho' \in \Hom_T(F_r,G)$ the constrained representation with $\rho'(b_j)=f_j$ for $j \in \{1, \dots, n\}$ and $\rho'(b_j) = \rho(b_j)$ for $j>n$.

Now for almost every $x \in X$ the permutations $\alpha_x(f_1), \dots, \alpha_x(f_m)$ are zebra permutations of the same type $\cZ_x$. Moreover, we have
\[
R \coloneqq   \sup\{R(\alpha_x(f_j)) \mid j \in\{1, \dots, n\}\} \leq m
\implies \|w\| < \frac{M}{3m}\leq \frac{s(\cZ_x)}{3R}.\]
It thus follows from Lemma \ref{Combinatoric} that every $T$-orbit contains exactly $|c_a(w)|$ infinite $\rho'(w)$-orbits, hence $\rho'(w)$ is conservative by Proposition~\ref{Conservativity}. This shows that $\rho' \in \Omega_w$, and since $\rho'$ is $\epsilon$-close to $\rho$, the theorem follows.
\end{proof}
\begin{remark*} If $c_{a}(w)=0$, then in view of Remark \ref{balanced} this proof gives the stronger conclusion that 
\[ 
\Omega_w^{\mathrm{per}} \coloneqq  \{\rho \in \mathrm{Hom}_T(\Gamma, G) \mid \rho(w) \text{ is periodic}\}\]
is dense. This also holds for many other words of a special form. Denote by $D \subset G$ the dense $G_\delta$ subset of periodic elements. For simplicity, we only present the case $r = 2$ and denote $b_1 = b$.
\begin{enumerate}
    \item \label{itm:primitive} If $w$ is part of a free basis of $\Gamma$ together with $a$, then the evaluation map
    $w(T,\cdot) \colon G \to G$ is a homeomorphism.
    Therefore, $(w(T,\cdot))^{-1}(D)$ is dense $G_\delta$. 
    \item \label{itm:balanced} For words of the form $w = a^m b^n a^{m'}$ with $m,n,m' \in \bZ$ and $n \neq 0$, we have $w(T,\cdot) = w'(T,\cdot) \circ p^{n}$, where $p^n(g) = g^n$ and with $w'$ as in the last point. We again have that $w(T,\cdot)^{-1}(D)$ is dense $G_\delta$ since $D = ((p^n)^T)^{-1}(D)$.
    \item Let now $w'$ be as in (\ref{itm:primitive}) and $w''$ as in (\ref{itm:balanced}). If we substitute every $b$ in $w''$ by $w'$, we get a word $w$ with $w(T,\cdot) = w''(T,\cdot) \circ w'(T,\cdot)$ and hence $w(T,\cdot)^{-1}(D)$ is dense. The words $w$ obtained satisfy that $c_a(w) = (m + m') \cdot c_b(w)$.
\end{enumerate}
For powers of $a$ and their conjugates, by definition, almost all orbits are infinite. We do not know whether these are the only words $w$ such that $w(T,\cdot)^{-1}(D)$ is not dense.
\end{remark*}
\end{no}
\subsection{Further properties of generic constrained representations}\label{SecFurther}
\begin{no}
The approximation argument used in the proof of Theorem \ref{ECMainThm} (and hence Part \eqref{itm:nonatomic ec} of Theorem \ref{GCR1}) can also be used to prove Parts \eqref{itm:nonatomic inj} and \eqref{itm:nonatomic afht} of Theorem \ref{GCR1}. For example, the first statement in \eqref{itm:nonatomic afht} can be proved as follows: 
\begin{proposition*}
    The set of $T$-constrained representations $\rho \colon \Gamma \to G$ such that $\rho(\Gamma)$ acts amenably (in the sense of Greenleaf) on almost every orbit is a dense $G_\delta$-set in $\mathrm{Hom}_T(\Gamma, G)$.
\end{proposition*}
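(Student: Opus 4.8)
\emph{Plan.} The $G_\delta$-part is already available: it is precisely the Corollary following Proposition~\ref{prop:Gdelta properties in the Chabauty}, applied to the set $\cA$ of those $\rho \in \Hom_T(\Gamma,G)$ for which $\Gamma$ acts amenably (in the sense of Greenleaf) on the orbit $\rho(\Gamma)x$ for almost every $x$. So the whole task is to prove that $\cA$ is \emph{dense}, and the point is that this follows almost immediately from Proposition~\ref{FTD}: a constrained representation all of whose generators have finite total displacement acts amenably on every orbit. Note that for a constrained representation $\rho$ one has $\rho(\Gamma)x = [x]_\cR$ for a.e.\ $x$, since $\rho(a)=T$ already generates the whole $\cR$-class; so ``the orbit'' is just $[x]_\cR \cong \bZ$.

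\emph{Approximation step.} Fix $\rho \in \Hom_T(\Gamma,G)$. Every neighbourhood of $\rho$ contains a set $U = \{\eta \mid d(\eta(b_j),\rho(b_j)) < \epsilon \text{ for } j = 1,\dots,N\}$ for suitable $N \in \bN$ and $\epsilon > 0$. By Proposition~\ref{FTD} the elements of $G$ of finite total displacement are dense, so I pick $f_j \in G$ of finite total displacement with $d(f_j,\rho(b_j)) < \epsilon$ for $j \le N$, and set $f_j \coloneqq \id_X$ for $j > N$. Let $\rho' \in \Hom_T(\Gamma,G)$ be the constrained representation with $\rho'(b_j) = f_j$. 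Then $\rho' \in U$, and it only remains to check that $\rho' \in \cA$.

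\emph{Amenability of $\rho'$.} Choose $m_j \in \bN$ with $\langle f_j\rangle x \subset \{T^{-m_j}x,\dots,T^{m_j}x\}$ for a.e.\ $x$ (so $m_j = 0$ for $j > N$), and restrict attention to the conull set where this holds for every $j \le N$. Fixing such an $x$, transport the $\Gamma$-action on $[x]_\cR$ to an action on $\bZ$ via $\iota_x \colon n \mapsto T^n x$; under this identification $a$ acts by the shift $n \mapsto n+1$, while $b_j$ acts by a permutation of $\bZ$ displacing every integer by at most $m_j$. A bounded-displacement permutation of $\bZ$ alters an interval only within a bounded distance of its endpoints, so for $F_k \coloneqq \{-k,\dots,k\}$ and every generator $s$ the cardinality $|s F_k \triangle F_k|$ is bounded independently of $k$, whereas $|F_k| = 2k+1 \to \infty$. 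Thus $(F_k)_k$ is Følner for the generating set; telescoping along a word expressing a given $\gamma \in \Gamma$ (partial products act bijectively on $[x]_\cR$) promotes this to the existence of $(F,\epsilon)$-almost invariant sets for every finite $F \subset \Gamma$ and every $\epsilon > 0$, i.e.\ $\Gamma$ acts amenably on $[x]_\cR$ in the sense of Greenleaf. Since this holds for a.e.\ $x$, we get $\rho' \in \cA$, so $\cA$ is dense and the proposition follows.

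\emph{Main difficulty.} There is essentially none. The only thing worth isolating is that — in contrast to conservativity, faithfulness and high transitivity, which genuinely require the dazzle-of-zebras construction of \S\ref{ZebrasDense} — Greenleaf amenability of the orbit actions uses nothing beyond \emph{finite total displacement} of the generators, after which everything reduces to the elementary fact that a bounded-displacement permutation of $\bZ$ preserves long intervals up to a bounded boundary error, together with the standard telescoping upgrade from Følner-ness for the generating set to the $(F,\epsilon)$-condition.
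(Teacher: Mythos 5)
Your proof is correct and is a mild simplification of the paper's. The paper obtains density by invoking Proposition~\ref{ZebrasDense} (the dazzle-of-zebras upgrade of Proposition~\ref{FTD}): the $f_j$ are zebra permutations, hence preserve arbitrarily long intervals \emph{exactly}. You observe that Proposition~\ref{FTD} alone already suffices, because a permutation $\pi$ of $\bZ$ with displacement bounded by $m$ satisfies $|\pi F_k \triangle F_k| \le 4m$ for $F_k = \{-k,\dots,k\}$ and $k \ge m$, so $(F_k)$ is a F\o lner sequence for the generators, and the standard telescoping bound $|\gamma F_k \triangle F_k| \le \sum_i |s_i F_k \triangle F_k|$ for $\gamma = s_1\cdots s_\ell$ upgrades this to the Greenleaf condition for all of $\Gamma$. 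A second small gain in your version: by setting $\rho'(b_j) = \id$ for $j > N$ you make the whole representation $\rho'$ act with bounded-displacement generators, so you get density of the set where $\rho(\Gamma)$ itself acts amenably on a.e.\ orbit in one stroke, whereas the paper establishes density of each set $\{\rho : \rho(\Gamma_r) \text{ acts amenably a.e.}\}$ and then implicitly intersects over $r$ via Baire. Both routes are sound; the paper's use of zebras here keeps the argument uniform with the other proofs in \S\ref{SecFurther}, but for amenability in isolation your route is leaner. The $G_\delta$ part is, as you note, already supplied by the Corollary following Proposition~\ref{prop:Gdelta properties in the Chabauty}, so no work is needed there.
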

\begin{proof} The $G_\delta$ property was already established in Corollary \ref{prop:Gdelta properties in the Chabauty}; for density we argue 
as follows. 

Let $\eps > 0$, $r \leq \mathrm{rk}(\Gamma)$ a positive integer and $\rho \in \mathrm{Hom}_T(\Gamma, G)$. We need to show that  
there exists a representation $\rho'$, which is $\eps$-close to $\rho$ and such that $\rho'(\Gamma_r)$ acts amenably. Given $j \in \{1, \dots, r-1\}$ we set $g_j := \rho(b_j)$, so that $\rho(\Gamma_r) = \langle T, g_2, \dots, g_r \rangle$. By Proposition \ref{ZebrasDense} we find $m \in \bN$ and a dazzle of zebras $(f_1, \dots, f_{r-1}) \in G^{r-1}$ of total displacement $\leq m$ and uniform strip width $M > m$ such that $d(f_j, g_j) \leq \eps$. 

Now let $\rho' \in \Hom_T(\Gamma, G)$ such that $\rho'(b_j)=f_j$ for $j \in \{1, \dots, r-1\}$ and $\rho'(b_j) = \rho(b_j)$ for $j\geq r$; then $\rho'$ is $\eps$-close to $\rho$.
The key observation is that Zebra permutations preserve arbitrarily large intervals (given by unions of consecutive white and black stripes). 
For almost every $x \in X$ the permutations $\alpha_x(f_j)$ are compatible zebra permutations, hence there exists sequences $n_k,m_k \to \infty$ such that the sets
\[
C_k(x) := \{T^j x \mid j \in [n_k, m_k]\} \subset [x]_{\cR}
\]
are invariant under all $\alpha_x(f_j)$. Since these sets are also F\o lner sets for the shift $\alpha_x(T)$ we deduce that the action of $\Gamma_r$ on $[x]_{\cR}$ under $\rho'$ is amenable.
\end{proof}
\end{no}
\begin{no} In order to establish also the rest of Parts \eqref{itm:nonatomic inj} and \eqref{itm:nonatomic afht} of Theorem \ref{GCR1}, we need to be able to insert permutations into a given zebra. For this we observe that if $A \subset X$ is a Borel subset and $I \subset \Z$ is interval such that the sets $T^iA$ with $i \in I$ are disjoint, then we can embed $\Sym(I)$ into $G$ via
\[
\iota_{A,I} \colon \Sym(I) \to G, \quad \iota_{A,I}(\pi)(x) = \begin{cases}
    T^{\pi(i)}(x) & \text{if }x \in T^i A \text{ for some } i \in I \\
    x & \text{else}.
\end{cases}
\]
If we now set $B := \bigsqcup_{i \in I} T^i A$, then for all $g \in G$ and $\pi \in \mathrm{Sym}(I)$ we have $g(x) =  \iota_{A, I}(\pi) \circ g_{X \setminus B} (x)$ for all $x \in B \cup g^{-1}B$. This implies that
\[
d(g, \iota_{A,I}(\pi) \circ g_{X \setminus B}) \leq \mu(B \cup g^{-1}(B)).
\]
Moreover, $\alpha(\iota_{A,I}(\pi) \circ g_{X \setminus B},x)|_I = \pi$.

Using this technique we can ``locally insert" permutations into constrained representations into $G$ in the following sense.

\begin{lemma*}\label{lem:instert words into constraineds}
    Let $\rho \in \Hom_T(\Gamma,G)$ and $I \subset \Z$.
    Let $r \geq 1$ and $w \in \Gamma_r \subset \Gamma$ be any word.
    Then for all $\pi_1,\dots,\pi_{r-1} \in \Sym(\Z)$, every subset $A \subset X$ with $\mu(A) > 0$ and every $\epsilon > 0$ there exist $\rho' \in \Hom_T(\Gamma,G)$ and $A' \subset A$ of positive measure such that
    \begin{enumerate}
        \item for all $i,j \in I$ with $i \neq j$ we have $\mu(T^iA' \cap T^jA') = 0$,
        \item for almost every $x \in A'$ we have $\alpha(\rho'(w),x)|_I = w(\sigma,\pi_1,\dots,\pi_{r-1})|_I$, and
        \item $d(\rho',\rho)<\epsilon$.
    \end{enumerate}
\end{lemma*}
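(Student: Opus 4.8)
The plan is to realize $\rho'$ by altering $\rho$ only on a small set shaped like a $T$-Rokhlin tower, chosen so that one fixed finite window in the $T$-coordinate sees exactly the permutations $\pi_1,\dots,\pi_{r-1}$ (as far as $w$ can detect). \emph{Step 1 (the footprint of $w$).} We take $I$ finite, which is forced: if $I$ were infinite then $\{T^iA'\}_{i\in I}$ pairwise disjoint with $\mu(A')>0$ would be a wandering set of positive measure, contradicting conservativity of $T$. Write $w=s_1\cdots s_L$ with $s_k\in\{a^{\pm1},b_1^{\pm1},\dots,b_{r-1}^{\pm1}\}$ and let $\pi\colon\Gamma_r\to\Sym(\bZ)$ be the homomorphism with $\pi(a)=\sigma$ (the shift) and $\pi(b_j)=\pi_j$. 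For $i\in I$ put $i_0:=i$ and $i_k:=\pi(s_{L-k+1}\cdots s_L)(i)$, and set $J:=\{i_k:i\in I,\ 0\le k\le L\}$, a finite set with $I\subset J$; fix a finite interval $N\supset J\cup\bigcup_j\pi_j(J)$. For each $j$ let $J_j\subset J$ be the set of trajectory points at which the cocycle of $w$ is forced: all $i_k$ with $s_{L-k}=b_j$, together with all $i_{k+1}$ with $s_{L-k}=b_j^{-1}$. Since $\pi_j$ is an injection, every such constraint reads ``the value at $n\in J_j$ equals $\pi_j(n)$'', so the constraints are mutually consistent and the resulting partial injection $J_j\to N$ extends to a permutation $\tau_j\in\Sym(N)$ with $\tau_j|_{J_j}=\pi_j|_{J_j}$.

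\emph{Step 2 (the tower).} Since $\mu$ is non-atomic and $\cR$ is hyperfinite and ergodic, $T$ is aperiodic and ergodic, so Lemma \ref{lem:many disjoint translates}, applied with $M:=\operatorname{diam}(N)$, produces for any prescribed $\delta>0$ a positive-measure set $A'\subset A$ with $T^0A',\dots,T^MA'$ pairwise measurably disjoint, hence $\{T^kA':k\in N\}$ pairwise disjoint, and with $\mu\bigl(\bigcup_{k\in N}T^kA'\bigr)<\delta$. Writing $B:=\bigsqcup_{k\in N}T^kA'$ and shrinking $A'$ further using the Radon--Nikodym lemma recalled in \S\ref{par:nonsingular on ma}, we may also assume $\mu(B\cup\rho(b_j)^{-1}B)<\epsilon$ for every $j\in\{1,\dots,r-1\}$. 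Since $I\subset J\subset N$, conclusion (1) already holds.

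\emph{Step 3 (the representation and verification).} Define $\rho'(a):=T$, $\rho'(b_j):=\rho(b_j)$ for $j\ge r$, and for $1\le j\le r-1$
\[
\rho'(b_j):=\iota_{A',N}(\tau_j)\circ(\rho(b_j))_{X\setminus B}\in G,
\]
which makes sense because $B$ is a disjoint union of $T$-translates of $A'$ over the interval $N$ and $\tau_j$ permutes $N$. The displacement estimate recorded just before the lemma gives $d(\rho'(b_j),\rho(b_j))\le\mu(B\cup\rho(b_j)^{-1}B)<\epsilon$, hence $d(\rho',\rho)<\epsilon$, which is (3). For (2): if $x\in A'$ and $k\in N$ then $T^kx\in B$, so $(\rho(b_j))_{X\setminus B}$ fixes $T^kx$ and therefore $\rho'(b_j)(T^kx)=T^{\tau_j(k)}x$. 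Now fix $x\in A'$ and $i\in I$; an induction on $k$ shows $\rho'(s_{L-k+1}\cdots s_L)(T^ix)=T^{i_k}x$: the step $s_{L-k+1}=a^{\pm1}$ is immediate since $\rho'(a)=T$, and the step $s_{L-k+1}=b_j^{\pm1}$ follows from the displayed identity together with $\tau_j|_{J_j}=\pi_j|_{J_j}$, using that the trajectory point entering this step lies in $J_j$ — which is exactly how $J_j$ was defined. At $k=L$ this reads $\rho'(w)(T^ix)=T^{\pi(w)(i)}x$, i.e.\ $\alpha(\rho'(w),x)(i)=\pi(w)(i)=w(\sigma,\pi_1,\dots,\pi_{r-1})(i)$ for all $i\in I$, which is (2).

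The only genuinely delicate point is the bookkeeping of Step 1: several trajectories (one per point of $I$) may re-enter $J_j$ many times and from several starting points, and one must see that all the resulting demands on $\tau_j$ are of the single form ``agree with $\pi_j$ here'', so that one permutation $\tau_j$ simultaneously works for the whole word $w$ and the whole set $I$. Once this is arranged, Steps 2 and 3 are routine applications of the non-singular Rokhlin lemma and of the return-map and $\iota_{A,N}$ estimates already available in the text.
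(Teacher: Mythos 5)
Your proof is correct and takes a genuinely different (and more hands-on) route than the paper's. Both proofs build a short $T$-Rokhlin tower over a small set $A' \subset A$ and replace each $\rho(b_j)$ by the composition of its first-return map to the complement of the tower $B$ with an inserted permutation $\iota_{A',\,\cdot\,}(\cdot)$; the difference lies in how the inserted permutation is chosen. The paper first reduces to the case where $\pi_1,\dots,\pi_{r-1}$ are finitely supported (density of finitely supported permutations in $\Sym(\Z)$ plus continuity of the word evaluation map), then enlarges $I$ to an interval containing all the supports, after which it directly inserts $\pi_j$ restricted to $[M_1,M_2] = [\min I - \|w\|,\ \max I + \|w\|]$. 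You keep the general $\pi_j$ and instead track the exact trajectory of the points of $I$ under the letters of $w$, producing a finite set $J$, partial injections $\pi_j|_{J_j}$, and extensions $\tau_j \in \Sym(N)$; you then verify the cocycle identity by induction on the length of the prefix of $w$. Both are correct. Your approach avoids the density reduction at the cost of heavier bookkeeping; the paper's is slicker but hinges on that initial reduction.

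One small inaccuracy: your claim that $I$ is \emph{forced} to be finite because otherwise $A'$ would be a wandering set of positive measure is not right. If $\{T^iA'\}_{i\in I}$ are pairwise disjoint for infinite $I$, $A'$ is merely a \emph{weakly} wandering set, and conservative transformations of Krieger type $\II_\infty$ or $\III$ do admit weakly wandering sets of positive measure (Proposition~\ref{WWTypes1}). So condition~(1) alone does not rule out infinite $I$. The lemma should simply be read with $I$ finite, as the paper itself implicitly assumes (it uses $\min I$ and $\max I$), and all applications in the paper use finite $I$.
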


\begin{proof}
    Set $\rho'(b_i) := \rho(b_i)$ for every $i \geq r$.

    Since the set of finitely supported permutations is dense in $\Sym(\Z)$, and by continuity of the evaluation map $w \colon \Sym(\Z)^r \to \Sym(\Z)$,
    we can, without changing the restriction of $w(\sigma,\pi_1,\dots,\pi_{r-1})$ to $I$, assume without loss of generality that $\pi_1,\dots,\pi_{r-1}$ have finite support.
    Note that it suffices to achieve (1) and (2) for some interval $I' \supset I$, so we can assume without loss of generality that $\supp(\pi_i) \subset I$ for every $1 \leq i < r$.
    Define $M_1 := \min(I) - \|w\|$ and $M_2 := \max(I) + \|w\|$, so that if we apply $w$ letter-by-letter to $\sigma,\pi_1,\dots,\pi_{r-1}$, the image of $I$ always remains contained in $[M_1,M_2]$.

    By Lemma \ref{par:nonsingular on ma} there exists $\epsilon' > 0$ such that for every $A'$ with $\mu(A') < \epsilon'$ and every $1 \leq i < r$ we have $\mu(A' \cup \rho(b_i)^{-1}(A')) < \epsilon$.
    By Lemma \ref{lem:many disjoint translates} we can choose $A' \subset A$ such that $T^{M_1}A',\dots,T^{M_2}A'$ are all disjoint and, setting $B := T^{M_1}A' \cup \dots \cup T^{M_2}A'$, we have $\mu(B) < \epsilon'$.
    We now finish defining $\rho'$ by setting $\rho'(b_i) := \iota_{A',[M_1,M_2]} \circ \rho(b_i)_{X \setminus B}$ for $1 \leq i < r$; this does the job.
\end{proof}

\end{no}

\begin{no}
We now recall from Lemma \ref{lem:faithful equals on every orbit} that a representation $\rho\colon \Gamma \to G$ is faithful if and only if it is faithful on every orbit. Thus the space of faithful restricted representations is given by
\[ \Hom^{\mathrm{ff}}_T(\Gamma,G) = 
\bigcap_{w \in \Gamma \setminus \{1\}} \Omega_w, \quad \text{where}\quad \Omega_w \coloneqq \{\rho \in \Hom_T(\Gamma,G) \mid \rho(w) \neq 1\}.
\]
We can now prove Part \eqref{itm:nonatomic inj} and the second statement of Part \eqref{itm:nonatomic afht} of Theorem \ref{GCR1}:
\begin{proposition*}
 The subset    $\Hom^{\mathrm{ff}}_T(\Gamma,G) \subset  \Hom_T(\Gamma,G)$ is a dense $G_\delta$-set.
\end{proposition*}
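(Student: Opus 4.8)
The plan is to argue exactly as in the preceding density proofs. Since $\Hom^{\mathrm{ff}}_T(\Gamma,G) = \bigcap_{w \in \Gamma \setminus \{1\}} \Omega_w$ with $\Omega_w = \mathrm{ev}_w^{-1}(G \setminus \{1\})$, and since the evaluation maps are continuous (\S\ref{Ev}) while $\{1\}$ is closed in the Hausdorff group $G$, each $\Omega_w$ is open; as $\Gamma$ is countable this already shows $\Hom^{\mathrm{ff}}_T(\Gamma,G)$ is $G_\delta$, and by the Baire category theorem it remains to prove that each $\Omega_w$ is dense.

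For density of $\Omega_w$ I would fix $\rho \in \Hom_T(\Gamma,G)$ and $\epsilon > 0$, choose $r \geq 1$ with $w \in \Gamma_r$, and perturb $\rho$ by locally grafting in a permutation action of $\Gamma_r$ on $\bZ$ under which $w$ already moves a point. First I would produce $n_0 \in \bZ$, permutations $\pi_1,\dots,\pi_{r-1} \in \Sym(\bZ)$ and $n_1 \neq n_0$ with $w(\sigma,\pi_1,\dots,\pi_{r-1})(n_0) = n_1$, where $\sigma(n) = n+1$: if $r = 1$ then $w = a^k$ with $k \neq 0$ and one takes the empty list of $\pi$'s with $n_0 = 0$, $n_1 = k$; if $r \geq 2$ then $\Gamma_r$ is a countable non-abelian free group, so by the faithfulness clause of Theorem \ref{thm:S_infty} there is a faithful $\sigma$-constrained representation $\tau\colon \Gamma_r \to \Sym(\bZ)$, and since $w \neq 1$ the permutation $w(\sigma,\pi_1,\dots,\pi_{r-1}) = \tau(w)$ with $\pi_j := \tau(b_j)$ is non-trivial, so it moves some $n_0$ to $n_1 \neq n_0$. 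Then I would apply Lemma \ref{lem:instert words into constraineds} to $\rho$ with this $r$, the singleton $I := \{n_0\}$, the permutations $\pi_1,\dots,\pi_{r-1}$, the subset $A := X$ and $\epsilon$; it yields $\rho' \in \Hom_T(\Gamma,G)$ with $d(\rho,\rho') < \epsilon$ and a positive-measure set $A' \subset X$ such that $\alpha(\rho'(w),x)(n_0) = n_1$ for a.e.\ $x \in A'$, i.e.\ $\rho'(w)(T^{n_0}x) = T^{n_1}x$ (see \S\ref{ActionCocycle}). Since $\cR$ is ergodic and $\mu$ non-atomic, $T$ is aperiodic, so for a.e.\ $x$ the map $n \mapsto T^n x$ is injective and hence $T^{n_1}x \neq T^{n_0}x$; thus $\rho'(w)$ is non-trivial on the positive-measure set $T^{n_0}A'$, which gives $\rho' \in \Omega_w$.

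I do not expect a genuine obstacle: the combinatorial work is already encapsulated in Lemma \ref{lem:instert words into constraineds}, and the one external input --- a faithful action of a non-abelian free group on $\bZ$ sending the first generator to the shift --- is furnished by Theorem \ref{thm:S_infty}. The only things to be careful about are that a singleton $I = \{n_0\}$ is an admissible choice in Lemma \ref{lem:instert words into constraineds} (so that its conclusion (1) is vacuous and conclusion (2) directly exhibits a moved point), and that non-triviality of $\rho'(w)$ is extracted from aperiodicity of $T$ rather than from any feature of the perturbed representation.
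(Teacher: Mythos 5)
Your argument is correct and follows the paper's own strategy: show each $\Omega_w$ is open via continuity of evaluation, then establish density by picking a $\sigma$-constrained permutation representation $\tau$ of $\Gamma$ into $\Sym(\bZ)$ under which $w$ moves a point, and grafting it in via Lemma~\ref{lem:instert words into constraineds}. The only cosmetic difference is that you invoke the faithfulness clause of Theorem~\ref{thm:S_infty} to produce $\tau$, whereas the paper simply asserts its existence (which can be done by a direct elementary construction for the single word $w$), but this does not affect correctness.
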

\begin{proof} 
Each of the sets $\Omega_w$ is open because the evaluation map $w \colon G^{r} \to G$ is continuous, hence $\Hom^{\mathrm{ff}}_T(\Gamma,G)$ is $G_\delta$. It remains to show that for every $w \in \Gamma$ the set $\Omega_w \subset \Hom_T(\Gamma,G)$ is dense.
So let $\rho \in \Hom_T(\Gamma,G)$ and $\epsilon > 0$.
Take $\tau \in \Hom_\sigma(\Gamma,\Sym(\Z))$ with $\tau(w)(0) \neq 0$.
By Lemma \ref{lem:instert words into constraineds}, applied with $\pi_i = \tau(b_i)$, we can find $\rho' \in \Hom_T(\Gamma,G)$ and a positive measure set $A$ with $\rho'(w)(A) = T^{\tau(w)(0)}A$ disjoint from $A$ and $d(\rho,\rho') < \epsilon$. This finishes the proof.
\end{proof}

\begin{remark*}
    Le Ma\^itre \cite[Thm. 6.2(1)]{LM:full_groups} showed that faithful constrained representations are dense and $G_\delta$ in the $\II_1$ case; we could also have generalized his argument. We would like to point out that in the more general case it is also stated in \cite[Proposition 4.7]{Mercer1993}; but there are problems in the proof that should not be difficult to fix.
\end{remark*}   

\begin{remark*}
    High transitivity on almost every orbit could be proven similarly.
    We get it from \S\ref{par:high trans from dense} in types $\II_\infty$ and $\III_\lambda$, the type $\II_1$ case is covered in the proof in \cite[Section 6]{EG:generic_irs}.
\end{remark*}
\end{no}

\begin{no}

It was shown by Bowen \cite[Theorem  5.3]{Bowen:free_irs}, in a more general setting, that the set of totally-non-free representations is a $G_\delta$ set in $\Hom(\Gamma,G)$. 
In fact, Bowen shows that the set of totally-non-free representations $\rho$ such that $\rho(a)$ is aperiodic is dense $G_\delta$ in the set of all representations such that $\rho(a)$ is aperiodic. We prove something very similar here.

\begin{proposition*}\label{prop-tnf}
    The set $\{\rho \in \Hom_T(\Gamma,G) \mid \Gamma \text{ acts totally-non-freely via } \rho \}$ is dense $G_\delta$ in $\Hom_T(\Gamma,G)$.
\end{proposition*}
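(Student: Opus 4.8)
The plan is a Baire category argument. First, the set in question is $G_\delta$: this is part of \cite[Theorem~5.3]{Bowen:free_irs}, whose proof of the $G_\delta$-property restricts verbatim to the closed subspace $\Hom_T(\Gamma,G)\subset\Hom(\Gamma,G)$. (Alternatively, one observes that $\rho$ acts totally-non-freely exactly when the countable family $\{\Fix(\rho(\gamma))\}_{\gamma\in\Gamma}$ generates the measure algebra $\malg(X,[\mu])$ modulo $\mu$: a Borel map out of a standard Borel space is injective on a conull set iff the $\sigma$-algebra it generates is the full one modulo $\mu$, and $\sigma(\Stab)$ is generated by the sets $\Fix(\rho(\gamma))=\{x\mid\rho(\gamma)x=x\}$. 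Since $\rho\mapsto\Fix(\rho(\gamma))$ is continuous into $\malg(X,[\mu])$ --- if $\rho(\gamma)$ and $\rho'(\gamma)$ agree off a set of measure $\delta$, so do their fixed-point sets --- for each $B$ in a countable dense subalgebra and each rational $\epsilon>0$ the set of $\rho$ admitting a finite Boolean combination of these $\Fix$-sets within $\epsilon$ of $B$ is open, and generating the measure algebra is the countable intersection of these open sets.) Thus it remains to prove density.

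It is convenient to split total-non-freeness of $\rho$ into two properties: (i) for $\mu$-a.e.\ $x$, $x$ is the only point of its $\cR$-class with stabilizer $\Stab_\Gamma(x)$; and (ii) for $\mu$-a.e.\ $x$, no point \emph{outside} the $\cR$-class of $x$ has stabilizer $\Stab_\Gamma(x)$. Since $\cR$-classes are $\rho(\Gamma)$-orbits, (i) says equivalently that $\Stab_\Gamma(x)$ is self-normalizing for a.e.\ $x$, and the self-normalizing subgroups form the $G_\delta$ subset $\bigcap_{\gamma\in\Gamma}\big(\Env(\{\gamma\})\cup\bigcup_{\delta\in\Gamma}(\Env(\{\delta\})\cap\Miss(\{\gamma\delta\gamma^{-1}\}))\big)$ of $\Sub(\Gamma)$, so (i) is $G_\delta$ in $\rho$ by Corollary \ref{cor:G_del}. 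I would show that each of (i) and (ii) holds on a dense $G_\delta$-subset of $\Hom_T(\Gamma,G)$; since their intersection lies in the ($G_\delta$) set of totally-non-free $\rho$, that set is then dense.

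Property (i) is the fibrewise analogue of Theorem \ref{thm:S_infty}\eqref{itm:atomic tnf}. Using $\rho(a)=T$ and the action cocycle $\alpha$ of \S\ref{ActionCocycle}, the $\Gamma$-action on the $\cR$-class of $x$, transported to $\bZ$ via $\iota_x$, is the $\sigma$-constrained representation $\gamma\mapsto\alpha_x(\rho(\gamma))\in\Sym(\bZ)$ (note $\alpha_x(T)=\sigma$), and $\Stab_\Gamma(T^kx)$ is the stabilizer of $k$ for this action. Writing the failure of (i) as $\bigcup_{k\in\bZ\setminus\{0\}}\bigcup_{n\ge1}\{\rho\mid\mu(\{x\mid\Stab_\Gamma(x)=\Stab_\Gamma(T^kx)\})\ge1/n\}$ reduces density of (i) to: given $\rho$, $\epsilon>0$ and $k\ne0$, produce $\rho'$ with $d(\rho,\rho')<\epsilon$ and $\mu(\{x\mid\Stab_\Gamma(x)\ne\Stab_\Gamma(T^kx)\})>1-\epsilon$. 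This is carried out as in the proof of Theorem \ref{thm:S_infty}\eqref{itm:atomic tnf}, now performed inside a thin Rokhlin tower: by Lemma \ref{lem:instert words into constraineds} one may, at the cost of an $\epsilon$-perturbation and on a set of measure $>1-\epsilon$, install a finitely supported permutation pattern realising a word of the form $a^{-M}b_1a^{M}$ that fixes the reference point of the tower but moves its $k$-th $T$-translate, which separates the two stabilizers there.

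The main obstacle is the density of (ii): one must show that an arbitrarily small perturbation destroys the ``coincidental'' situation in which a positive-measure set of $\rho(\Gamma)$-orbits are isomorphic as $\Gamma$-sets, equivalently in which the stabilizers of a positive-measure set of points share a single $\Gamma$-conjugacy class. The difficulty is that total-non-freeness amounts to the disintegration of $\mu$ over $\Stab_*\mu$ being a.e.\ a Dirac mass --- a genuinely non-local requirement which, unlike (i), is not detected by $\mu\otimes\mu$-almost-everywhere statements about pairs of points, because the offending $\Stab$-fibres may be null yet non-singleton. Here the plan is to adapt Bowen's argument for \cite[Theorem~5.3]{Bowen:free_irs} to the constrained setting: through a countable family of open conditions, each made dense by inserting --- via Lemma \ref{lem:instert words into constraineds}, on a Rokhlin tower of small measure (Lemma \ref{lem:many disjoint translates}) --- an explicit finite piece of the coset action $\Gamma\curvearrowright\Gamma/\Delta_0$ for a suitably ``fresh'' maximal core-free subgroup $\Delta_0$, one forces the $\Gamma$-conjugacy class of $\Stab_\Gamma(x)$ to become an essentially injective function of the $\rho(\Gamma)$-orbit of $x$. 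Together with (i) this yields a conull set on which $\Stab$ is injective, completing the proof.
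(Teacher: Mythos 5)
Your $G_\delta$ argument is fine, and your second, measure-algebraic reformulation — $\rho$ acts totally non-freely iff the sets $\Fix(\rho(\gamma))$ generate the measure algebra — is exactly the right characterization. But you then abandon it for the density part in favour of the ``within orbit'' / ``between orbits'' split, and this is where things go wrong.

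Your condition (ii) (``for a.e.\ $x$, no point outside the $\cR$-class of $x$ has stabilizer $\Stab_\Gamma(x)$'') is not, as stated, a Borel condition on $\rho$: it quantifies over the uncountable complement of the orbit, and the offending $\Stab$-fibres it is meant to exclude can be $\mu$-null. You notice this yourself, but then only offer a plan to ``adapt Bowen's argument'' without carrying it out; as written, (ii) is neither shown to be $G_\delta$ nor shown to be dense, so the proof has a genuine gap. Moreover, even granting a correct version of (ii), the combination of (i) and (ii) is aimed at the stronger statement ``for a.e.\ $x$ the fibre $\Stab^{-1}(\Stab(x))$ is a singleton'', which is not literally what total non-freeness asserts (that there exists a conull set on which $\Stab$ is injective); the two are reconciled only through the measure-algebra formulation you set aside.

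The fix is to use that formulation all the way through. Since $\{A_n\}$ is a countable dense family of positive-measure sets in the measure algebra, the set of totally non-free $\rho$ is $\bigcap_n\bigcup_{w\in\Gamma}\{\rho\mid 0<\mu(A_n\cap\Fix(\rho(w)))<\mu(A_n)\}$, a countable intersection of open sets, and density of each of these open sets is established by exactly the tool you identified: Lemma \ref{lem:instert words into constraineds} applied with $A=A_n$, installing a finitely-supported pattern that either creates a fixed point (if $\rho(w)$ already had full support on $A_n$) or a moving point (if $\rho(w)$ was essentially trivial on $A_n$) on a positive-measure subset, while perturbing $\rho$ so little that the measure of $A_n\cap\supp(\rho(w))$ does not jump to the opposite extreme. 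This is a purely local perturbation and needs no discussion of orbits, self-normalizers, or coset actions. The (i)/(ii) split is not needed, and the self-normalizer argument for (i) becomes superfluous.
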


\begin{proof}
    Let $\{A_n \mid n \geq 0\}$ be a countable dense set in the measure algebra.
    A representation $\rho$ induces a totally-non-free action if and only if it is not constant on any $A_n$. We can write the set of such representations as countable intersection
    \begin{align*}
        \Hom_T^{\mathrm{tnf}}(\Gamma,G) &= \{\rho \in \Hom_T(\Gamma,G) \mid \forall n \geq 0 \colon \, \Stab|_{A_n} \text{ not constant} \} \\
        &= \bigcap_{n \geq 0} \bigcup_{w \in \Gamma} \{\rho \in \Hom_T(\Gamma,G) \mid 0 < \mu(A_n \cap \Fix(\rho(w)) < \mu(A_n)\},
    \end{align*}
    where $\Fix(g)$ denotes the set of fixed points of $g \in G$.
    It is not difficult to see that these sets are open. We want to show that they are also dense. Fix $n$. Take $\rho \in \Hom_T(\Gamma,G)$ and $\epsilon > 0$ with $\epsilon < \mu(A_n)$.
    If $0 < \mu(A_n \cap \supp(\rho(w))) < \mu(A_n)$ there is nothing to do; otherwise there is a case distinction.

    If $\mu(A_n \cap \supp(\rho(w))) = 0$, choose $w \in \Gamma$ and $\tau \in \Hom_\sigma(\Sym(\Z))$ such that $\tau(w)(0) \neq 0$.
    If $\mu(A_n \cap \supp(\rho(w))) = \mu(A_n)$, choose $w \in \Gamma$ and $\tau \in \Hom_\sigma(\Sym(\Z))$ such that $\tau(w)(0) = 0$.
    Apply Lemma \ref{lem:instert words into constraineds} with $\pi_i = \tau(b_i)$ and $A = A_n$.
    Since $\epsilon < \mu(A_n)$, in either case, the representation $\rho'$ obtained satisfies $0 < \mu(A_n \cap \supp(\rho'(w))) < \mu(A_n)$.
\end{proof}
\end{no}

\section{Generic restricted representation are dense}\label{SecDense}

\subsection{A theorem of Krengel, revisited}

\begin{no}\label{ThmKrengel} Let $(X, \mu)$ be a standard probability space and $T \in \mathrm{Aut}(X, [\mu])$. We say that $A \in \malg(X, \mu)$ is \emph{transitive} if its $T$-orbit $\cO_A \coloneqq  \{T^n(A) \mid n \in \Z\}$ is dense in $ \malg(X, \mu)$.
We denote by $\malg(X, \mu)^{\mathrm{trans}} \subset \malg(X, \mu)$ the subset of transitive points. 

Note that if $[\mu]$ contains a $T$-invariant probability measure $\nu$, then $\nu(B) =\nu(A)$ for all $A \in \malg(X, \mu)$ and all $B$ in the orbit closure of $A$. Thus if $\malg(X, \mu)^{\mathrm{trans}}\neq \emptyset$, then $[\mu]$ cannot contain an invariant probability measure, and hence $T$ must be of Krieger type  $\I_\infty, \II_\infty$ or $\III_\lambda$ by Proposition \ref{WWTypes1}. The following result of Krengel provides a very strong converse (under the assumption that $T$ is ergodic); a more general version of this theorem apparently first appears explicitly as \cite[Theorem 3.1]{JonesKrengel1974} in a paper of Jones and Krengel, but as Krengel explains on \cite[page 294]{JonesKrengel1974}, the theorem follows from his earlier paper \cite{Krengel1970}. In any case, it seems that the theorem is not as well-known as it should be.
\begin{theorem*}[Krengel] \label{thm:jknew}
   If $T \in \ns$ is ergodic of type $\I_\infty$, $\II_\infty$ or $\III_\lambda$, then $\malg(X, \mu)^{\mathrm{trans}} \subset \malg(X, \mu)$ is a dense $G_\delta$-subset.
\end{theorem*}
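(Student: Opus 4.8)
The plan is to run a Baire category argument showing that the $\bZ$-action of $T$ on the Polish group $\malg(X,\mu)$ is topologically transitive, and then to read off both assertions. The $G_\delta$ part is routine: since $T$ is not of type $\I_n$ with $n<\infty$, the group $\malg(X,\mu)$ is non-trivial and Polish, and by the Radon--Nikodym Lemma~\ref{par:nonsingular on ma} each map $A\mapsto T^n A$, $n\in\bZ$, is a self-homeomorphism; fixing a countable dense set $\{C_k\mid k\in\bN\}\subseteq\malg(X,\mu)$ one has
\[
\malg(X,\mu)^{\mathrm{trans}}=\bigcap_{k,m}\ \bigcup_{n\in\bZ}\ \bigl\{A\in\malg(X,\mu)\mid \mu(T^n A\triangle C_k)<\tfrac1m\bigr\},
\]
with each innermost set open, so the transitive points form a $G_\delta$-set.

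For density, by Baire it suffices to prove that each inner union above is dense, i.e. the transitivity statement $(\dagger)$: for all $A_0,C\in\malg(X,\mu)$ and $\epsilon>0$ there are $A\in\malg(X,\mu)$ and $n\in\bZ$ with $\mu(A\triangle A_0)<\epsilon$ and $\mu(T^n A\triangle C)<\epsilon$. I would reduce $(\dagger)$ to a single measure-theoretic fact about $T$ alone:
\[
(\star)\qquad \forall\,\epsilon>0\ \ \exists\,E\in\cB,\ n\in\bZ:\qquad \mu(E)<\epsilon\ \text{ and }\ \mu(T^n E)>1-\epsilon.
\]
Indeed, given $A_0,C,\epsilon$, take $E,n$ as in $(\star)$ and put $A:=(A_0\setminus E)\cup(E\cap T^{-n}C)$; then $A\triangle A_0\subseteq E$, and since $T^n A=(T^n A_0\setminus T^n E)\cup(T^n E\cap C)$ agrees with $C$ throughout $T^n E$, we get $T^n A\triangle C\subseteq X\setminus T^n E$, so both defects have measure $<\epsilon$.

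To prove $(\star)$ in types $\I_\infty$ and $\II_\infty$ I would use the $T$-invariant $\sigma$-finite (infinite) measure $\nu\in[\mu]$. Writing $f=d\mu/d\nu>0$, choose $R$ with $\mu(\{f\le 1/R\})<\epsilon$, and set $G:=\{f>1/R\}$, so that $\nu(G)\le R<\infty$ while $\mu(X\setminus G)<\epsilon$. By $\nu$-invariance $\mu(T^n G)=\int_G f\circ T^n\,d\nu$, hence
\[
\frac1N\sum_{n=0}^{N-1}\mu(T^n G)=\int_X f\cdot\Bigl(\frac1N\sum_{n=0}^{N-1}\mathbf{1}_G\circ T^{-n}\Bigr)\,d\nu\ \longrightarrow\ 0,
\]
because, $T$ having no finite invariant measure, the Ces\`aro averages of $\mathbf{1}_G$ along $T$ tend to $0$ pointwise $\nu$-a.e. and are bounded by $1$, while $f\in L^1(\nu)$. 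So $\mu(T^n G)<\epsilon$ for some $n$, and then $E:=X\setminus G$ together with this $n$ witnesses $(\star)$.

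The main obstacle will be $(\star)$ in type $\III$, where no equivalent $\sigma$-finite invariant measure exists, so the above argument must be replaced by the theory of weakly wandering sets. Proposition~\ref{WWTypes1} provides, under our hypothesis, a weakly wandering set $W$ of positive measure with weakly wandering times $0=k_0<k_1<\cdots$; the sets $T^{k_i}W$ are pairwise disjoint, so $\mu(T^{k_i}W)\to 0$, and one would take $E$ to be the complement of a sufficiently long finite union $\bigsqcup_{i\le N}T^{k_i}W$ (arranged to have small $\mu$-measure) and then a deep time $n=k_j$, exploiting additive structure of the time set so that the translated pieces $T^{k_j+k_i}W$ all have tiny measure and $\mu(T^n E)$ becomes close to $1$. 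Turning this into a proof --- essentially, producing weakly wandering sets whose orbit under the distinguished times covers all but an $\epsilon$ of $X$, with a time set one can add within --- is the delicate point; alternatively one may cite this input directly from \cite{Krengel1970,JonesKrengel1974}. Once $(\star)$ holds in all types, $(\dagger)$ and the theorem follow.
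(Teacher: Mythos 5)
Your reduction from transitivity $(\dagger)$ to the concentration statement $(\star)$ is clean and correct, and it is genuinely different from the paper's construction: the paper takes $A := B' \cup T^{-n}U'$ with $B' \subset B$ and $U' \subset U$ chosen so that $\mu(T^nB')$ and $\mu(T^{-n}U')$ are simultaneously small at a density-one set of times $n$, whereas you take $A := (A_0\setminus E)\cup(E\cap T^{-n}C)$ once $E$ is found. In type $\I_\infty$ your argument is fine, and in type $\II_\infty$ it is also correct, though it silently imports a non-trivial fact of infinite ergodic theory: that for a conservative ergodic $\nu$-preserving transformation with $\nu(X)=\infty$ and $\nu(G)<\infty$, the Birkhoff averages $\tfrac1N\sum_{n<N}\mathbf 1_G\circ T^{-n}$ tend to zero $\nu$-a.e. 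This follows from Hopf's ratio ergodic theorem, but it is not elementary.

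The gap is type $\III$, and it is genuine. Your sketch takes $E$ to be the complement of a long finite union $\bigsqcup_{i\le N}T^{k_i}W$ of translates of a weakly wandering set along the wandering times. But the disjointness of the $T^{k_i}W$ gives only $\sum_i\mu(T^{k_i}W)\le 1$; there is no reason for this union to have measure close to $1$, and in general it will not --- a weakly wandering set may have a very thin orbit along its wandering times. So $E$ need not have small measure, and the construction does not produce $(\star)$ as stated. Producing a weakly wandering set (together with a sufficiently additive time set) whose finite orbit exhausts almost all of $X$ is precisely the hard content you would need; it is not a consequence of what you cite.

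Note, however, that $(\star)$ is a direct consequence of the paper's Lemma~\ref{lem:Vplus1}, Corollary~\ref{cor:Vplusdense} and Lemma~\ref{LemmaDensity1}, uniformly over types $\I_\infty$, $\II_\infty$, $\III_\lambda$: by Corollary~\ref{cor:Vplusdense} with $A=X$ there is $V\in\cV(T)$ with $\mu(X\setminus V)<\epsilon$; by Lemma~\ref{LemmaDensity1} the set $\{n\ge 0 \mid \mu(T^nV)<\epsilon\}$ has natural density $1$, hence is nonempty; taking such $n$ and $E:=X\setminus V$ gives $\mu(E)<\epsilon$ and $\mu(T^nE)=1-\mu(T^nV)>1-\epsilon$. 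The point is that the paper replaces both your Hopf-theoretic step in $\II_\infty$ and your problematic exhaustion step in $\III$ by the single combinatorial estimate in Lemma~\ref{lem:Vplus1} (the $r\times n$ diagram argument), which shows that weakly wandering sets have vanishing Ces\`aro averages without assuming they exhaust $X$. This yields a shorter, type-uniform and more elementary proof than patching $(\star)$ case by case.
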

\end{no}
\begin{no} The fact that $\malg(X, \mu)^{\mathrm{trans}} \subset \malg(X, \mu)$ is a $G_\delta$-set is actually not stated in \cite{JonesKrengel1974}, but it is obvious: Given $U \in \malg(X, \mu)$ and $\delta>0$ we define an open subset $\Omega_{U,\delta} \coloneqq  \{A \in \malg(X,\mu) \mid \exists\, n \in \Z \colon \mu(T^n A \triangle U ) < \delta \} 
\subset  \malg(X,\mu)$ and observe that
  \[
   \malg(X, \mu)^{\mathrm{trans}} =  \bigcap_{U \in \cU} \bigcap_{n \in \bN} \Omega_{U,\frac{1}{n}},
 \]  
 where $\cU$ is some countable basis for the $\sigma$-algebra of $X$. Using this representation of $ \malg(X, \mu)^{\mathrm{trans}}$ we can give a quick proof of Theorem \ref{thm:jknew} via Baire category.
 For this let $T$ as in Theorem \ref{thm:jknew} and define
 \[
\cV(T) \coloneqq  \left\{A \in \malg(X,[\mu]) \, \Big| \, \lim_{n \to \infty} \frac{1}{n} \sum_{k=0}^{n-1} \mu(T^k A) = 0 \right\}.
\]
\begin{lemma*}\label{lem:Vplus1}
    The set $\cV(T)$ is closed under finite unions and passing to subsets and contains all weakly wandering sets for $T$.
\end{lemma*}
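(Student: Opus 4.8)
The plan is to verify the three assertions separately, each being essentially a formal manipulation of Ces\`aro averages of the measures $\mu(T^k A)$. First I would record the key feature: for $A \in \malg(X,[\mu])$ set $c_n(A) \coloneqq \frac1n\sum_{k=0}^{n-1}\mu(T^kA) \in [0,1]$, so that $A \in \cV(T)$ iff $c_n(A) \to 0$. Monotonicity in the set variable is immediate: if $A \subset B$ then $\mu(T^kA) \le \mu(T^kB)$ for every $k$ (since $T^k$ is a Borel automorphism and thus maps the inclusion $A \subset B$ to $T^kA \subset T^kB$), hence $c_n(A) \le c_n(B)$; if $c_n(B) \to 0$ then $c_n(A) \to 0$ a fortiori, so $\cV(T)$ is closed under passing to subsets.

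Next, closure under finite unions: by induction it suffices to treat $A \cup B$ with $A, B \in \cV(T)$. Since $T^k(A \cup B) = T^kA \cup T^kB$, subadditivity of $\mu$ gives $\mu(T^k(A\cup B)) \le \mu(T^kA) + \mu(T^kB)$, hence $c_n(A \cup B) \le c_n(A) + c_n(B) \to 0$. This also shows $A \cup B \in \cV(T)$.

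Finally, every weakly wandering set $W$ for $T$ lies in $\cV(T)$. By Definition \ref{DefWW} there are $0 = k_0 < k_1 < k_2 < \dots$ with $\mu(T^{k_i}W \cap T^{k_j}W) = 0$ for $i \ne j$. The idea is that along the subsequence $(k_i)$ the translates of $W$ are measurably disjoint, so there can only be ``few'' indices $k < n$ at which $\mu(T^kW)$ is non-negligible, and by non-singularity of $T$ these few contributions are individually small in a controlled way. Concretely, fix $\epsilon > 0$. Since $\sum_i \mu(T^{k_i}W) = \mu(\bigsqcup_i T^{k_i}W) \le 1 < \infty$, only finitely many $i$ satisfy $\mu(T^{k_i}W) \ge \epsilon$; let $i_0$ be the largest such index and put $K \coloneqq k_{i_0}$. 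Then for every $k \ge K$ with $k \in \{k_i\}$ we have $\mu(T^kW) < \epsilon$. To handle indices $k$ \emph{not} of the form $k_i$, I would use the non-singular Lemma in \S\ref{par:nonsingular on ma}: applied to $T^{-1}$ (also non-singular) there is $\delta > 0$ such that $\mu(E) < \delta \implies \mu(T^{-1}E) < \epsilon$, and iterating, for any fixed gap bound $G$ there is $\delta_G > 0$ with $\mu(E) < \delta_G \implies \mu(T^{-j}E) < \epsilon$ for all $1 \le j \le G$. The gaps $k_{i+1} - k_i$ of a weakly wandering sequence can be unbounded, so this does not immediately work uniformly; the remedy is to choose, inside the original weakly wandering sequence, a \emph{sub}sequence that is \emph{uniformly} weakly wandering is not available either, so instead I would argue as follows. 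Write each $k \in [0,n)$ as $k = k_{i(k)} + j(k)$ where $k_{i(k)}$ is the largest element of $\{k_i\}$ that is $\le k$. For the terms with $k_{i(k)} < K$ (there are at most $K$ distinct values of $k_{i(k)}$, namely $k_0,\dots,k_{i_0}$, but the number of $k$'s mapping to each can be large) we bound crudely by $1$; this block contributes at most $\frac{1}{n}\cdot(\text{number of such }k)$ to $c_n(W)$, and since $T$ is ergodic of type $\I_\infty,\II_\infty$ or $\III_\lambda$, hence aperiodic, one can show (using Lemma \ref{lem:many disjoint translates} or directly the structure of weakly wandering sequences as in \cite{HajianKakutani1964}) that this number is $o(n)$. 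For the block with $k_{i(k)} \ge K$, we have $\mu(T^{k}W) = \mu(T^{j(k)}(T^{k_{i(k)}}W))$ with $\mu(T^{k_{i(k)}}W) < \epsilon$; if the gaps were bounded by $G$ we would conclude $\mu(T^kW) < \epsilon'$ with $\epsilon' \to 0$ as $\epsilon \to 0$, uniformly. Thus $\limsup_n c_n(W) \le \epsilon'$, and letting $\epsilon \to 0$ gives $c_n(W) \to 0$.

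The main obstacle is the last point, specifically controlling the translates $T^kW$ for $k$ not in the weakly wandering subsequence $(k_i)$ when the gaps $k_{i+1}-k_i$ are unbounded: non-singularity only gives ``$\mu(E)$ small $\implies \mu(T^{-j}E)$ small'' for each fixed $j$, not uniformly in $j$. I expect this is circumvented either by replacing $W$ at the outset with a weakly wandering set with \emph{bounded gaps} (which exists by the Hajian--Kakutani / Jones--Krengel theory for ergodic transformations with no invariant probability measure — indeed one may take a weakly wandering set of positive measure whose return structure to a Rokhlin tower has controlled gaps, cf.\ Lemma \ref{lem:many disjoint translates}), after which $\cV(T)$ being closed under subsets finishes the job; or by the observation that the set of $k < n$ for which $\mu(T^kW) \ge \epsilon$ has cardinality $o(n)$ \emph{directly}, since $\sum_{k : \mu(T^kW)\ge\epsilon, k<n} \mu(T^kW)$ need not be bounded, but one can instead invoke that $W$ weakly wandering implies $\frac1n\sum_{k<n}\mathbf{1}_{T^kW}$ tends to $0$ in $L^1$, which is exactly the assertion $c_n(W)\to 0$ and is the content of the standard proof that weakly wandering sets have ``density zero orbits'' (\cite[proof of Prop.~2.1]{HajianKakutani1964} type argument). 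I would present whichever of these is cleanest once the precise form needed downstream (for Krengel's theorem) is fixed.
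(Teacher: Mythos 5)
Your handling of closure under subsets and under finite unions is correct and matches the paper: both follow from monotonicity and (sub)additivity of $\mu$ applied termwise to the Ces\`aro averages.

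For the weakly wandering part, however, there is a genuine gap. You try to show that each individual term $\mu(T^kW)$ is eventually small (or small for all but $o(n)$ values of $k<n$), and you correctly identify the obstruction: non-singularity only gives ``$\mu(E)$ small $\Rightarrow \mu(T^{-j}E)$ small'' for each \emph{fixed} $j$, not uniformly in $j$, so unbounded gaps in the weakly wandering sequence defeat this line of attack. The two repairs you sketch do not close the gap. Passing to a weakly wandering set with bounded gaps would of course solve the problem via closure under subsets, but the existence of such a set (with the correct saturation properties needed in Corollary~\ref{cor:Vplusdense}) is not obviously available and would itself need a proof. Your second remedy --- that the set of $k<n$ with $\mu(T^kW)\ge\epsilon$ has cardinality $o(n)$ --- is precisely equivalent to $c_n(W)\to 0$ (cf.\ Lemma~\ref{LemmaDensity1}), so invoking it here is circular, as you half-acknowledge.

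The point you are missing is that one should not try to control each $\mu(T^kW)$ at all. The paper's proof is a direct double-counting/averaging argument: fix $r$ and indices $0=k_0<k_1<\cdots<k_{r-1}$ with $T^{k_0}W,\dots,T^{k_{r-1}}W$ pairwise disjoint (up to nullsets). Since $T^j$ is non-singular, the shifted collection $T^{j+k_0}W,\dots,T^{j+k_{r-1}}W$ is still pairwise disjoint up to nullsets for every $j\in\Z$, so $\sum_{i=0}^{r-1}\mu(T^{j+k_i}W)\le 1$. Summing this over $j=0,\dots,n-1-k_{r-1}$, reindexing, and absorbing the $r\,k_{r-1}$ boundary terms (each $\le 1$) yields
\[
r\sum_{k=0}^{n-1}\mu(T^kW)\ \le\ (n-k_{r-1}) + r\,k_{r-1},
\]
hence $\limsup_n \frac1n\sum_{k=0}^{n-1}\mu(T^kW)\le \frac1r$, and letting $r\to\infty$ gives $W\in\cV(T)$. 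No individual term needs to be small, and the unbounded-gap issue never arises. I suggest replacing your analysis of individual translates with this averaging argument.
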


\begin{proof}{
That $\cV(T)$ is closed under finite unions and passing to subsets is immediate from additivity of $\mu$. Now let $A$ be weakly wandering for $T$ and abbreviate $A_j \coloneqq T^j A$ for all $j \in \bZ$. Fix $r>0$ and choose $k_1,\dots,k_{r-1}$ such that $A_0,A_{k_1},\dots,A_{k_{r-1}}$ are disjoint up to a nullset. For some large $n$, consider $r$ disjoint copies of the first $n$ translates of the set $A$ and arrange these in an $r \times n$ diagram, as depicted in Figure \ref{fig:weak_wand} for the case where $r=4$, and $k_1=1,k_2=3,k_3=5$. 
\begin{figure}[h!]
            \centering
            \input{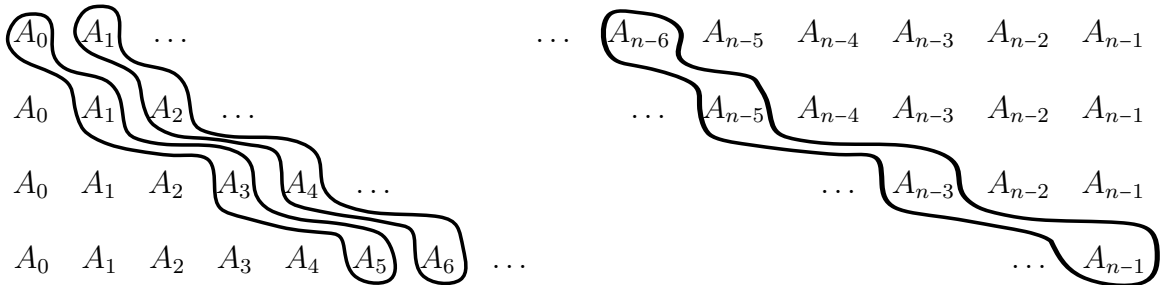} 
    \caption{Weakly wandering at steps $0,1,3$ and $5$.}
    \label{fig:weak_wand}
\end{figure}

Since the encircled collections denote disjoint sets we bound the total measure of the sets appearing in each of the $n-k_{r-1}$ encircled areas by $1$. Each of the $r k_{r-1}$ remaining sets also has measure at most $1$, yielding the following estimate: 
\begin{align*}
r \cdot \sum_{k=0}^{n-1} \mu(A_k)  \leq (n - k_{r-1}) + rk_{r-1}.
\end{align*}
Dividing by  $nr$ and taking the $\limsup$ over $n$ yields
\[
\limsup_{n \to \infty} \frac{1}{n} \sum_{k=0}^{n-1} \mu(T^k A) \leq \frac{1}{r} \quad \text{for every }r \in \bN.
\]
Finally, letting $r \to \infty$ proves that $A \in \cV(T)$.}
\end{proof}

\begin{corollary*}\label{cor:Vplusdense}
For all $A \in \cB$ and $\varepsilon > 0$ there exists $V \in \cV(T)$ with $V \subset A$ and $\mu(A \setminus V) < \varepsilon$. In particular, $\cV(T)$ is dense in $\malg(X, [\mu])$.
\end{corollary*}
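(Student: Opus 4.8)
The plan is to separate out the trivial reduction and then locate the one real ingredient. First, the ``in particular'' clause is immediate from the first statement: if $V\subset A$ with $\mu(A\setminus V)<\varepsilon$, then $\mu(A\triangle V)=\mu(A\setminus V)<\varepsilon$, so every $A\in\malg(X,[\mu])$ is $\varepsilon$-approximated by an element of $\cV(T)$. For the first statement I would use that $\cV(T)$ is closed under passing to subsets (Lemma \ref{lem:Vplus1}): it then suffices to produce, for each $\varepsilon>0$, a \emph{single} set $V_0\in\cV(T)$ with $\mu(X\setminus V_0)<\varepsilon$, and then take $V:=V_0\cap A\subset A$, which lies in $\cV(T)$ and satisfies $\mu(A\setminus V)\le\mu(X\setminus V_0)<\varepsilon$. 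Thus the whole corollary reduces to the assertion that $\cV(T)$ contains sets of measure arbitrarily close to $1$.

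To produce such sets I would invoke an \emph{exhaustive} weakly wandering set. By hypothesis ($T$ is as in Theorem \ref{thm:jknew}), $T$ is ergodic and $[\mu]$ contains no $T$-invariant probability measure. It is classical in the theory of weakly wandering sets --- it is the exhaustive refinement of the Hajian--Kakutani criterion recalled in Proposition \ref{WWTypes1}, due to Hajian--Ito--Kakutani and worked out in detail in the monograph of Eigen--Hajian--Ito--Prasad, and it can also be deduced from Proposition \ref{WWTypes1} by a greedy exhaustion argument --- that such $T$ admits a measurable set $W$ together with integers $0=k_0<k_1<k_2<\dots$ with $X=\bigsqcup_{i\ge 0}T^{k_i}W$ modulo null sets.

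Granted this, I would finish as follows. First note that $\cV(T)$ is invariant under $T^{\pm1}$, hence under every $T^{j}$: for $B\in\cV(T)$ and $j\in\bZ$ one has $\frac1n\sum_{k=0}^{n-1}\mu\big(T^{k}(T^{j}B)\big)=\frac1n\sum_{k=j}^{\,n-1+j}\mu(T^{k}B)$, which differs from $\frac1n\sum_{k=0}^{n-1}\mu(T^{k}B)$ by at most $2|j|/n\to 0$, so $T^{j}B\in\cV(T)$. Since $W$ is weakly wandering it lies in $\cV(T)$ by Lemma \ref{lem:Vplus1}, hence so does each translate $T^{k_i}W$; therefore $V_0^{(N)}:=\bigsqcup_{i=0}^{N}T^{k_i}W\in\cV(T)$ by closure of $\cV(T)$ under finite unions (Lemma \ref{lem:Vplus1}), and $\mu\big(V_0^{(N)}\big)=\sum_{i=0}^{N}\mu(T^{k_i}W)\uparrow\sum_{i\ge 0}\mu(T^{k_i}W)=\mu(X)=1$. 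Choosing $N$ with $\mu\big(V_0^{(N)}\big)>1-\varepsilon$ gives the required $V_0$, and the corollary follows.

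The step I expect to be the main obstacle is precisely the appeal to the exhaustive weakly wandering set. Proposition \ref{WWTypes1} as stated only yields a weakly wandering set of \emph{positive} measure, and --- as the argument above makes clear --- finite unions and translates of such a set only reach total measure $\sum_i\mu(T^{k_i}W)$, which need not be close to $1$. So either one cites the classical exhaustive version, or one inserts the (somewhat more delicate) maximality/greedy argument upgrading a positive-measure weakly wandering set to an exhaustive one, using that finite unions and translates of weakly wandering sets are again weakly wandering and that ergodicity lets one keep absorbing the remainder. Everything else in the proof is routine bookkeeping.
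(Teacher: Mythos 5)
Your proof is correct, but it reaches for a heavier tool than is needed, and in doing so you actually create the gap you worry about. The paper never uses an exhaustive weakly wandering set. It starts, as you do, from a positive-measure weakly wandering set $W$ (Proposition \ref{WWTypes1}), and then simply observes that by ergodicity the saturation $\bigcup_{n\in\bZ}T^nW$ has full measure. Hence there is a \emph{finite} set $I\subset\bZ$ of (arbitrary, not necessarily pairwise-disjoint) integers with $\mu\bigl(A\setminus\bigcup_{i\in I}T^iW\bigr)<\varepsilon$, and then $V:=A\cap\bigcup_{i\in I}T^iW$ works: each $T^iW$ is weakly wandering, so the finite union lies in $\cV(T)$ by Lemma \ref{lem:Vplus1}, and $V$ is a subset of it, so $V\in\cV(T)$.

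The point you missed is that the finite subcollection of translates need not be the disjoint ones $T^{k_i}W$ coming from the weakly wandering sequence; overlapping translates are fine, and their union still has measure tending to $1$ as the index set grows, precisely because the full orbit of $W$ already covers $X$. This makes your worry --- that ``finite unions and translates of such a set only reach total measure $\sum_i\mu(T^{k_i}W)$'' --- moot, and removes any need to cite or re-derive the exhaustive weakly wandering theorem of Hajian--Ito--Kakutani. Your version does go through once that deeper result is granted, and your translation-invariance observation for $\cV(T)$ is correct (though the paper avoids needing it by applying Lemma \ref{lem:Vplus1}'s ``closed under subsets'' clause to the \emph{subsets} $A\cap T^iW$ of the weakly wandering translates $T^iW$ themselves). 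But the clean path is: positive-measure weakly wandering set $+$ ergodicity of $T$ $\Rightarrow$ finitely many translates suffice.
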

\begin{proof}  By Proposition \ref{WWTypes1}, $T$ admits a weakly wandering set $W$ of positive measure, and since $T$ acts ergodically, we have $\smash{\mu\left(\bigcup_{n \in \bZ} T^n W\right)=1}$. Hence there exists a finite set $I \subset \bZ$ with $\mu(A \setminus \bigcup_{i \in I} T^i W ) < \varepsilon$. Now $W$ and its translates are weakly wandering for $T$, hence by Lemma \ref{lem:Vplus1} the subset $V \coloneqq  A \cap \bigcup_{i \in I} T^i W \subset A$ is in $\cV(T)$.
\end{proof}
\end{no}

\begin{no}
Recall that a subset $M \subset \bN$ has \emph{natural density} $\kappa \in \bR$ if $\lim_{n \to \infty} \frac{|M \cap [0,n-1]|}{n} = \kappa$.
\begin{lemma*}\label{LemmaDensity1} If $A \in  \cV(T)$, then the set
$M(T, A, \delta) \coloneqq  \{n \in \bN_0 \mid \mu(T^n A) < \delta\}$ has natural density $1$ for every $\delta>0$.
\end{lemma*}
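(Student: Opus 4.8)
The plan is to deduce this immediately from the definition of $\cV(T)$ by a Markov-type (Chebyshev) inequality applied to Cesàro averages. Write $a_k \coloneqq \mu(T^k A) \in [0,1]$ for $k \in \bN_0$; by definition, $A \in \cV(T)$ means precisely that $\frac{1}{n}\sum_{k=0}^{n-1} a_k \to 0$ as $n \to \infty$. Fix $\delta > 0$. Since $M(T,A,\delta)$ and its complement $E \coloneqq \{k \in \bN_0 \mid a_k \geq \delta\}$ partition $\bN_0$, it suffices to show that $E$ has natural density $0$.

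For this, I would use the trivial bound, valid for every $n \in \bN$,
\[
\sum_{k=0}^{n-1} a_k \;\geq\; \sum_{k \in E \cap [0,n-1]} a_k \;\geq\; \delta \cdot \bigl| E \cap [0,n-1] \bigr|,
\]
where the first inequality uses $a_k \geq 0$ for all $k$ and the second uses $a_k \geq \delta$ for $k \in E$. Dividing by $\delta n$ yields
\[
\frac{\bigl| E \cap [0,n-1] \bigr|}{n} \;\leq\; \frac{1}{\delta}\cdot \frac{1}{n}\sum_{k=0}^{n-1} a_k,
\]
and the right-hand side tends to $0$ as $n \to \infty$ by the assumption $A \in \cV(T)$. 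Hence $\lim_{n \to \infty} \bigl| E \cap [0,n-1] \bigr|/n = 0$, i.e.\ $E$ has natural density $0$, and therefore $M(T,A,\delta)$ has natural density $1$.

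No genuine obstacle arises here: the statement is a routine consequence of the fact that a non-negative bounded sequence whose Cesàro means vanish must be ``small'' in the density sense, and the specific structure (the identity $a_k = \mu(T^k A)$ and the hypothesis $A \in \cV(T)$) enters only through the two displayed inequalities. The only points to be mildly careful about are that the $a_k$ are non-negative, so that discarding the terms with $k \notin E$ only decreases the sum, and that $\delta > 0$ is fixed \emph{before} taking the limit in $n$.
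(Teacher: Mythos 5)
Your proof is correct and is essentially identical to the paper's own argument: both bound $|E \cap [0,n-1]|\cdot\delta$ by the partial Cesàro sum, divide by $\delta n$, and let $n \to \infty$.
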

\begin{proof} If $M^c$ denotes the complement of $ M(T, A, \delta)$, then for every $n \in \bN$ we have
\begin{align*}
 \sum_{k = 0}^{n-1} \mu(T^{k} A) \geq \sum_{k \in M^c \cap [0, n-1]} \mu(T^k A) \geq \sum_{k \in M^c \cap [0, n-1]} \delta \geq |M^c \cap [0,n-1]| \cdot \delta.
\end{align*}
Divide by $n\delta$ and let $n \to \infty$; as $A \in \cV(T)$ we deduce that $M^c$ has natural density $0$.
\end{proof}
\end{no}

\begin{no}
\begin{proof}[Proof of Theorem \ref{thm:jknew}] Fix $U \in \malg(X, \mu)$ and $\delta > 0$; by Baire category we need to show that $\Omega_{U,\delta} \subset \malg(X, \mu)$ is dense. Fix $B \in \cB$ and $\varepsilon \in (0, \delta)$.

\item Applying Corollary \ref{cor:Vplusdense} to both $T$ and $T^{-1}$ we find $U' \subset U$ and $B' \subset B$ such that
\[
U' \in \cV(T), \quad \mu(U \setminus U') < {\varepsilon}/{2}, \quad B' \in \cV(T^{-1}) \qand \mu(B \setminus B') < {\varepsilon}/{2}.
\] 
By Lemma \ref{LemmaDensity1} the sets  $M(T, B', \varepsilon/2)$ and $M(T^{-1}, U', \varepsilon/2)$ have natural density $1$, hence intersect in a set $M$ of natural density $1$. For all $n \in M$ we have
$\mu(T^{-n}U') < {\varepsilon}/{2}$ and $\mu(T^n B') < {\varepsilon}/{2}$, hence if we set $A \coloneqq  B' \cup T^{-n}U'$, then
\[
\mu(T^n A \triangle U) \leq \mu(U \setminus U') + \mu(T^nB') < \varepsilon < \delta \implies A \in \Omega_{U,\delta},
\]
and $\mu(A \triangle B) \leq \mu(B \setminus B') + \mu(T^{-n}U') < \varepsilon$. This shows that $ \Omega_{U,\delta}$ is dense. 
\end{proof}
\end{no}
\subsection{Density in the measurable full group}\label{SecDensity}
\begin{no}
Although we formulate the following proposition with the assumption that $[\cR]$ is hyperfinite, we thank Fran\c{c}ois Le Ma\^itre for pointing out that this is not necessary and we actually show that the measurable full group of any non-p.m.p. ergodic countable Borel equivalence relation is topologically 2-generated.

\begin{proposition*}\label{prop:dense}
    Assume $\cR$ is of type $\II_\infty$ or $\III_\lambda$.
    Then the subset $\mathrm{Hom}^{\mathrm{den}}_T(\Gamma, G) \subset \mathrm{Hom}_T(\Gamma, G)$ of constrained representations with dense image is a dense $G_\delta$ set.
\end{proposition*}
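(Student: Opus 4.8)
\textbf{The $G_\delta$ part.} Fix a countable dense set $\{g_n \mid n \in \bN\} \subseteq G$. A representation $\rho \in \mathrm{Hom}_T(\Gamma,G)$ has dense image if and only if for all $n$ and all $k\geq 1$ there is $w \in \Gamma$ with $d(\rho(w),g_n) < 1/k$. Since each evaluation map $\mathrm{ev}_w \colon \mathrm{Hom}_T(\Gamma,G) \to G$ is continuous (\S\ref{Ev}), the set $\bigcup_{w \in \Gamma}\mathrm{ev}_w^{-1}(\{h \mid d(h,g_n)<1/k\})$ is open, and $\mathrm{Hom}^{\mathrm{den}}_T(\Gamma,G)$ is the intersection of these over all $n,k$; hence it is $G_\delta$, and it will be dense (hence a dense $G_\delta$) as soon as we can, near any given representation, produce one with dense image.

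\textbf{Reduction.} As $\Gamma$ is non-abelian, its basis contains $a$ together with some $b_1$. Thus, given $\rho$ and $\epsilon>0$, it suffices to replace $\rho(b_1)$ by a $g$ with $d(g,\rho(b_1))<2\epsilon$ and $\overline{\langle T,g\rangle}=G$, leaving all other $\rho(b_i)$ unchanged: the new representation is $\epsilon$-close to $\rho$ and has dense image. So the whole problem reduces to showing that $\{g \in G \mid \overline{\langle T,g\rangle}=G\}$ is dense in $G$ (a uniform form of topological $2$-generation of $[\cR]$ by $T$ and one further element), and by Proposition~\ref{FTD} we may assume the element $h$ to be approximated has finite total displacement.

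\textbf{Two ingredients.} For $B\in\malg(X,[\mu])$ with $\mu(B\cap TB)=0$ let $\tau_B\in G$ be the transposition sending $x\mapsto Tx$ on $B$, $x\mapsto T^{-1}x$ on $TB$, and fixing everything else. First, the $\tau_B$ topologically generate $G$: an element of finite total displacement is, on each $T$-orbit (identified with $\bZ$ via $n\mapsto T^nx$), a bounded-displacement permutation, hence a finite product of adjacent transpositions, and thus a finite product of $\tau_B$'s; by Proposition~\ref{FTD} such elements are dense. Moreover the absolute-continuity lemma of \S\ref{par:nonsingular on ma} shows that $B\mapsto\tau_B$ is continuous on the closed set $\{B\mid \mu(B\cap TB)=0\}$, and one checks directly that $T\tau_BT^{-1}=\tau_{TB}$. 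Second, since $\cR$ is of type $\II_\infty$ or $\III_\lambda$, Krengel's theorem (Theorem~\ref{thm:jknew}) says the transitive points of $\malg(X,[\mu])$ form a dense $G_\delta$. If $A$ is transitive and $A_*\coloneqq A\setminus TA$, then $\mu(A_*\cap TA_*)=0$, while $\mu(A_*)>0$ because a nested countable chain $\{T^nA\mid n\in\bZ\}$ could never be dense in $\malg(X,[\mu])$, so $A\not\subseteq TA$; and since $B\mapsto B\setminus TB$ is continuous, restricts to the identity on $\{B\mid \mu(B\cap TB)=0\}$, and sends $T^nA$ to $T^nA_*$, the orbit $\{T^nA_*\mid n\in\bZ\}$ is dense in $\{B\mid \mu(B\cap TB)=0\}$. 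Therefore $\overline{\langle T,\tau_{A_*}\rangle}$ contains every $\tau_{T^nA_*}=T^n\tau_{A_*}T^{-n}$, hence, by continuity, every $\tau_B$, hence all of $G$.

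\textbf{The main obstacle.} It remains to produce, for a given $h$ of finite total displacement $m$ and $\epsilon>0$, an element $g$ with $d(g,h)<\epsilon$ such that $\overline{\langle T,g\rangle}$ still contains a transitive transposition (whence $\overline{\langle T,g\rangle}=G$ by the previous paragraph, so the perturbed representation lies in $\mathrm{Hom}^{\mathrm{den}}_T(\Gamma,G)$ and density follows). This is the delicate step, and I expect it to be the only genuinely technical point. The plan is to use Krengel's theorem together with the weakly-wandering machinery behind it (the classes $\cV(T)$ and the natural-density estimates of \S\ref{ThmKrengel}, and Lemma~\ref{lem:many disjoint translates}) to choose a transitive set $A$ of tiny measure, and then to modify $h$ only on a small $h$-invariant set enclosing $A\cup TA$ — for instance its $h$-saturation, which by the displacement bound is contained in $\bigcup_{|j|\leq m+1}T^jA$ and hence (by absolute continuity) has measure $<\epsilon$ — redefining $h$ there, with the help of the extension lemma (Lemma~\ref{lem:extension_infty}) to keep the result a bona fide element of $[\cR]$, in such a way that a word in $T$ and $g$ recovers $\tau_{A_*}$ up to arbitrarily small error, while the set $A$ itself (and therefore its transitivity) is left untouched. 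The real work is exactly this bookkeeping: ensuring that the modified map preserves $\cR$ and is a bijection, and that the subgroup it generates with $T$ is still everything. It is also precisely here that the hypothesis on the Krieger type is used, which is why the analogous statement in type $\II_1$, where Krengel's theorem fails, remains open.
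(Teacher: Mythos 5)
Your approach is genuinely different from the paper's and has an appealing structure, but it has a real gap exactly where you flag one, and I don't think the sketch you give for the ``main obstacle'' would close it without an argument that is essentially the paper's construction in disguise.

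The $G_\delta$ part and the reduction are fine. Your ``two ingredients'' paragraph is a nice, self-contained argument that $G$ is topologically generated by $T$ together with a single transposition $\tau_{A_*}$ (the paper notes the topological $2$-generation as a corollary, but via its own construction, so this is a genuinely different route to that fact). Two small points there: (a) the claim that finite-total-displacement elements are finite products of $\tau_B$'s is true but needs a measurable bubble-sort argument -- the decomposition must be done uniformly across orbits, with the number of factors controlled by the displacement bound, and each factor of the form $\tau_B$ for a single global $B$; and (b) the justification ``a nested countable chain could never be dense'' is not quite right as stated; the clean reason $\mu(A\setminus TA)>0$ is that $T$ is conservative (ergodic and non-atomic), so $A\subset TA$ forces $T^{-1}A=A$ up to null sets, hence $\mu(A)\in\{0,1\}$, contradicting transitivity.

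The genuine gap is the ``main obstacle'': you must show that \emph{near any} $h$ there is $g$ with $\overline{\langle T,g\rangle}=G$, but the two ingredients only produce \emph{one} such $g$ (namely $\tau_{A_*}$, which is an involution, certainly not close to a generic $h$). Your sketch -- modify $h$ on an $h$-saturation of $A\cup TA$ so that ``a word in $T$ and $g$ recovers $\tau_{A_*}$ up to arbitrarily small error'' -- is not a plan so much as a restatement of what must be achieved. The phrase ``up to arbitrarily small error'' is the warning sign: the word would have to depend on the desired precision, and the modification $g$ is fixed once chosen, so you actually need $\tau_{A_*}\in\overline{\langle T,g\rangle}$ exactly, and nothing in the sketch indicates how to extract a transposition supported on $A_*$ from $g$ when $g$ also carries an arbitrary $h$ off the small set. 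By contrast, the paper sidesteps the $2$-generation detour entirely: it fixes the target $g_i$ and $\epsilon$, takes a transitive $A$ with $\epsilon<\mu(X\setminus A)<2\epsilon$, uses transitivity to choose $n_1$ so $\mu(T^{n_1}A)<\epsilon$ and $n_2$ so $T^{n_2}A\approx g_i(A)$, then redefines $\rho(b_1)$ only on $T^{n_1}A$ (plus a tiny compensating set) to be $T^{n_3}g_iT^{-n_1}$ there, and glues via the extension lemma (Lemma~\ref{lem:extension_infty}); the single word $w=a^{-n_3}b_1a^{n_1}$ then satisfies $\eta(w)\approx g_i$ at the outset, so density of $\Omega_{n,i}$ drops out directly. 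You would do well to abandon the transposition detour for the density step and instead aim for this kind of direct ``insert the target into a small translate of $A$'' construction.
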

\begin{proof} Fix a countable dense subset $(g_i)_{i \in I}$ in $G$ and observe that
\[
\mathrm{Hom}^{\mathrm{den}}_T(\Gamma, G) = \bigcap_n \bigcap_i \Omega_{n,i}, \; \text{where} \; \Omega_{n,i} \coloneqq  \big\{\rho \in \Hom_T(\Gamma,G) \, \big| \, \exists w \in \Gamma \colon d(\rho(w),g_i) < \frac{1}{n} \big\}.
\]
Now fix $n \in \bN$ and $i \in I$. Given $w \in \Gamma$ we define \[f_{w,i} \colon \Hom_T(\Gamma,G) \to \mathbb{R}_{\geq 0}, \quad  f_w(\rho) \coloneqq  d(\rho(w),g_i).\] By continuity of $d$ and the evaluation maps these maps are continuous, and thus
\[
\Omega_{n,i} = \bigcup_{w \in \Gamma}  f_{w,i}^{-1}\left(\big[0,\frac{1}{n}\big)\right)
\]
is open; it remains to prove that it is also dense.

For this we fix $\rho \in \Hom_T(\Gamma,G)$ and $\eps \in (0, \min\{1/10, 1/n\})$. By Theorem \ref{thm:jknew} we may choose $A \in \malg(X, \mu)^{\mathrm{trans}}$; since $\mu$ is non-atomic we can assume without loss of generality that $\epsilon < \mu(X \setminus A)<2\epsilon$. Since $A$ is transitive, we can now choose $n_1,n_2,n_3 \in \Z$ satisfying the following conditions:
\begin{enumerate}[(i)]
    \item $\mu(T^{n_1}(A)) < \epsilon$.
    \item \label{itm:close to g} $\mu(A \triangle g_i^{-1}(T^{n_2}(A))) < \epsilon$.
    \item \label{itm:small image} $\mu(T^{n_2+n_3}(A))<1-\mu(\rho(b_1)(A))$.
    \item $\mu(\rho(b_1)^{-1}(T^{n_2+n_3}(A)))<\epsilon$.
\end{enumerate}
Now define a partial Borel automorphism $\tau\colon \dom(\tau) \to \ran(\tau)$ of $X$ by
\begin{equation}\label{equ:tau}
\tau(x) = \begin{cases}
                \rho(b_1)(x), & \text{if } x \in D_1 \coloneqq  A \setminus (T^{n_1}(A) \cup \rho(b_1)^{-1}(T^{n_2+n_3}(A)))\\
                T^{n_3}(g_i(T^{-n_1}(x))), & \text{if }x \in D_2 \coloneqq  T^{n_1}(A \cap g_i^{-1}(T^{n_2}(A))).
            \end{cases}
\end{equation}

This is well-defined, since $D_1 \cap D_2 = \emptyset$, and it is injective since $T^{n_3}(g_i(T^{-n_1}(T^{n_1}(D_2))))\subset T^{n_3+n_2}(A)$ and the latter is disjoint from $\rho(b_1)(D_1)$.

\begin{figure}
    \centering
    \includegraphics[scale=1]{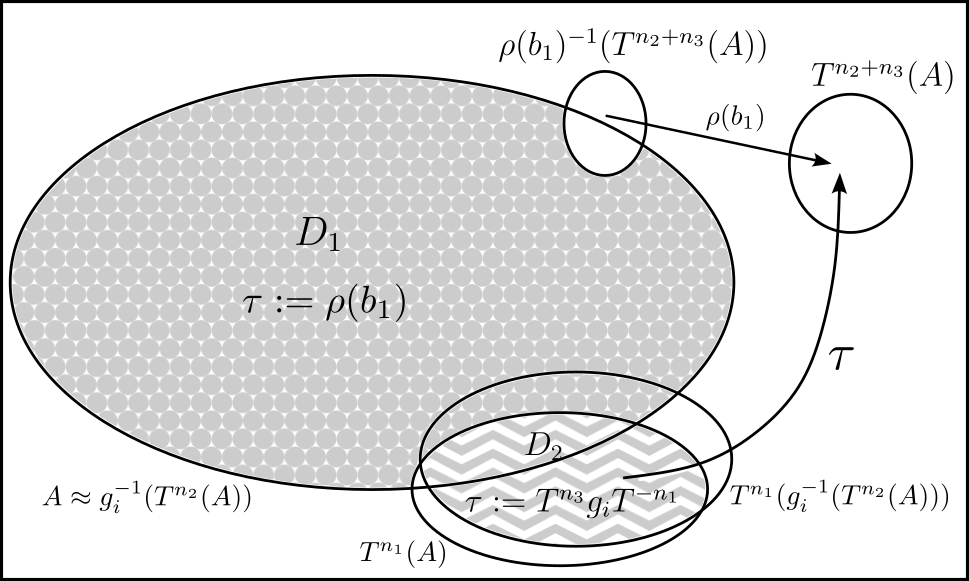}
    \caption{Depiction of $\tau$ as in (\ref{equ:tau}).}
    \label{fig:tau}
\end{figure}

Next we observe that by (i) and (iv) and the choice of $A$ we have
\begin{align*}
0 & < (1-2\epsilon)-2\epsilon < \mu(A)-\mu(T^{n_1}(A)) - \mu(\rho(b_1)^{-1}(T^{n_2+n_3}(A))) \leq \mu(D_1)\\
 & \leq \mu(\dom(\tau)) \leq  \mu(A) +\mu(T^{n_1}(A)) < (1 -\epsilon) + \epsilon = 1.
\end{align*}
Since $\rho(b_1)$ is measure-class preserving and $\mu(D_1) > 0$ we deduce with (iii) that
\begin{align*}
0 & < \mu(\rho(b)(D_1)) \leq \mu(\ran(\tau)) \leq \mu( \rho(b_1)(A) \cup T^{n_3}(g_i(A)\cap T^{n_2}(A)))\\ & \leq  \mu(\rho(b_1)(A)) + \mu(T^{n_2+n_3}(A))<1.
\end{align*}
We can thus apply Lemma \ref{lem:extension_infty} to find $\tau' \in G$ extending $\tau$. 

\item We now define $\eta \in \Hom_T(\Gamma,G)$ by setting $\eta(b_1) = \tau'$ and $\eta(b_i)=\rho(b_i)$ for all $i \geq 2$.
If we set  $w \coloneqq  a^{-n_3} b_1 a^{n_1}$, then for all  $x \in A \cap g_i^{-1}(T^{n_2}(A))$ we have $\eta(w)(x)=g_i(x)$; it thus follows from (ii) that $d(g_i,\eta(w))<\epsilon \leq 1/n$, and hence $\eta \in \Omega_{i,n}$ since 
$d(\eta(b_1),\rho(b_1))\leq 4\epsilon$. This proves that $\Omega_{i,n}$ is dense in $\Hom_T(\Gamma,G)$.
\end{proof}
\begin{remark*}\label{DenseGeneral} 
    Krengel's theorem obviously cannot hold in type $\II_1$.
    It is an open question whether constrained representations into full groups of Krieger type $\II_1$ are dense for all possible choices of generators $T$.
    In his PhD thesis, Le Ma\^{i}tre shows that it is true if $T$ is a rank one transformation (\cite[Theorem 5.4]{LeMaitrePhD}), but for general $T$ it is unknown, see \cite[Question 1.9]{LM:full_groups} and the remark following it.
\end{remark*}
\end{no}

\subsection{Consequences of density}

\begin{no}\label{par:high trans from dense}

Recall that for subgroups of $\Sym(\Z)$ high transitivity is equivalent to density.
One implication also holds in the non-atomic case.

\begin{proposition*}
    Let $\Lambda \leq G$ be dense. Then $\Lambda$ acts highly transitively on almost every orbit.
\end{proposition*}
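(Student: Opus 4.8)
Let $\Lambda \leq G = [\cR]$ be dense, where $\cR$ is a hyperfinite ergodic equivalence relation on a non-atomic standard probability space $(X,\mu)$ with generator $T$; write $\alpha$ for the action cocycle with respect to $T$, so that for $g \in G$ and a.e.\ $x$ the permutation $\alpha_x(g) \in \Sym(\bZ)$ records how $g$ moves along the $T$-orbit $\{T^n x\}$. High transitivity of $\Lambda$ on the orbit $[x]_\cR$ means: for every $k$ and every pair of ordered $k$-tuples of distinct points $T^{i_1}x, \dots, T^{i_k}x$ and $T^{j_1}x, \dots, T^{j_k}x$ in $[x]_\cR$ there is $g \in \Lambda$ with $g(T^{i_\ell}x) = T^{j_\ell}x$ for all $\ell$. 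Equivalently, for every finite set $I \subset \bZ$ and every injection $\tau_0 \colon I \hookrightarrow \bZ$, there is $g \in \Lambda$ with $\alpha_x(g)|_I = \tau_0$. So I want to show that for a.e.\ $x$ this holds.

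\emph{First step: reduce to a countable condition and a single positive-measure event.} Since there are only countably many pairs $(I, \tau_0)$ with $I \subset_f \bZ$ and $\tau_0\colon I \hookrightarrow \bZ$, and countably many $k \in \bN$, it suffices to show that for each fixed $(I,\tau_0)$ the set of $x \in X$ for which there exists $g \in \Lambda$ realizing $\alpha_x(g)|_I = \tau_0$ has full measure. Fix such $(I,\tau_0)$; enlarging $I$ if necessary, we may assume $I = [-N,N]$ and that the image $\tau_0(I)$ is also contained in $[-N,N]$ — more precisely, write $M := \max(|i| : i \in I \cup \tau_0(I))$ and work with the interval $[-M,M]$. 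Let $E_{I,\tau_0}$ be the set of good points $x$. It is $\cR$-invariant (if $x' = T^n x$ lies in the same class, a permutation realizing $\tau_0$ relative to $x$ translates to one relative to $x'$ after relabelling the interval), hence by ergodicity of $\cR$ it has measure $0$ or $1$. So it suffices to prove $\mu(E_{I,\tau_0}) > 0$.

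\emph{Second step: build one group element in $G$ that works on a positive measure set, then approximate by $\Lambda$.} Since $T$ is ergodic and $\mu$ is non-atomic, $T$ is aperiodic; by the Rokhlin-type Lemma \ref{lem:many disjoint translates} there is a subset $A \subset X$ of positive measure with $A, TA, \dots, T^{2M}A$ pairwise disjoint and $\mu(A \cup \dots \cup T^{2M}A)$ as small as we like. Put $B := \bigsqcup_{i=-M}^{M} T^{i}A$ (re-indexing the Rokhlin tower so its levels are indexed by $[-M,M]$). Using the embedding $\iota_{A,[-M,M]}\colon \Sym([-M,M]) \to G$ from \S\ref{SecFurther}, pick $\pi \in \Sym([-M,M])$ with $\pi|_I = \tau_0$ (possible since $\tau_0(I) \subset [-M,M]$) and set $g_0 := \iota_{A,[-M,M]}(\pi) \in G$; then $\alpha_x(g_0)|_I = \tau_0$ for every $x \in A$. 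Now use density of $\Lambda$: choose $g \in \Lambda$ with $d(g, g_0) = \mu(\{x : g(x) \neq g_0(x)\})$ smaller than $\mu(A)$. Then $\{x \in A : g(x) = g_0(x)\}$ has positive measure; but I actually need more — I need $\alpha_x(g)|_I = \tau_0$, i.e.\ $g$ agrees with $g_0$ not just at $x$ but at all the points $T^i x$, $i \in I$. To handle this, choose $g$ with $d(g,g_0)$ smaller than $\mu(A)/(2M+2)$; then by the disjointness of the Rokhlin levels, the set of $x \in A$ such that $g(T^i x) = g_0(T^i x)$ for \emph{all} $i \in [-M,M]$ — equivalently $x$ avoids $\bigcup_{i \in [-M,M]} T^{-i}\{g \neq g_0\}$, a set of measure $< \mu(A)$ — has positive measure. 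For such $x$ we have $\alpha_x(g)|_{[-M,M]} = \alpha_x(g_0)|_{[-M,M]} = \pi$, hence $\alpha_x(g)|_I = \tau_0$ and $g \in \Lambda$. This produces a positive measure set of good points, so $\mu(E_{I,\tau_0}) > 0$, hence $= 1$.

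\emph{Conclusion and expected obstacle.} Intersecting over the countably many $(I,\tau_0)$ gives a full measure set of $x$ on which $\Lambda$ acts highly transitively on $[x]_\cR$. The only mildly delicate point — and the place I expect to spend the most care — is the quantitative matching in the second step: plain density of $\Lambda$ only controls the symmetric difference $\{g \neq g_0\}$ in measure, which a priori guarantees agreement of $g$ and $g_0$ at a single point of a.e.\ orbit, not simultaneous agreement at the whole finite window $\{T^i x : i \in I\}$. The fix is exactly as above — push the density approximation below $\mu(A)/(2M+2)$ and exploit that the relevant ``bad'' event for $x$ is a union of $|I| \le 2M+1$ translates, each of small measure, of the symmetric difference, together with the disjointness of the Rokhlin tower levels so that these translates genuinely stay inside a controlled region. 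No new ideas beyond Lemma \ref{lem:many disjoint translates}, the embedding $\iota_{A,I}$, ergodicity, and density are needed.
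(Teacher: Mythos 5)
Your proof takes a genuinely different route from the paper's (Rokhlin tower plus direct approximation, versus the paper's trick of conjugating a single good $\gamma$ by elements $T_k \in \Lambda$ approximating $T^k$), but it has a genuine gap in the reduction step. You claim that $E_{I,\tau_0}$ is $\cR$-invariant, and your own parenthetical justification gives the game away: if $x' = T^m x$ and $g\in\Lambda$ satisfies $\alpha_x(g)|_I=\tau_0$, then the relation $g(T^{i+m}x)=T^{\tau_0(i)+m}x$ does \emph{not} follow; what you get instead is that $g$ realizes the \emph{shifted} injection $(I+m,\, j\mapsto\tau_0(j-m)+m)$ at $x'$, which is a different membership statement. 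Since you do not have, for free, an element of $\Lambda$ that agrees with $T^{\pm m}$ at the specific (measure-zero) points $T^ix'$, you cannot convert a witness at $x$ into a witness at $x'$. So $E_{I,\tau_0}$ is not $\cR$-invariant, positive measure does not bootstrap to full measure via ergodicity, and the reduction "it suffices to prove $\mu(E_{I,\tau_0})>0$" is unjustified. This is exactly the difficulty that the paper's conjugation-by-$T_k$ argument is designed to circumvent: it spreads the positive-measure good set $A$ over $\bigcup_k T^kA$ by producing, for each $k$, a new element $T_k\gamma T_k^{-1}\in\Lambda$ that works on most of $T^kA$, and then sums the small losses.

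Your construction is salvageable without the conjugation trick, but it needs a different upgrade from positive to full measure: Lemma \ref{lem:many disjoint translates} lets you place the Rokhlin base $A$ inside any prescribed positive-measure set $C\subset X$, so the same argument shows $\mu(E_{I,\tau_0}\cap C)>0$ for every $C$ of positive measure; that forces $E_{I,\tau_0}$ to be conull (its complement cannot have positive measure). If you go that way, do it explicitly.

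There is also a secondary quantitative slip. You bound $\mu\bigl(\bigcup_{|i|\le M}T^{-i}\{g\ne g_0\}\bigr)$ by $(2M+1)\,d(g,g_0)$, but $T$ is only non-singular, not measure-preserving, so $\mu(T^{-i}B)$ is not controlled by $\mu(B)$ with a universal constant. This is easily fixed by invoking Lemma \ref{par:nonsingular on ma} finitely many times: given $M$ and the target bound $\mu(A)/(2M+2)$, choose $\delta>0$ so that $\mu(B)<\delta$ implies $\mu(T^{-i}B)<\mu(A)/(2M+2)$ for all $|i|\le M$, and then require $d(g,g_0)<\delta$ rather than $<\mu(A)/(2M+2)$. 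With both corrections your approach becomes a valid, arguably cleaner, alternative to the paper's conjugation argument.
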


\begin{proof}
    Let $n \in \bN$ and $\pi \in \Sym_{n+1}$.
    For $g \in G$ we say that $g_{x,n} = \pi$ 
    if $\alpha(g,x)|_{[0,n]} = \pi$.
    Let $\epsilon > 0$.
   It suffices to show that
    \[
    \mu(\bigcup_{\gamma \in \Lambda} \{x \in X \mid \gamma_{x,n} = \pi\}) > 1-2\epsilon.
    \]
    There exists $g \in G$ with $\mu(\{x \in X \mid g_{x,n} = \pi\}) > 0$.
    By density, there exists $\gamma \in \Lambda$ and $A \subset X$ with $\mu(A) > 0$ and
    $\gamma_{x,n} = \pi$ for all $x \in A$.
    By ergodicity of $T$ we know that $X = \bigcup_{k \in \bN} T^k A$.
    The idea is now the following:
    If a transformation $T_k \in \Lambda$ is very close to $T^k$, then on a large part of $T^kA$ we will have $(T_k \gamma T_k^{-1})_{x,n} = \pi$.
    
    Recall that $d(g,h) = \mu(\{x \in X \mid g(x) \neq h(x)\})$.
    Let $k \in \bN$ and choose $T_k \in \Lambda$ with $d(T_k^{-1} T^{n'},T^{-k+n'}) < \frac{\epsilon}{2^k (n+1)}$ for all $0 \leq n' \leq n$. This implies that
    $\mu(\{x \in X \mid \exists 0 \leq n' \leq n \colon T_k^{-1}(T^{n'}x) \neq T^{-k+n'}x \}) < \frac{\epsilon}{2^k}$.

    Set $\gamma_k := T_k \gamma T_k^{-1}$
    Now
    \begin{align*}
        \{x \in T^k A \mid (\gamma_k)_{x,n} = \pi \} &\supset \{x \in T^k A \mid \forall 0 \leq n' \leq n \colon T_k^{-1}(T^{n'}x) = T^{-k}(T^{n'}x) \} =: A_k
    \end{align*}
    and by the above $\mu(T^kA \setminus A_k) \leq \frac{\epsilon}{2^k}$.
    Summarizing
    \begin{align*}
     \mu(\bigcup_{\gamma \in \Lambda} \{x \in X \mid \gamma_{x,n} = \pi\}) &\geq \mu(\bigcup_{k \geq 0} \{x \in X \mid (\gamma_k)_{x,n} = \pi\})    \\
    &\geq \mu(\bigcup_{k \geq 0} A_k ) \\
    &\geq 1 - \sum_{k \geq 0} \mu(T^kA \setminus A_k) \\
     & > 1 - \sum_{k \geq 0} \frac{\epsilon}{2^k} = 1 - 2\epsilon.
    \end{align*}
\end{proof}

In \cite[Sect. 1.2]{LM:full_groups} Le Ma\^itre gives an example showing that highly transitive subgroups of measurable full goups are not necessarily dense, but the equivalence relation there is not ergodic. He explained to us also an example with an ergodic equivalence relation.

\begin{remark*} Proposition \ref{par:high trans from dense} implies the missing part of Theorem \ref{GCR1}\eqref{itm:nonatomic afht} if $\cR$ is of type $\II_\infty$ or $\III_{\lambda}$. This argument does not apply in the $\II_1$-case, since we do not know whether generic constrained representations are dense in this case. Nevertheless, Proposition \ref{par:high trans from dense} (and hence Theorem \ref{GCR1}\eqref{itm:nonatomic afht}) also holds in the $\II_1$-case by a different argument, see \cite[Proposition 1.19]{EG:generic_irs}. The latter actually applies  for all p.m.p. ergodic equivalence relations, even beyond the hyperfinite case.
\end{remark*}
    
\end{no} The following result was established in \cite[Prop.\ 2.4]{LM18} in case $\II_1$, but the same proof works in Krieger types $\II_\infty$ and $\III_\lambda$.
\begin{proposition*}\label{tnf}
  Let $\Lambda < G$ be a countable, dense group. Then, the action of $\Lambda$ on $X$ is totally-non-free.
\end{proposition*}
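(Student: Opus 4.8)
The plan is to recast ``totally-non-free'' as a statement about a $\sigma$-algebra and then transport it from $G$ to $\Lambda$ using density. Recall from \S\ref{sect:chabauty} that $\Stab_\Lambda(x)=\Stab_\Lambda(x')$ precisely when $x$ and $x'$ lie in the same members of the family $\{\Fix(\gamma):\gamma\in\Lambda\}$; concretely, since $\Bor(\Sub(\Lambda))$ is generated by the sets $\{H:\gamma\in H\}$ and $\Stab^{-1}(\{H:\gamma\in H\})=\Fix(\gamma)$, the sub-$\sigma$-algebra $\cA\coloneqq\sigma\big(\Fix(\gamma):\gamma\in\Lambda\big)\subseteq\cB$ equals $\Stab^{-1}(\Bor(\Sub(\Lambda)))$. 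The first step is the routine reduction: if $\cA=\cB$ modulo $\mu$-null sets, pick a countable point-separating algebra $\{A_n\}$ generating $\cB$; then each $A_n$ agrees mod null with some $\Stab^{-1}(C_n)$, and on the conull complement $X'$ of $\bigcup_n\big(A_n\triangle\Stab^{-1}(C_n)\big)$ the map $\Stab$ is genuinely injective. So it suffices to prove $\cA=\cB$ modulo null sets.

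Next I would observe that $\cA$, viewed inside the complete metric space $\malg(X,\mu)$ via $A\mapsto[A]$, is a \emph{closed} subset: a Cauchy sequence in a sub-$\sigma$-algebra admits a rapidly converging subsequence whose $\limsup$ lies in the algebra and represents the limit. Hence it is enough to exhibit an arbitrary $[A]\in\malg(X,\mu)$ as a limit of $[\Fix(\gamma_k)]$ with $\gamma_k\in\Lambda$. The case $\mu(A)=1$ is trivial (take $\gamma_k=1$), so assume $0\le\mu(A)<1$.

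Now I would produce a full-group element with fixed-point set exactly $A$. Since $\mu$ is non-atomic and $\cR$ is ergodic, the restriction $\cR|_{X\setminus A}$ is a hyperfinite ergodic equivalence relation on a non-atomic space, hence aperiodic (by the example in \S\ref{KriegerOE}); let $S$ be a generator of it. An aperiodic transformation has a null fixed-point set (otherwise that set would be invariant and carry only singleton orbits), so extending $S$ by the identity on $A$ yields $g\in G$ with $\Fix(g)=A$ mod null. Density of $\Lambda$ in $G$ then furnishes, for each $k$, an element $\gamma_k\in\Lambda$ with $\mu(\{x:\gamma_k x\ne g x\})<2^{-k}$; since $\Fix(\gamma_k)\triangle\Fix(g)\subseteq\{x:\gamma_k x\ne g x\}$, we get $[\Fix(\gamma_k)]\to[A]$ in $\malg(X,\mu)$. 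Thus $[A]\in\cA$, proving $\cA=\cB$ mod null, and the proposition follows.

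I expect the only delicate point is the reduction in the first step, namely that one must honestly produce a single conull set on which $\Stab$ is injective; working directly with a ``for almost every pair $(x,x')$'' statement is not enough, because the same-stabilizer relation can be $(\mu\times\mu)$-null while $\Stab$ fails to be essentially injective, so the ``$\cA=\cB$ mod null'' formulation is essential. The remaining ingredients---that every Borel set is the fixed-point set of some element of $G$, and that density passes fixed-point sets to the measure-algebra closure---are straightforward. Finally, nothing above uses the Krieger type beyond non-atomicity of $\mu$ and ergodicity of $\cR$, so the Krieger types $\II_\infty$ and $\III_\lambda$ are covered uniformly; indeed hyperfiniteness is not needed either, since all one uses is the existence of a fixed-point-free element in the full group of $\cR|_{X\setminus A}$.
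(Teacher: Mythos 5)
Your proof is correct. The paper does not supply its own argument here; it cites \cite[Prop.~2.4]{LM18} (stated there in type $\II_1$) and asserts that the same proof works in types $\II_\infty$ and $\III_\lambda$. Your self-contained version follows the expected outline: total non-freeness is equivalent to the sub-$\sigma$-algebra $\sigma(\Fix(\gamma):\gamma\in\Lambda)$ being all of $\cB$ modulo null sets; the image of a sub-$\sigma$-algebra in the measure algebra is closed; and density of $\Lambda$ in $G$ pushes $[\Fix(g)]$ into this closed set for every $g\in G$. Your construction of $g$ with essential fixed-point set $A$ --- an aperiodic element of the full group of $\cR|_{X\setminus A}$, extended by the identity --- cleanly bypasses any Rokhlin-type lemma and uses only ergodicity of $\cR$ and non-atomicity of $\mu$, which makes transparent why the argument is uniform across Krieger types (the point the paper asserts but leaves to the reader). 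The care you take in the reduction step, producing a single conull set on which $\Stab$ is genuinely injective via a countable point-separating algebra rather than relying on an a.e.-in-pairs statement, is a detail well worth the attention you give it.
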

In view of \S \ref{prop:dense} this yields an alternative proof of Proposition \ref{prop-tnf}.
\end{no}



    
    

\section{Proofs of the theorems from the introduction}\label{SecCollect}

We now explain how the results above imply the theorems from the introduction. 

\begin{proof}[Proof of Theorem \ref{GCR1}]
We have established \eqref{itm:nonatomic ec} in \S \ref{ecproof} and \eqref{itm:nonatomic inj}, and \eqref{itm:nonatomic afht} and \eqref{itm:nonatomic tnf} in Subsection \ref{SecFurther}. If $\cR$ has Krieger type different from $\II_1$, then there is no $T$-invariant probability measure in $[\mu]$, and hence it follows from \eqref{itm:nonatomic tnf} that there is no invariant probability measure in the measure class of $\mathrm{Stab}_*\mu$; this proves \eqref{NoInvMeasure}. Finally, \eqref{itm:nonatomic dense} was established in Proposition \ref{prop:dense}. 
\end{proof}
\begin{proof}[Proof of Theorem \ref{ExECRS}] Given $\lambda \in [0,1]$ we pick a system $(X_\lambda, \mu_\lambda, T_\lambda)$ of Krieger type $\III_\lambda$.
We then denote by $\mu^\lambda \in \mathrm{Prob}(\Sub(\Gamma))$ the push-forward of $\mu_\lambda $ under the corresponding stabilizer map, which is $\Gamma$-equivariant.
Note that pushforwards of ergodic measures under equivariant maps remain ergodic; the same holds for nonsingular and conservative actions.
By Theorem \ref{GCR1}(\ref{itm:nonatomic inj}), 
the conjugation action of $a$ on $(\Sub(\Gamma), \mu^{\lambda})$ is of Krieger type $\III_\lambda$. This implies that the measures $\mu^\lambda$ are mutually singular and singular to every $a$-invariant probability measure on $\Sub(\Gamma)$, and in particular to every IRS. By Theorem \ref{GCR1}(\ref{itm:nonatomic ec}) the action of $\Gamma$ on $(\Sub(\Gamma), \mu^{\lambda})$ is then elementwise conservative, and by Theorem \ref{GCR1}(\ref{itm:nonatomic afht}) the action is supported on core-free, co-highly transitive and co-amenable subgroups.
\end{proof}

\begin{proof}[Proof of Theorem \ref{thm:S_infty}]
This has been established in Section \ref{SecIInf}.
\end{proof}

\begin{proof}[Proof of Theorem \ref{thm:unrestricted}]
This theorem is actually easier than Theorem \ref{GCR1} and could be proved along similar lines. However, since we already have established Theorem \ref{GCR1}, we can also use it to quickly deduce Theorem \ref{thm:unrestricted} as follows:

Apply Theorem \ref{GCR1} to the group $\tilde{\Gamma} = \Z * \Gamma$. Via the natural isomorphism $\Hom_T(\tilde{\Gamma},G) \cong \Hom(\Gamma,G)$ almost everything we want follows upon taking a generic $\rho \in \Hom_T(\tilde{\Gamma},G)$ and then ignoring the first generator by restricting $\rho$ to $\Gamma$. 

The only things that do not follow directly are the density statement (\ref{itm:p_dense})
and the high transitivity clause in (\ref{itm:p_prop}). By Proposition \ref{par:high trans from dense} it is enough to establish the former. But the density has nothing to do with group theory. A generic countable subset of any Polish space, and in particular the $\rho$-image of the set of the free generators $H(\rho) = \{\rho(b_1),\rho(b_2),\ldots\} \subset G$, is already dense. Indeed for a
given open $U \subset G$ the set $\Omega(U) = \{\rho \ | \ H(\rho) \cap U\}$ is dense and open in $\Hom(\Gamma,G)$. The desired density follows from Baire's category theorem by intersecting $\bigcap_{U \in \cU} \Omega(U)$, over a countable basis $\cU$ for the topology of $G$.
\end{proof}

 \bibliographystyle{alpha}
 \bibliography{rec}
\end{document}